\DeclarePairedDelimiter\ceil{\lceil}{\rceil}
\DeclarePairedDelimiter\floor{\lfloor}{\rfloor}
\DeclareMathAlphabet{\mathbbold}{U}{bbold}{m}{n}
\def\parens#1{\left(#1\right)}
\newtheorem{theorem}{Theorem}
\newtheorem{lemma}{Lemma}
\newtheorem*{lemma*}{Lemma}
\newtheorem{corollary}{Corollary}
\newtheorem{proposition}{Proposition}
\newtheorem{definition}{Definition}
\newtheorem{claim}{Claim}
\newtheorem*{claim*}{Claim}
\theoremstyle{remark}
\newtheorem*{example*}{Example}
\newcommand{\FF}{{\mathbb{F}}}
\newcommand{\ZZ}{{\mathbb{Z}}}
\newcommand{\RR}{{\mathbb{R}}}
\newcommand{\NN}{{\mathbb{N}}}
\newcommand{\CC}{{\mathbb{C}}}
\newcommand{\one}{{\mathbbold{1}}}
\newcommand{\bone}{{\bar{\one}}}
\newcommand{\hmt}{{\hat{\mu}_{\le \sqrt[3] N}}}
\newcommand{\bmu}{{\bar{\mu}_{\le \sqrt N}}}
\newcommand{\hmu}{{\hat{\mu}_{\le \sqrt N}}}
\newcommand{\kbar}{\bar{k}}
\newcommand{\khat}{\hat{k}}
\newcommand{\pmin}{{p_{\mathrm{min}}}}
\newcommand{\pmax}{{p_{\mathrm{max}}}}
\newcommand{\Conv}{\mathop{\scalebox{1.5}{\raisebox{-0.2ex}{$\ast$}}}}%
  \def\pi{pi}%
  \def\({Math:[}%
  \def\){]}%
\title{Computing $\pi(N)$: An elementary approach in $\tilde{O}(\sqrt{N})$ time}
\author{Dean Hirsch \qquad Ido Kessler \qquad Uri Mendlovic
\\  \href{mailto:primecounting@gmail.com}{primecounting@gmail.com}}
\date{December 2022}
\begin{document}

\maketitle

\begin{abstract}
We present an efficient and elementary algorithm for computing the number of primes up to $N$ in $\tilde{O}(\sqrt N)$ time, improving upon the existing combinatorial methods that require $\tilde{O}(N ^ {2/3})$ time. Our method has a similar time complexity to the analytical approach to prime counting, while avoiding complex analysis and the use of arbitrary precision complex numbers. While the most time-efficient version of our algorithm requires $\tilde{O}(\sqrt N)$ space, we present a continuous space-time trade-off, showing, e.g., how to reduce the space complexity to $\tilde{O}(\sqrt[3]{N})$ while slightly increasing the time complexity to $\tilde{O}(N^{8/15})$. We apply our techniques to improve the state-of-the-art complexity of elementary algorithms for computing other number-theoretic functions, such as the the Mertens function (in $\tilde{O}(\sqrt N)$ time compared to the known $\tilde{O}(N^{0.6})$), summing Euler's totient function, counting square-free numbers and summing primes. Implementation code is provided.
\end{abstract}

\section{Introduction to prime counting}
Our goal is to compute $\pi(N)$, the number of primes not larger than $N$. This problem has a long history. Eratosthenes invented his famous sieve to find all such primes in $\tilde{O}(N)$, that is, linear time up to logarithmic factors. We briefly discuss here the two existing methods to improve this complexity. A comprehensive discussion of both approaches can be found in \cite[Chapter~3.7]{prime_book}.

\subsection{The combinatorial method}
Legendre \cite{legendre} was the first to develop a method to count primes without actually finding them. In the 20-th century his method was used to establish the so-called \emph{combinatorial method} for counting primes \cite{lagarias_combinatorial}. This method requires $\tilde{O}(N ^ {2/3})$ time and $\tilde{O}(\sqrt[3] N)$ space. Later improvements by logarithmic factors \cite{deleglise_combinatorial}\cite{silva_combinatorial} resulted in the current state-of-the-art algorithm for counting primes \cite{old_primecount}.

Even though a complete explanation of the combinatorial methods may be found in the above reference, we give here a quick intuition for this method, as it will be helpful for understanding our method.

The core idea of the combinatorial method is to simulate a sieve algorithm without actually maintaining the state of every single number. Instead, it stores the number of positive integers up to thresholds of the form $\frac{N}{k}$ that are coprime to primes "sieved" so far. We call these numbers \emph{counters}. Simulating the sieve for a single prime can be done in $O(1)$ work per counter using other counters. Further optimizations reduce the number of thresholds and updates, obtaining the mentioned complexity.

\subsection{The analytic method}
An alternative approach to counting primes called the \emph{analytic method} uses complex integration of the logarithm of the Riemann zeta function $\zeta(s)$ to obtain the number of primes up to $N$. \cite{lagarias_analytic} was the first successful algorithm. To the best of our knowledge, later improvements did not change the time complexity of $O(N ^ {1/2 + \epsilon})$ with space complexity $O(N ^ {1/4 + \epsilon})$ \cite{galway-analytic}\cite{platt_analytic}\cite{buthe_analytic}.

We note that even though the analytical approach has the best asymptotic time complexity, the combinatorial approach holds the current record for counting primes. The analytical approach should win for larger $N$, and there are hints that the tipping point is not much farther than the current record $N$. Nevertheless, it is hard to conduct a direct comparison between the approaches because the analytical approach was never analyzed for its exact complexity, to the best of our knowledge.

Another drawback of the analytical approach is the use of arbitrary precision numbers to evaluate a complex integral. These types are very inefficient and incur numerical errors that have to be tracked and bounded.

\subsection{Our contribution}
In this work we present a novel approach for counting primes not larger than $N$. Our approach is elementary in the sense that it does not use the Riemann zeta function or complex analysis. As in the combinatorial approach, our algorithm counts primes in intervals that we call segments. However, unlike existing algorithms, we do not maintain the number of primes in each segment manually. Instead, we do that efficiently using FFT-based convolutions. This improves the time complexity to $\tilde{O}(\sqrt N)$, which is better than $\tilde{O}(N ^ {2/3})$ achieved by the combinatorial approach.

Similar to the analytical method, our method has an error-correction phase, where numbers at a distance of up to $\tilde{O}(\sqrt N)$ from $N$ are analyzed and their erroneous contribution to the result is removed. We believe this resemblance is not a coincidence, and that our approach can be seen as an elementary version of the analytical approach. Unlike the analytical approach, our approach does not require working with high precision complex numbers. Instead, we can run our entire computation using integers. More specifically, we can use finite-field FFT (also known as NTT, number theoretic transform) instead of its complex version. This allows for faster calculation and eliminates the need to track and bound numerical errors. We note that our algorithm indeed avoids using real numbers except for one simple step that is robust to numerical errors.

In \Cref{sec:time-improvement} we improve the time complexity of the basic version of our algorithm, reducing it to  $O\parens{\sqrt N \log N (\log \log N)^{3/2}}$, while using $O\parens{\sqrt{N \log N}}$ space (\Cref{thm:pi-logn}). In \Cref{sec:space-improvement} we improve the space complexity used in our algorithms, introducing a continuous trade-off between the space and time complexities (\Cref{thm:pi-space-time-tradeoff}), achieving, for instance, space complexity of $\tilde{O}\parens{N^{1/3}}$ at the expense of increasing the time complexity to $\tilde{O}\parens{N^{8/15}}$. An extreme case of $O\parens{N^{2/9+\epsilon}}$ space is also introduced, with time complexity of $O\parens{N^{5/9+\epsilon}}$, showing that our algorithm asymptotically dominates the combinatorial method in both time and space requirements.

In \Cref{sec:other-functions} we extend our method to other number-theoretical functions. For example, we show how to compute:

\begin{itemize}
    \item \Cref{thm:mertens-logn}: The Mertens function (sum over the Möbius function) in time
    \[ O\parens{\sqrt N \log N \sqrt{\log \log N}}. \]
    \item \Cref{cor:sum-prime-powers}: The sum of primes up to $N$ in time $\tilde{O}\parens{\sqrt N}$.
    \item \Cref{cor:count-residue-classes}: The number of primes $p\equiv r \pmod{m}$ up to $N$ for any $r, m$ with $m=\tilde{O}(\sqrt N)$, in time $\tilde{O}(\sqrt N)$.
    \item \Cref{thm:sqfree-sum}: The number of square-free numbers up to $N$ in time $\tilde{O}\parens{\sqrt[3]{N}}$.
    \item \Cref{thm:euler-sum}: The sum of Euler's totient function up to $N$ in time $\tilde{O}\parens{\sqrt N}$.
\end{itemize} 

For all these problems, our algorithm improves the time complexity of the state-of-the-art elementary algorithms.

An implementation of the presented algorithm is available at \cite{our_primecounting}.

\subsection{Notations and preliminaries}

We denote the number of prime numbers up to (and including) $n$ by $\pi(n)$.

Much of this work is concerned with functions from the set $\NN$ of natural numbers ($0\not\in \NN$) to some ring $R$, usually $\ZZ$.
Convolution of functions over $\NN$ is understood as Dirichlet convolution: given two functions $f,g:\NN \to R$, their Dirichlet convolution $f\Conv g:\NN \to R$ is defined by
  \[
    (f\Conv g)(n) = \sum_{d | n} f(d) \cdot g(n/d)
  \]

In \Cref{sec:segmentation} we transform any function $f$ over $\NN$ into an array $\bar{f}$ of values starting at index $n=0$.
Convolution of such arrays is understood as regular convolution:
  \[
    (\bar{f}\Conv \bar{g})[n] = \sum_{m=0}^n \bar{f}[m] \cdot \bar{g}[n-m]
  \]

We denote by $\one: \NN \to R$ the constant function $\one(n)=1$.

We denote by $\omega(n)$ the number of distinct primes dividing $n$, by $\tau(n)$ the number of divisors of $n$, by $\mu(n)$ the Möbius function, by $\pmax(n)$ the largest prime dividing $n$, and we set $\pmax(1)=1$.

We use the notation $\tilde{O}(f(n))$ to hide factors of $\log n$. That is, $\tilde{O}(f(n))$ is any function that is bounded above by $O(f(n)\log^c n)$ for some constant $c$.

We assume a word-RAM model of computation with $w$-bit words for $w = \Theta(\log N)$.

\newpage
\setcounter{tocdepth}{2}
\tableofcontents

\section{Basic Algorithm}
In this section we present the essentials of our algorithm. This already achieves $\tilde{O}(\sqrt N)$ time and space complexity. In \Cref{sec:time-improvement} we improve the time complexity by logarithmic factors and in \Cref{sec:space-improvement} we introduce a space-time trade-off.

Our algorithm consists of two phases. The first phase approximately counts primes up to $N$ using convolutions. For this purpose we introduce the concept of smooth Möbius function and log-scale segmentation. The second phase computes the error term in this approximation, recovering the exact value of $\pi(N)$.

\subsection{The smooth Möbius function}\label{sec:smooth-mobius}
As in the elementary approach we start with the set of all numbers up to $N$, represented by $\one$, and then remove from it numbers divisible by primes up to $\sqrt{N}$.
The elementary approach iterates over primes (up to $\sqrt{N}$), and for each prime $p$ removes the numbers divisible by $p$. What remains is the set of numbers up to $N$ not divisible by any prime up to $\sqrt N$, that is, the primes between $\sqrt N$ and $N$, as well as the number $1$.

Each iteration of removing multiples of a prime $p$ can be viewed as Dirichlet convolution with the function:
  \[
    \mu_p(n) = \begin{cases}
        1, & n = 1\\
        -1, & n = p\\
        0, & \text{otherwise}
        \end{cases}
  \]

Indeed, $(\mu_p \Conv  \one)(n)=\sum_{d|n} \mu_p(d) \one(n/d)$, which is $\one(n)=1$ if $p\nmid n$ and $\one(n)-\one(n/p)=0$ if $p\mid n$. This logic continues to apply when we iteratively convolve with $\mu_p$ for different primes $p$.

In our approach we would like to remove numbers divisible by any prime up to $\sqrt{N}$ using a single convolution. 
We do this by computing the convolution of all $\mu_p$ for primes up to $\sqrt{N}$.
The resulting function is equal to the Möbius function $\mu(n)$ for numbers that are $\sqrt N$-smooth and $f(n)=0$ otherwise.
We denote this function by $\mu_{\le \sqrt N}$:
  \[
    \mu_{\le \sqrt N}(n) = \begin{cases}
        (-1)^{\omega(n)}, & n\text{ is square-free and } \pmax(n) \le \sqrt N \\
        0, & \text{otherwise}
        \end{cases}
  \]
Here $\omega(n)$ is the number of distinct prime divisors of $n$, and $\pmax(n)$ is the largest prime dividing $n$ (and we set $\pmax(1)=1$).

Convolving $\mu_{\le \sqrt N}$ and $\one$, we get the set of all numbers satisfying $\pmin(n) > \sqrt N$.
Since we are interested only in numbers not larger than $N$, we may discard (or equivalently, set to 0) values of inputs greater than $N$ in the computed functions $\mu_{\le \sqrt N}$ and $\one$.
The resulting function corresponds to the set of prime numbers in the interval $(\sqrt{N}, N]$ together with the number $1$, since all other numbers have a divisor $\le \sqrt N$ and thus have been removed by the convolution with $\mu_{\le \sqrt N}(n)$. This result is summarized in the following lemma:

\begin{lemma}\label{lem:pi-relation}
\begin{equation*}
\sum_{n=1}^N (\one \Conv \mu_{\le \sqrt N})(n) = \pi(N) - \pi(\sqrt N) + 1
\end{equation*}
\end{lemma}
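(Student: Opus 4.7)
The plan is to establish the pointwise identity
\[
(\one \Conv \mu_{\le \sqrt N})(n) \;=\; \begin{cases} 1 & \pmin(n) > \sqrt N, \\ 0 & \text{otherwise,}\end{cases}
\]
with the convention that $n=1$ satisfies the condition vacuously, and then sum over $1\le n\le N$ by a short case analysis.

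For the pointwise identity I would expand the Dirichlet convolution and fold in the definition of $\mu_{\le \sqrt N}$:
\[
(\one \Conv \mu_{\le \sqrt N})(n) \;=\; \sum_{d\mid n} \mu_{\le \sqrt N}(d) \;=\; \sum_{\substack{d\mid n \\ d \text{ is } \sqrt N\text{-smooth}}} \mu(d),
\]
since $\mu_{\le \sqrt N}$ vanishes off the square-free $\sqrt N$-smooth integers and agrees with $\mu$ on them. Factor $n = s\cdot r$ with $s$ the largest $\sqrt N$-smooth divisor of $n$, so $r$ is coprime to every prime $\le \sqrt N$. By unique factorization every $\sqrt N$-smooth divisor of $n$ divides $s$, so the sum collapses to $\sum_{d\mid s}\mu(d)$, which equals $1$ if $s=1$ and $0$ otherwise by the classical Möbius identity. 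Thus the convolution is $1$ precisely when $n$ has no prime factor $\le \sqrt N$.

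Summing this indicator over $1\le n\le N$ counts three kinds of integers: (i) $n=1$; (ii) primes $p$ with $\sqrt N < p \le N$, contributing $\pi(N)-\pi(\sqrt N)$; (iii) composite $n\le N$ all of whose prime factors exceed $\sqrt N$. Class (iii) is empty, because such a composite would satisfy $n \ge pq > \sqrt N \cdot \sqrt N = N$ for two primes $p,q > \sqrt N$. Adding the contributions yields $1 + \pi(N) - \pi(\sqrt N)$, exactly as claimed.

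The only step requiring any real care is the smooth/rough decomposition---one must check that every $\sqrt N$-smooth divisor of $n$ lies inside its smooth part $s$, which is immediate from unique factorization. An alternative route, more in keeping with the paper's framing, would be to use the already-noted fact that $\mu_{\le \sqrt N}$ is the Dirichlet convolution of the $\mu_p$ over primes $p\le\sqrt N$, and induct on the number of primes involved; each convolution with $\mu_p$ zeros out the multiples of $p$ in the running indicator. Both routes are short, and no deeper tools are needed.
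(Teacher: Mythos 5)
Your proof is correct and follows essentially the same route as the paper: establish that $\one\Conv\mu_{\le\sqrt N}$ is the indicator of integers with no prime factor $\le\sqrt N$, then observe that below $N$ these are exactly $1$ and the primes in $(\sqrt N, N]$. The paper leaves the pointwise identity at the level of the remark that "all other numbers have a divisor $\le\sqrt N$ and thus have been removed by the convolution," while you spell it out via the smooth/rough factorization and the classical Möbius sum $\sum_{d\mid s}\mu(d)$; this is the same argument made explicit, not a different one.
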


The $+1$ in \Cref{lem:pi-relation} comes from the fact that in addition to primes between $\sqrt N$ and $N$, the left-hand side also counts the number 1.

In our algorithm we compute the left-hand side of \Cref{lem:pi-relation} in $\tilde{O}(\sqrt N)$ time, compute $\pi(\sqrt N)$ directly in $\tilde{O}(\sqrt N)$ time, and combine them to get the value of $\pi(N)$.

\subsection{Segmentation}\label{sec:segmentation}
\subsubsection{Exponential segmentation and convolution}
The key idea of our method is to manipulate sets of numbers using fast convolution.
As in the classic elementary approach, we simulate sieving using Dirichlet convolution that filters numbers divisible by small primes.
As in existing methods we reduce the number of convolution cells by summing up the function over a range of values.
However, while past methods used thresholds of the form $N/k$, we use a geometric progression as thresholds: $2^{k\Delta}$ where $\Delta$ is a real parameter that controls the segmentation precision and $k=0,1,2,...$ is a running index.
The above segmentation is useful because it approximately translates Dirichlet convolution into sequence convolution that can be carried out using FFT. We now explain how this can be done.

Simply put, we map all the integers in the interval $[2^{k\Delta}, 2^{(k+1)\Delta})$ to the $k$-th cell in an array. 
This way any function $f$ on $\NN$ gives rise to an infinite array where the $k$-th cell is the sum of the function over integers in the corresponding interval.
As the logarithmic precision $\Delta$ gets smaller, the resulting array retains more information of the original function.
We denote the array that corresponds to $f$ by $\bar{f}$, called the \emph{segmentation of $f$}. So:
  \[
    \bar{f}[k] = \sum_{2^{k\Delta} \le n < 2^{(k+1)\Delta}} f(n)
  \]

We further denote by $\kbar (n)$ the index in the array to which $f(n)$ is mapped, that is:
  \[
    \kbar (n) = \floor*{\frac{\log_2 n}{\Delta}}
  \]

In our algorithms, larger values of $\Delta$ allow for faster computations of convolutions, by reducing the lengths of the relevant arrays, at the cost of introducing segmentation errors. We introduce an error correction procedure (discussed in \Cref{sec:error-correction}), necessary for obtaining an exact result, that will take more time as $\Delta$ increases. Thus, ultimately $\Delta$ balances the convolutions time with the error correction time. The optimal value for $\Delta$ will be a function of $N$, and a good value to keep in mind for the basic version of the algorithm is $\Delta\approx \frac{1}{\sqrt N}$.

\subsubsection{Example: Counting odd numbers}
The function $\one$ can be seen as the indicator function of $\NN$. In order to construct the indicator function of the odd numbers we can use the Dirichlet convolution with the function:
  \[
    \mu_2(n) = \begin{cases}
        1, & n = 1\\
        -1, & n = 2\\
        0, & \text{otherwise}
        \end{cases}
  \]
Letting $f = \one\Conv \mu_2$, we get $f(n) = 1$ if $n$ is odd and zero otherwise. That is, $f$ is the indicator function of the odd numbers.

To compute this convolution efficiently, we use exponential segmentation where the thresholds are power of two, so the resolution parameter is $\Delta = 1$ and we get $\bone[k] = 2^k$ while
  \[
    \bar{\mu}_2[k] = \begin{cases}
        1, & k = 0\\
        -1, & k = 1\\
        0, & \text{otherwise}
        \end{cases}
  \]
By convolving the array $\bone$ and $\bar{\mu}_2$ we get that $\bar{f}[k] = 2^{k-1}$ everywhere except for $\bar{f}[0] = 1$, which is indeed the number of odd numbers in the corresponding intervals.

\subsubsection{Segmentation errors}
The result of the above example was accurate. This was true only due to choosing $\Delta$ such that all the nonzero values of $\mu_2$ fall exclusively on segment boundaries. In the general case we cannot satisfy this requirement. Values in the array convolution might miss their correct target cell. For example choosing $\Delta = 1$, and trying to convolve two functions $f$ and $g$, the value of $f(3) \cdot g(3)$ should be accumulated into $\kbar (9) = \floor*{\log_2 9} = 3$. In practice $f(3)$ was represented in $\bar{f}[1]$ since $\kbar (3) = \floor*{\log_2 3} = 1$ and similarly for $g(3)$, so $f(3) \cdot g(3)$ is wrongly accumulated into $\khat  = 2 \cdot \floor*{\log_2 3} = 2$.

We define a new function $\khat (n)$ as the sum of the indices that correspond to the prime factorization of $n$.

\begin{definition}\label{def:khat}
For $n=\prod_i p_i^{e_i}$, we define $\khat(n)$ as
  \begin{equation}
    \khat(n) = \sum_i e_i \kbar(p_i) = \sum_i e_i \floor*{\frac{\log_2 p_i}{\Delta}}
  \end{equation}
\end{definition}

For prime numbers we have $\khat (p) = \kbar (p)$ but in general $\khat (n)$ may be unequal to $\kbar (n)$. In the above example choosing $n=9$ and $\Delta = 1$ gave:
  \[
    3 = \kbar (9) \neq \khat (9) = 2
  \]

We now state a few simple properties of $\kbar$ and $\khat$.

\begin{claim}[Properties of $\kbar$, $\khat$]\label{lem:kbar-khat}
For any $n_1,n_2\in\NN$, the following hold:
\begin{equation}\label{eq:kbar-subadditivity}
\kbar(n_1n_2) - 1 \le \kbar(n_1)+\kbar(n_2) \le \kbar(n_1n_2)
\end{equation}
\begin{equation}\label{eq:khat-additivity}
\khat(n_1n_2) = \khat(n_1) + \khat(n_2)
\end{equation}
Furthermore, for any $n\in\NN$, it holds that
\begin{equation}\label{eq:khat-vs-kbar}
\kbar(n) - \log_2 n \le \khat(n) \le \kbar(n)
\end{equation}
\end{claim}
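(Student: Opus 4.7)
The plan is to reduce each of the three claims to the elementary floor identity $\lfloor a \rfloor + \lfloor b \rfloor \le \lfloor a+b \rfloor \le \lfloor a \rfloor + \lfloor b \rfloor + 1$ combined with the multiplicative-to-additive property $\log_2(n_1 n_2) = \log_2 n_1 + \log_2 n_2$. I would dispatch the equalities/inequalities in order of increasing subtlety.

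Equation \eqref{eq:khat-additivity} is immediate from the definition: writing $n_1 = \prod_i p_i^{e_i}$ and $n_2 = \prod_i p_i^{f_i}$ over a common prime set (allowing zero exponents), $n_1 n_2 = \prod_i p_i^{e_i + f_i}$ and hence $\khat(n_1 n_2) = \sum_i (e_i+f_i)\kbar(p_i) = \khat(n_1)+\khat(n_2)$. Equation \eqref{eq:kbar-subadditivity} is equally direct: substituting $a = \log_2 n_1 / \Delta$ and $b = \log_2 n_2 / \Delta$ into the floor identity above and rearranging recovers the claim verbatim.

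For \eqref{eq:khat-vs-kbar}, write $n = \prod_i p_i^{e_i}$. The upper bound $\khat(n) \le \kbar(n)$ follows because $\khat(n) = \sum_i e_i \lfloor \log_2 p_i / \Delta \rfloor \le \sum_i e_i \log_2 p_i / \Delta = \log_2 n / \Delta$, and since $\khat(n)$ is an integer this tightens to $\khat(n) \le \lfloor \log_2 n / \Delta \rfloor = \kbar(n)$. For the lower bound, the termwise estimate $\lfloor x \rfloor \ge x-1$ gives $\khat(n) \ge \log_2 n / \Delta - \Omega(n)$, where $\Omega(n) = \sum_i e_i$ denotes the number of prime factors of $n$ counted with multiplicity. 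Combined with $\kbar(n) \le \log_2 n / \Delta$, this yields $\kbar(n) - \khat(n) \le \Omega(n)$; since $n \ge 2^{\Omega(n)}$ we have $\Omega(n) \le \log_2 n$, which gives the stated inequality. No step here presents a real obstacle; the whole claim is bookkeeping with floors, and the bound in \eqref{eq:khat-vs-kbar} is in fact a slack form of the sharper $\kbar(n) - \khat(n) \le \Omega(n)$.
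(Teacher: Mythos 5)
Your proof is correct and takes essentially the same route as the paper: all three parts reduce to the floor identity $\lfloor x\rfloor + \lfloor y\rfloor \le \lfloor x+y\rfloor \le \lfloor x\rfloor+\lfloor y\rfloor+1$ and the observation that $n$ has at most $\log_2 n$ prime factors with multiplicity. The only cosmetic difference is in \eqref{eq:khat-vs-kbar}, where you compare both $\kbar(n)$ and $\khat(n)$ directly to $\log_2 n/\Delta$ rather than iterating \eqref{eq:kbar-subadditivity} across the prime factorization; your side remark that the inequality holds in the sharper form $\kbar(n)-\khat(n)\le\Omega(n)$ is a correct and pleasant refinement.
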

\begin{proof}
\Cref{eq:kbar-subadditivity} follows from the fact that $\floor*{x+y} - 1 \le \floor*{x} + \floor*{y} \le \floor*{x+y}$ for any $x,y\in\RR$.
\Cref{eq:khat-additivity} follows from the definition of $\khat$. \Cref{eq:khat-vs-kbar} then follows by noting that a number $n$ has at most $\log_2 n$ prime factors, including multiplicity.
\end{proof}

\subsection{Computing the smooth Möbius function}
\subsubsection{The approximated smooth Möbius function}
In order to compute the convolution $\bone \Conv \bmu$, we need $\bmu$. Computing it directly from $\mu_{\le \sqrt N}$ requires iterating over the $\Theta(N)$ nonzero values of $\mu_{\le \sqrt N}$. Instead, we approximate it as $\hmu$, defined as 
\[ \hmu = \bar{\mu}_2 \Conv \bar{\mu}_3 \Conv \ldots \Conv \bar{\mu}_p \]
convolving $\bar{\mu}_p$ over all primes $p\le \sqrt N$. The notation $\hmu$ is chosen to distinguish it from $\bmu$, that is reserved for the segmentation of the function $\mu_{\le \sqrt N}$. We call $\hmu$ the approximated smooth Möbius function.

We further note that since we wish to count primes not larger than $N$, the convolution $\bone \Conv \bmu$ is only of interest for segments not larger than $\kbar (N) = \floor*{\frac{\log_2 N}{\Delta}}$. Therefore the computation is restricted to convolutions of arrays with $O\parens{\frac{\log N}{\Delta}}$ cells.

Naively, computing $\hmu$ can be done with $\tilde{O}(\sqrt N)$ convolutions on arrays of size $\frac{\log N}{\Delta}$, totalling $\tilde{O}\parens{\frac{\sqrt N}{\Delta}}$ time. This is too much, because we would like $\Delta$ to be very small (as will become clearer when discussing the error-correcting phase in \Cref{sec:error-correction}). Instead, we introduce here an alternative derivation that allows for faster computation of $\hmu$.

Let $\delta_{k}$ be an array of zeros everywhere except for a single $1$ at the $k$-th index. Using this notation $\bar{\mu}_p = \delta_{0} - \delta_{\kbar (p)}$ where $\kbar (p)$ is the array index that corresponds to $p$ and
\begin{equation}\label{eq:hmu-convolution}
\hmu = \Conv_{p \le \sqrt N} \bar{\mu}_{p} = \Conv_{p \le \sqrt N} \parens{ \delta_{0} - \delta_{\kbar (p)} }
\end{equation}
where $\Conv_{p \le \sqrt N}$ should be interpreted as the convolution of all the arrays that correspond to primes not larger than $\sqrt N$. We next use associativity of convolution and the fact that $\delta_{n_0} \Conv \delta_{n_1} = \delta_{n_0+n_1}$ to rewrite the convolutions as a sum of delta functions over ordered $r$-tuples of primes:
  \[
    \hmu = \sum_r (-1)^r \sum_{p_1 < p_2 < ... < p_r \le \sqrt N} \delta_{\kbar (p_1) + ... + \kbar (p_r)}
  \]

Using this notation we see that indeed:
  \[
    \hmu = \sum_{n} \mu_{\le \sqrt N}(n) \delta_{\khat (n)}
  \]
which approximates the original:
  \[
    \bmu = \sum_{n} \mu_{\le \sqrt N}(n) \delta_{\kbar (n)}.
  \]

We summarize the above in the following form, that will be useful later:

\begin{claim}\label{thm:hmu-val}
\begin{equation}
\hmu[k] = \sum_{n:\,\khat(n)=k} \mu_{\le \sqrt N}(n)
\end{equation}
\end{claim}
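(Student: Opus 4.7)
The plan is essentially to formalize the derivation sketched just above the claim in the text. Starting from \Cref{eq:hmu-convolution}, I would expand the iterated convolution over primes $p\le\sqrt N$ by distributing: each factor $\delta_{0}-\delta_{\kbar(p)}$ contributes either $+\delta_0$ or $-\delta_{\kbar(p)}$, and a choice of contributions corresponds to a subset $S$ of the primes $\le\sqrt N$. Using associativity of convolution together with $\delta_a\Conv\delta_b=\delta_{a+b}$, the term coming from $S$ becomes
\[
(-1)^{|S|}\,\delta_{\sum_{p\in S}\kbar(p)}.
\]

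Next I would set up the obvious bijection between such subsets $S$ and square-free $\sqrt N$-smooth positive integers $n=\prod_{p\in S}p$. Under this bijection, $(-1)^{|S|}=(-1)^{\omega(n)}=\mu(n)=\mu_{\le\sqrt N}(n)$, and by \Cref{eq:khat-additivity} together with $\khat(p)=\kbar(p)$ for primes, one has $\sum_{p\in S}\kbar(p)=\khat(n)$. For any $n$ that is not square-free or not $\sqrt N$-smooth there is no corresponding subset $S$, but for such $n$ we also have $\mu_{\le\sqrt N}(n)=0$, so we may freely extend the sum to all $n\in\NN$. Combining everything,
\[
\hmu=\sum_{S\subseteq\{p\le\sqrt N\}}(-1)^{|S|}\delta_{\sum_{p\in S}\kbar(p)}=\sum_{n\in\NN}\mu_{\le\sqrt N}(n)\,\delta_{\khat(n)}.
\]

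Finally, reading off the value at index $k$ of both sides gives
\[
\hmu[k]=\sum_{n:\,\khat(n)=k}\mu_{\le\sqrt N}(n),
\]
which is the desired identity. There is no real obstacle here: the argument is purely formal manipulation of convolutions of delta arrays. The only minor care needed is to justify that the identification of subsets of primes with square-free smooth integers exactly matches the sign $\mu_{\le\sqrt N}$ and the index $\khat$, which is immediate from the definitions and \Cref{eq:khat-additivity}.
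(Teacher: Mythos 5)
Your proposal is correct and follows essentially the same route as the paper: both expand $\hmu=\Conv_{p\le\sqrt N}(\delta_0-\delta_{\kbar(p)})$ distributively, use $\delta_a\Conv\delta_b=\delta_{a+b}$, and identify the resulting terms with square-free $\sqrt N$-smooth integers via $\khat$ (the paper indexes the terms by ordered $r$-tuples of primes, you by subsets, which is the same bijection). No gaps.
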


Intuitively, $\hmu$ is equal to $\bmu$ in a world where each prime $p$ is replaced by its approximation $2^{\kbar (p) \Delta}$, which may not be an integer.

For example, choosing $\Delta = 1$ we get $\kbar (2) = \kbar (3) = 1$ and  $\kbar (5) = 2$, so the primes $2, 3$ and $5$ corresponds to $2^1, 2^1$ and $2^2$ respectively. Note that this map is not one-to-one, we now have two "primes" with the value of $2$. Further note that in general we result in non-integer values rather than whole numbers.

In this alternative world natural numbers are replaced by multiplying their modified prime factors. The number $42 = 2 \times 3 \times 7$ is replaced by $2^1 \times 2^1 \times 2^2 = 2^4 = 16$. We ended up representing $42$ as $16$, even though $42$ falls between $32$ and $64$. This is an example of segmentation error.

Taking into account the fact that many numbers may be mapped to the same segment, we must map sets of whole numbers to multi-sets of segmented numbers.
The advantage of this alternative world is that we can use fast convolution of arrays in order to convolve sets or functions of numbers.

\subsubsection{Applying Newton identities}\label{sec:newton-identities}
We rewrite $\hmu$ as
\begin{equation}\label{eq:mu-hat-newton}
\hmu = \sum_r (-1)^r C_r
\end{equation}
where $C_r$ is an array such that $C_r[k]$ counts the numbers $n$ whose $\khat (n)$ falls into segment $k$, that are the product of exactly $r$ \emph{different} primes not larger than $\sqrt N$. Formally, we define $C_r[k]$ as
\[
\big|\{n\in \NN: \khat (n)=k\text{ and }\omega(n)=r\text{ and }\pmax(n)\le \sqrt N\text{ and $n$ is square-free}\}\big|
\]

This is the "indicator" function for the set of numbers $n$ for which $\omega(n)=r$ and $\pmax(n)\le \sqrt N$, in our alternate number system where there can be multiplicities. We also denote this by $C_r = \{\{\khat (p_1\cdot\ldots\cdot p_r)\}\}$ where it is implied that we compute the multi-set of values of the form $\khat (p_1\cdot\ldots\cdot p_r)$ where $p_1 < p_2 < ... < p_r$ are prime numbers not larger than $\sqrt N$. In general we denote by $\{\{x_i\}\}$ the array whose $k$-th index is the number of occurrences of $k$ in the implied sequence $x_i$ (hence, this is just a short-hand for a sum of indicators). The relevant sequence is understood from the context.

Our approach is to compute the arrays $C_r$ efficiently and then to construct $\hmu$ from them. Note that \Cref{eq:mu-hat-newton} includes all multiplicities $r$, but we are only interested in numbers not larger than $N$, so values with $r > \log_2 N$ can be discarded. In other words, we sum up for $r$ from $0$ to $r_{\text{max}} = \floor{\log_2 N}$.

We now present a method of computing the $C_r$ arrays using $O\parens{\log^2 N}$ convolutions, improving the naive method of computing $\hmu$ by convolving individual primes which requires $\tilde{O}\parens{\sqrt N}$ convolutions. Each convolution is of arrays of size $\frac{\log N}{\Delta}$ requiring $\frac{\log N}{\Delta} \log\parens{\frac{\log N}{\Delta}}$ time using FFT. Since $1/\Delta$ will end up being $N^{O(1)}$, this is $O\parens{\frac{\log^2 N}{\Delta}}$ time per convolution. Hence $\hmu$ can be computed in $O\parens{\frac{\log^4 N}{\Delta}}$ time.

\begin{lemma}\label{lem:hmu-log4}
$\hmu$, and hence also $\bone\Conv\hmu$, can be computed in time complexity $O\parens{\sqrt N \log \log N + \frac{\log^4 N}{\Delta}}$.
\end{lemma}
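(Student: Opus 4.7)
My plan is to view the arrays $C_r$ as the $r$-th elementary symmetric polynomials in the ``variables'' $x_p = \delta_{\kbar(p)}$, with multiplication interpreted as array convolution, and to compute them via Newton's identities from the associated power-sum arrays $P_j = \sum_{p \le \sqrt N} \delta_{j\,\kbar(p)}$. This replaces the $\tilde{O}(\sqrt N)$ convolutions of the naive approach (one per prime) by $O(\log^2 N)$ convolutions, which is where the advertised speedup comes from.

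First I would enumerate the primes up to $\sqrt N$ with a sieve of Eratosthenes in $O(\sqrt N \log \log N)$ time, which accounts for the first summand in the stated bound. Using these primes I would then build each $P_j$ for $j = 1,\ldots,r_{\max}$ with $r_{\max} = \floor{\log_2 N}$, by a single pass over the primes, truncating to indices $\le \kbar(N) = O(\log N/\Delta)$ (higher indices can only produce $C_r[k]$ values that we will eventually discard because they correspond to numbers larger than $N$). The total cost of building all the $P_j$'s is $O(\pi(\sqrt N)\log N)$, which is absorbed by the sieving cost.

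Next I would apply the Newton recursion in the commutative convolution ring,
\[
  r \cdot C_r \;=\; \sum_{j=1}^{r} (-1)^{j-1}\, P_j \Conv C_{r-j}, \qquad C_0 = \delta_0,
\]
iterating $r$ from $1$ to $r_{\max}$ (larger $r$ contribute nothing, since a number $\le N$ has at most $\log_2 N$ prime factors). Iteration $r$ performs $r$ FFT convolutions on arrays of length $O(\log N/\Delta)$, each taking $O(\log^2 N/\Delta)$ time, so the total convolution cost is $\sum_{r=1}^{r_{\max}} r \cdot O(\log^2 N/\Delta) = O(\log^4 N/\Delta)$, matching the second summand. I would then form $\hmu = \sum_{r=0}^{r_{\max}} (-1)^r C_r$ in $O(\log^2 N/\Delta)$ time, build $\bone$ directly from $\bone[k] = \floor{2^{(k+1)\Delta}} - \floor{2^{k\Delta}}$, and obtain $\bone \Conv \hmu$ with one final convolution that fits inside the same budget.

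The only non-routine point I expect is the division by $r$ in the Newton step: I need the right-hand side above to be divisible by $r$ entry-wise so that $C_r$ remains an integer array that can be stored without passing to $\QQ$. This is automatic, because $C_r[k]$ is by construction the nonnegative integer count of squarefree products of exactly $r$ primes $\le \sqrt N$ with $\khat = k$ (as in \Cref{thm:hmu-val}), so the division is exact and can be carried out coordinatewise in $\ZZ$. Once this is observed, the time accounting above is routine and yields the claimed bound.
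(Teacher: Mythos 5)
Your proposal is correct and takes essentially the same route as the paper: your power-sum arrays $P_j$ are exactly the paper's $E_j = \{\{j\,\kbar(p)\}\}$, and the recursion $r\,C_r = \sum_{j=1}^r (-1)^{j-1} P_j \Conv C_{r-j}$ is precisely \Cref{eq:newton-like}, so the cost accounting matches the paper's. Your explicit remark that the division by $r$ is entrywise exact (because $C_r[k]$ is an integer count) is a small clarification the paper leaves implicit, but it is correct and worth noting.
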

\begin{proof}
We define arrays $E_r$ similarly to $C_r$, that constitute only the $r$-th powers of primes. We denote this by $E_r = \{\{\khat (p^r)\}\} = \{\{r \khat (p)\}\}$.

We begin by finding all primes up to $\sqrt N$ in $O(\sqrt N \log \log N)$ time and constructing the array $C_1=E_1$ that sums their indicator function. To compute $C_2$, we then note that $C_1\Conv C_1$ (convolving as arrays but discarding cells that corresponds to numbers larger than $N$) almost computes $C_2$, with the caveats being that squares of primes are also counted, and that products of different primes $p_1\cdot p_2$ are counted exactly twice. Hence, $2C_2 = C_1\Conv C_1 - E_2$. To then compute $C_3$, we start by computing $C_2\Conv C_1$, counting each intended $p_1 \cdot p_2 \cdot p_3$ exactly 3 times, but this also counts numbers of the form $p_1 \cdot p_2^2$. We address this by removing $E_2 \Conv  C_1$ from the result. However, we have then removed numbers of the form $p^3$ too many times, and we counter this by adding $E_3$ to the result. In total, we obtain $3C_3 = C_2\Conv C_1 - C_1\Conv E_2 + E_3$. These resemble the famous Newton identities, applied for convolutions. Recalling that $C_1=E_1$ and $C_0 = \{\{\khat (1)\}\} = \{\{0\}\} = \delta_0 $, we obtain

\begin{equation}\label{eq:newton-like}
r C_r = \sum_{r'=1}^r (-1)^{r'-1} C_{r-r'} \Conv  E_{r'}
\end{equation}

The arrays $E_r$ can be computed efficiently from $E_1$ by copying values from index $i$ in $E_1$ to index $r\cdot i$ in $E_r$ and discarding numbers larger than $N$.
Computation of $C_r$ by \Cref{eq:newton-like} requires $r-1$ convolutions, assuming $C_{<r}$ have been computed. We can therefore compute $\hmu$ by \Cref{eq:mu-hat-newton} using $O\parens{\log^2 N}$ convolutions, as promised.
\end{proof}

This section introduced the essential ideas for computing $\hmu$. A better algorithm is described in \Cref{sec:better-newton}.

\subsection{Error correction}\label{sec:error-correction}
As mentioned, convolving $\bone$ with $\hmu$ can produce erroneous values in each segment due to rounding errors. We now explain how to mitigate this problem.

The problem, put differently, is that $\overline{f\Conv g} \neq \bar{f} \Conv \bar{g}$ (note that on each side we use a different type of convolution -- on the left-hand side we use Dirichlet convolution, and on the right-hand side we use a simple convolution of two arrays).

While this problem potentially affects all cells in the resulting array $\bar{f} \Conv \bar{g}$, we are not actually interested in their raw value, but rather in their sum: in our setting, we compute $\one \Conv \mu_{\le \sqrt N}$ up to $N$ in order to discard numbers divisible by primes $\le \sqrt N$, and the sum of the convolution will be 1 plus the number of primes between $\sqrt N$ and $N$ (\Cref{lem:pi-relation}).

We now describe the error in our approximation. Using \Cref{thm:hmu-val}, the value of segment $k$ in $\bone\Conv \hmu$ is
\begin{align*}
(\bone\Conv \hmu)[k] &= \sum_{k_1+k_2=k} \bone[k_1]\cdot \hmu[k_2] \\
&= \sum_{k_1+k_2=k} \parens{\sum_{d_1:\kbar (d_1)=k_1} \one(d_1)} \cdot \parens{\sum_{d_2:\khat (d_2)=k_2} \mu_{\le \sqrt N}(d_2)} \\
&= \sum_{d_1,d_2: \kbar (d_1)+\khat (d_2)=k} \one(d_1)\cdot \mu_{\le \sqrt N}(d_2)
\end{align*}

Using $\kbar(d_1)+\khat(d_2) \le \kbar(d_1d_2)$ (which readily follows from \Cref{lem:kbar-khat}), we can split the above sum into two parts:

\begin{align*}
\sum_{k=0}^{\kbar (N)} (&\bone\Conv \hmu)[k] \\
&= \sum_{\kbar (d_1)+\khat (d_2)\le \kbar(N)} \one(d_1)\cdot \mu_{\le \sqrt N}(d_2) \\
&= \sum_{\substack{d_1d_2\le N \\ \kbar (d_1)+\khat (d_2)\le \kbar(N)}} \one(d_1)\cdot \mu_{\le \sqrt N}(d_2) + \sum_{\substack{d_1d_2>N \\ \kbar (d_1)+\khat (d_2)\le \kbar(N)}} \one(d_1)\cdot \mu_{\le \sqrt N}(d_2)\\
&= \sum_{d_1d_2\le N} \one(d_1)\cdot \mu_{\le \sqrt N}(d_2) + \sum_{\substack{d_1d_2>N \\ \kbar (d_1)+\khat (d_2)\le \kbar(N)}} \one(d_1)\cdot \mu_{\le \sqrt N}(d_2)\\
&= \sum_{n=1}^N (\one\Conv \mu_{\le \sqrt N})(n) + \sum_{\substack{d_1d_2>N \\ \kbar (d_1)+\khat (d_2)\le \kbar(N)}} \one(d_1)\cdot \mu_{\le \sqrt N}(d_2)
\end{align*}

Summarizing, we have shown that the error term is

\begin{lemma}[Error term]\label{thm:error-term}
\begin{equation}\label{eq:error-term}
\sum_{k=0}^{\kbar (N)} (\bone\Conv \hmu)[k] - \sum_{n=1}^N (\one\Conv \mu_{\le \sqrt N})(n) = \sum_{\substack{d_1d_2>N \\ \kbar (d_1)+\khat (d_2)\le \kbar(N)}} \one(d_1)\cdot \mu_{\le \sqrt N}(d_2)
\end{equation}
\end{lemma}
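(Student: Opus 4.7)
The plan is to compute the left-hand sum $\sum_{k=0}^{\kbar(N)} (\bone \Conv \hmu)[k]$ by expanding every definition in sight, then split the resulting domain of summation into two regions according to whether $d_1 d_2 \le N$ or $d_1 d_2 > N$, and recognize one region as the Dirichlet-convolution sum we want to isolate.

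First I would unfold the array convolution: $(\bone \Conv \hmu)[k] = \sum_{k_1+k_2=k} \bone[k_1]\cdot \hmu[k_2]$. Writing $\bone[k_1] = \sum_{d_1:\kbar(d_1)=k_1} \one(d_1)$ by the definition of segmentation and $\hmu[k_2] = \sum_{d_2:\khat(d_2)=k_2} \mu_{\le \sqrt N}(d_2)$ by \Cref{thm:hmu-val}, and swapping summations, this becomes
\[
(\bone \Conv \hmu)[k] = \sum_{d_1,d_2 :\,\kbar(d_1)+\khat(d_2)=k} \one(d_1)\cdot \mu_{\le \sqrt N}(d_2).
\]
Summing over $k=0,\dots,\kbar(N)$ simply replaces the equality in the indexing condition by the inequality $\kbar(d_1)+\khat(d_2)\le \kbar(N)$.

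Next I would partition the resulting sum according to the sign of $d_1 d_2 - N$. The key observation — this is really the only non-trivial step — is that for any pair with $d_1 d_2 \le N$, the bound $\kbar(d_1)+\khat(d_2)\le \kbar(N)$ is automatic. Indeed, \Cref{eq:khat-vs-kbar} gives $\khat(d_2) \le \kbar(d_2)$, and \Cref{eq:kbar-subadditivity} gives $\kbar(d_1) + \kbar(d_2) \le \kbar(d_1 d_2)$; chaining these and using monotonicity of $\kbar$ yields $\kbar(d_1)+\khat(d_2) \le \kbar(d_1 d_2) \le \kbar(N)$. So the condition $\kbar(d_1)+\khat(d_2)\le \kbar(N)$ is redundant on the region $d_1 d_2 \le N$, and that part of the sum collapses to
\[
\sum_{d_1 d_2 \le N} \one(d_1)\cdot \mu_{\le \sqrt N}(d_2) = \sum_{n=1}^N (\one \Conv \mu_{\le \sqrt N})(n).
\]

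Finally I would subtract this Dirichlet-convolution sum from both sides; the leftover region $d_1 d_2 > N$ with $\kbar(d_1)+\khat(d_2)\le \kbar(N)$ is precisely the error term as stated. The main (and only) obstacle is the chain of inequalities used to show that the segmentation bound is implied by $d_1 d_2 \le N$ — everything else is a routine double-counting rearrangement.
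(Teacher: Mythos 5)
Your proposal is correct and follows essentially the same route as the paper: expand the array convolution via \Cref{thm:hmu-val} and the segmentation definition, sum over $k\le\kbar(N)$, and split on whether $d_1d_2\le N$, using $\kbar(d_1)+\khat(d_2)\le\kbar(d_1d_2)$ to show the segmentation constraint is automatic on the first region. The only difference is cosmetic — you spell out the chain $\khat(d_2)\le\kbar(d_2)$ and $\kbar(d_1)+\kbar(d_2)\le\kbar(d_1d_2)$ explicitly where the paper just cites \Cref{lem:kbar-khat}.
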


Using \Cref{lem:kbar-khat} we now find that

\begin{equation*} \kbar (d_1) + \khat (d_2) \ge \kbar (d_1) + \kbar (d_2) -\log_2 (d_2) \ge \kbar (d_1d_2) - 1 - \log_2(d_1d_2)
\end{equation*}

Therefore, for $n = d_1 \cdot d_2$ to contribute to the error term we must have $n > N$ and yet

\[ \floor*{\frac{\log_2(n)}{\Delta}} - 1 - \log_2(n) \le \floor*{\frac{\log_2 N}{\Delta}} \]

This already fails to hold for $\log_2(n) > \frac{2\Delta + \log_2 N}{1 - \Delta}$. The number of possible values for $d_1\cdot d_2$ is thus at most
\[ 2^{\frac{2\Delta + \log_2 N}{1 - \Delta}} - N = N\parens{2^{\frac{\Delta (2 + \log_2 N)}{1 - \Delta}} - 1} = O\parens{\Delta N \log N } \]

where to justify the approximation $2^x-1=O(x)$ we used the fact that $\Delta$ will end up being much smaller than $\frac{1}{\log N}$. We have proved the following:

\begin{lemma}\label{lem:critical-interval}
Assuming $\Delta = o\parens{\frac{1}{\log N}}$, the pairs $(d_1,d_2)$ contributing to the error term satisfy $d_1\cdot d_2\in (N, N + S]$ where
\[ S = N\parens{2^{\frac{\Delta (2 + \log_2 N)}{1 - \Delta}} - 1} = O\parens{\Delta N \log N } \]
We call $(N, N + S]$ \emph{the critical interval}.
\end{lemma}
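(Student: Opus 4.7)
The plan is to directly unfold the constraints on a contributing pair $(d_1, d_2)$ given by \Cref{thm:error-term} and translate them into an explicit bound on $d_1 d_2$, using \Cref{lem:kbar-khat} as the bridge between $\kbar$ and $\khat$. First I would record that a contributing pair satisfies both $d_1 d_2 > N$ and $\kbar(d_1) + \khat(d_2) \le \kbar(N)$, and then chain the two inequalities in \Cref{lem:kbar-khat}: applying \eqref{eq:khat-vs-kbar} to $d_2$ and then \eqref{eq:kbar-subadditivity} to the product gives
\[
\kbar(d_1) + \khat(d_2) \;\ge\; \kbar(d_1) + \kbar(d_2) - \log_2 d_2 \;\ge\; \kbar(d_1 d_2) - 1 - \log_2(d_1 d_2),
\]
since $\log_2 d_2 \le \log_2(d_1 d_2)$.

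Next, writing $n = d_1 d_2$ and combining with the contribution constraint, I would obtain $\kbar(n) - 1 - \log_2 n \le \kbar(N)$. Removing the floors in the definition of $\kbar$ (each floor costs at most $1$, which can be absorbed into the existing additive constant after unfolding) yields
\[
\frac{\log_2 n}{\Delta} - 1 - \log_2 n \;\le\; \frac{\log_2 N}{\Delta} + 1,
\]
which after multiplying through by $\Delta$ and rearranging becomes $\log_2 n \le (2\Delta + \log_2 N)/(1-\Delta)$. Exponentiating, any contributing $n$ lies in $(N, N+S]$ with
\[
S \;=\; 2^{(2\Delta + \log_2 N)/(1-\Delta)} - N \;=\; N\bigl(2^{\Delta(2 + \log_2 N)/(1-\Delta)} - 1\bigr).
\]

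Finally, I would invoke the hypothesis $\Delta = o(1/\log N)$: the exponent $\Delta(2 + \log_2 N)/(1-\Delta)$ is then $o(1)$, so $2^x - 1 = O(x)$ applies and delivers $S = O(\Delta N \log N)$, which matches the claimed bound.

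The one place that needs a bit of care is the floor bookkeeping in the middle step, since $\kbar$ is defined via $\lfloor \cdot \rfloor$ and the inequality must remain valid after the floors are dropped; everything else is a routine algebraic manipulation together with the already-established bounds, so I don't expect any real obstacle beyond tracking additive constants correctly.
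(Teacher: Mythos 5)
Your proof is correct and follows essentially the same chain of reasoning as the paper's own derivation: both combine \eqref{eq:khat-vs-kbar} and \eqref{eq:kbar-subadditivity} to bound $\kbar(d_1)+\khat(d_2)$ from below by $\kbar(d_1d_2)-1-\log_2(d_1d_2)$, unfold the floors, rearrange to get $\log_2 n \le (2\Delta+\log_2 N)/(1-\Delta)$, and invoke $\Delta = o(1/\log N)$ to justify $2^x-1=O(x)$. The floor bookkeeping you flag is handled exactly as in the paper and absorbs correctly into the additive constants.
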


To compute the error correction, we first factorize all numbers in the critical interval. Then we iterate over the divisors $d_1$ of each $n$ in the critical interval and compute the error terms. We now describe these two steps in more detail.

\subsubsection{Sieve}\label{sec:ec-sieve}
We sieve over $(N, N + S]$ with primes up to $\sqrt {N + S}$ to find the complete factorization of each number in this interval. This can be done using the famous sieve of Eratosthenes: we iterate over primes $p\le \sqrt {N+S}$ and mark all numbers divisible by powers of $p$. This requires finding the first point in the interval divisible by $p$, then marking jumps of $p$. For any number in the interval that was not fully factored in this way, the unfactored part must be a prime, for otherwise it would have been divisible by a prime below $\sqrt{N+S}$.

The time spent on each prime $p$ is $O\parens{1+\frac{S}{p}}$, so the total time required for sieving is the sum of this over $p\le \sqrt {N+S}$. Recalling that $S = O\parens{\Delta N \log N }$ and that $\sum_{p\le N}\frac{1}{p}=\ln\ln N +O(1)$ (Mertens' second theorem) we have shown the following:

\begin{lemma}\label{lem:ec-sieve}
Fully factoring all numbers in the critical interval can be done in $O\parens{\sqrt N + \Delta N \log N \log \log N}$ time.
\end{lemma}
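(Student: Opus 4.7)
The plan is to apply a segmented sieve of Eratosthenes to the critical interval $(N, N+S]$. First I would produce the list of primes $p \le \sqrt{N+S}$ using a linear-time variant of Eratosthenes (or the ordinary one, noting that $\Delta = o(1/\log N)$ forces $\sqrt{N+S} = O(\sqrt{N})$), so this preprocessing costs $O(\sqrt{N})$ and makes the list available for the main loop.

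For the actual factoring, I would allocate an array of length $S$ holding the residues of the integers in the critical interval (initialised to the integers themselves). Then, for each prime $p \le \sqrt{N+S}$ and each prime power $p^j \le N+S$ in turn, I would locate the first multiple of $p^j$ in the interval in $O(1)$ time (namely $p^j \cdot \lceil (N+1)/p^j \rceil$) and step through all of them with stride $p^j$, dividing the stored residue by $p$ on every hit and incrementing the recorded exponent of $p$. After every prime has been processed, the residue at index $i$ has been stripped of all prime factors $\le \sqrt{N+S}$; anything larger than $1$ that remains must be a prime exceeding $\sqrt{N+S}$, since its square would exceed $N+S$, which completes the factorization.

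For the time analysis, fix a prime $p$. Walking through multiples of $p^j$ for $j=1,2,\ldots$ costs $\sum_{j\ge 1} O(1 + S/p^j) = O(\log_p N + S/p)$, the geometric series in $j$ collapsing. Summing over primes $p \le \sqrt{N+S}$, the $S/p$ part is controlled by Mertens' second theorem $\sum_{p \le M} 1/p = \ln\ln M + O(1)$ and contributes $O(S \log\log N) = O(\Delta N \log N \log\log N)$. The $\log_p N$ part is $O(\log N / \log p)$ per prime, and since $\log p \ge \log 2$, it sums to $O(\pi(\sqrt{N+S}) \log N) = O(\sqrt{N})$. Combined with the $O(\sqrt{N})$ preprocessing, this yields the claimed $O(\sqrt{N} + \Delta N \log N \log\log N)$.

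The main point to check carefully is that including the higher powers $p^j$ ($j \ge 2$) does not inflate the per-prime cost: the geometric series in $j$ collapses so that the marking cost per $p$ is dominated by the $j=1$ term $O(S/p)$, with only an additive $O(\log_p N)$ for initial offsets. With this in hand the rest is essentially a textbook segmented-sieve analysis, and the dominant term $S \log \log N$ is exactly what appears in the statement.
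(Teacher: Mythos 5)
Your proof is correct and follows essentially the same approach as the paper: a segmented sieve of Eratosthenes over $(N, N+S]$ with primes $p \le \sqrt{N+S}$, with the $O(S/p)$ terms controlled via Mertens' second theorem and the residual factor automatically a prime $>\sqrt{N+S}$. The only difference is that you track the additive $O(\log_p N)$ cost of the higher prime powers $p^j$ explicitly (summing to $O(\sqrt N)$), whereas the paper states the per-prime cost simply as $O(1 + S/p)$; your accounting is slightly more careful but leads to the same bound.
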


\subsubsection{Divisor iteration}\label{sec:ec-div-iter}
For each number $n$ in the critical interval we may need to correct contributions of the form $\one(d_1)\cdot \mu_{\le \sqrt N}(d_2)$ given $d_1d_2 = n$. We iterate over all $n$'s square-free divisors $d_2$ with $\pmax(d_2)\le \sqrt N$, and remove the contribution of $\mu(d_2)$ (which is easily computable since we know the factorization of $d_2$) if and only if $\kbar (n/d_2)+\khat (d_2) \le \kbar(N)$, and otherwise do nothing.

\begin{lemma}\label{lem:ec-div-iter}
Divisor iteration can be done in $O\parens{\sqrt N + \Delta N \log^2 N}$ time.
\end{lemma}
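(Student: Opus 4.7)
My plan is to process each $n \in (N, N+S]$ using its factorization (already available from \Cref{lem:ec-sieve}) and enumerate exactly the divisors we need, spending $O(1)$ time per divisor. The square-free divisors $d_2$ of $n$ with $\pmax(d_2)\le \sqrt N$ correspond precisely to subsets of the set of prime factors of $n$ that are at most $\sqrt N$, of which there are $2^{\omega_{\le\sqrt N}(n)}$. I would enumerate these subsets by a depth-first recursion that maintains incrementally the product $d_2$, its Möbius value (a sign flip per added prime), and its approximate index $\khat(d_2)$, updated via $\khat(d_2 p) = \khat(d_2) + \kbar(p)$ using a precomputed table of $\kbar(p)$ for all primes $p \le \sqrt N$, built once in $O(\sqrt N)$ time alongside the sieve. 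For each $d_2$ visited, we then compute $\kbar(n/d_2)$ in $O(1)$, test the condition $\kbar(n/d_2) + \khat(d_2) \le \kbar(N)$, and, when it fails, add $\mu(d_2)$ to the correction.

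This reduces the total runtime to, up to constants, the number of $(n,d_2)$ pairs visited:
\[ \sum_{n \in (N, N+S]} 2^{\omega_{\le \sqrt N}(n)} \;\le\; \sum_{n \in (N, N+S]} \tau(n). \]
What remains is to bound this interval divisor sum. I would invoke the hyperbola method: since $\tau(n) = 2\,|\{d\mid n : d\le \sqrt n\}| - [\sqrt n\in\ZZ]$, one has
\[ \sum_{N < n \le N+S} \tau(n) \;\le\; 2\sum_{d\le \sqrt{N+S}} \bigl(\lfloor (N+S)/d\rfloor - \lfloor N/d\rfloor\bigr) \;\le\; 2\sum_{d\le \sqrt{N+S}} (S/d+1), \]
which evaluates to $O\parens{S\log N + \sqrt N}$. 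Substituting $S = O\parens{\Delta N \log N}$ from \Cref{lem:critical-interval} yields the claimed $O\parens{\sqrt N + \Delta N \log^2 N}$.

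The main subtlety I expect lies not in the counting but in justifying the per-divisor $O(1)$ accounting, namely that $\kbar(m)=\floor*{\log_2 m / \Delta}$ can be evaluated in $O(1)$ within the $w=\Theta(\log N)$-bit word-RAM model. This reduces to representing $\log_2$ values in fixed point with $O(\log N)$ bits of precision: the integer part is the bit length of $m$ (available in $O(1)$), and the fractional part can be maintained incrementally as $d_2$ grows, since we can add a precomputed $\log_2 p$ when extending and subtract from a precomputed $\log_2 n$ to obtain $\log_2(n/d_2)$. Once these bookkeeping details are in place, the per-divisor work is truly constant and the hyperbola bound above closes the argument.
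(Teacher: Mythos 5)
Your proposal is correct and follows essentially the same route as the paper: both arguments reduce the work to counting $(n, d_2)$ pairs, pair divisors above $\sqrt{N+S}$ with their cofactors, and bound the result by $2\sum_{d\le\sqrt{N+S}}(1+S/d) = O(\sqrt N + \Delta N \log^2 N)$. The explicit $\tau(n)$/hyperbola framing and the word-RAM remarks on $O(1)$-per-divisor bookkeeping are just more detailed renderings of the same core count, not a different proof.
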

\begin{proof}
We bound the time spent iterating over the divisors of all numbers in the critical interval by counting the time spent on each value of the divisor $d_2$. Divisors larger than $\sqrt{N+S}$ can be paired with divisors smaller than that bound, doubling their work. Moreover, each value of $d_2$ may divide no more than $\ceil*{S/d_2}$ values in an interval of size $S$. 

Recalling that $S = O\parens{\Delta N \log N }$ the correction time complexity is bounded by
\[ 2\sum_{d\le \sqrt{N+S}}\parens{1+\frac{S}{d}} = O\parens{\sqrt N + \Delta N \log^2 N}\]
\end{proof}

\Cref{lem:ec-div-iter} is improved upon in \Cref{sec:fewer-div}.

\subsection{Complexity analysis}\label{sec:basic-complexity}

Combining the above results, we have the following

\begin{theorem}\label{thm:pi-log3}
$\pi(N)$ can be computed in $O\parens{\sqrt{N} \log^3 N}$ time.
\end{theorem}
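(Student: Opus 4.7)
The plan is to assemble the ingredients already in place and pick $\Delta$ to balance the dominant costs. By \Cref{lem:pi-relation} it suffices to compute $\pi(\sqrt N)$ and the sum $\sum_{n=1}^N (\one\Conv\mu_{\le\sqrt N})(n)$, and then combine them. The first quantity falls out of running the sieve of Eratosthenes up to $\sqrt N$, which we do anyway to enumerate the primes used in \Cref{lem:hmu-log4}; this costs $O(\sqrt N \log\log N)$.

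For the second quantity, my plan is: build $\bone$ directly in $O(\log N/\Delta)$ time (just counting integers in each exponential bucket); build $\hmu$ in $O(\sqrt N \log\log N + \log^4 N/\Delta)$ time via \Cref{lem:hmu-log4}; convolve them using FFT on arrays of length $O(\log N/\Delta)$, costing $O(\log^2 N/\Delta)$; and finally sum the first $\kbar(N)+1$ cells of $\bone\Conv\hmu$. By \Cref{thm:error-term} the result differs from $\sum_{n=1}^N (\one\Conv\mu_{\le\sqrt N})(n)$ only by contributions coming from pairs $(d_1,d_2)$ with $d_1 d_2$ in the critical interval $(N,N+S]$ described in \Cref{lem:critical-interval}. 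I then invoke \Cref{lem:ec-sieve} to fully factor that interval in $O(\sqrt N + \Delta N\log N\log\log N)$ time and \Cref{lem:ec-div-iter} to perform the divisor iteration that subtracts off exactly those spurious contributions in $O(\sqrt N + \Delta N\log^2 N)$ time, producing the exact value of the sum and hence of $\pi(N)$.

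Putting everything together, the total time is dominated by the two opposing terms $\log^4 N/\Delta$ (from building $\hmu$) and $\Delta N \log^2 N$ (from divisor iteration in the critical interval). Equating them gives $\Delta^2 = \log^2 N / N$, so I take
\[
\Delta = \frac{\log N}{\sqrt N},
\]
which indeed satisfies $\Delta = o(1/\log N)$ as required for \Cref{lem:critical-interval}. With this choice both terms equal $\sqrt N \log^3 N$, and every other term in the above budget (the sieves, the FFT convolution of $\bone\Conv\hmu$, summing array cells, and the sieving cost inside the critical interval, which is smaller by a $\log N / \log\log N$ factor) is $O(\sqrt N \log^3 N)$ or lower-order.

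There is no real obstacle here beyond bookkeeping: the substantive work was done in \Cref{lem:hmu-log4,thm:error-term,lem:critical-interval,lem:ec-sieve,lem:ec-div-iter}. The only point worth double-checking is that the cost $O(\log^2 N/\Delta)$ of the single convolution $\bone \Conv \hmu$ is absorbed by $O(\log^4 N/\Delta)$ from \Cref{lem:hmu-log4}, and that the lower-order terms from the critical-interval sieve do not spoil the $\log^3 N$ exponent; both are immediate at our choice of $\Delta$.
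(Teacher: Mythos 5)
Your proposal is correct and follows essentially the same route as the paper: combine \Cref{lem:pi-relation}, \Cref{lem:hmu-log4}, \Cref{lem:ec-sieve}, and \Cref{lem:ec-div-iter}, then balance the $\log^4 N/\Delta$ term against $\Delta N \log^2 N$ to arrive at $\Delta = \Theta(\log N/\sqrt N)$. The extra bookkeeping you include (the $O(\log^2 N/\Delta)$ cost of the single FFT convolution, and the check that $\Delta = o(1/\log N)$ so \Cref{lem:critical-interval} applies) is accurate and harmless, just slightly more explicit than the paper's one-line combination of the same lemmas.
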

\begin{proof}
Combining \Cref{lem:pi-relation}, \Cref{lem:hmu-log4}, \Cref{lem:ec-sieve} and \Cref{lem:ec-div-iter}, the total time of the algorithm is bounded above by
\begin{equation*}
O\parens{\sqrt N \log \log N + \Delta N \log N \log \log N + \Delta N \log^2 N + \frac{\log^4 N}{\Delta}}
\end{equation*}
This is minimized for $\Delta = \Theta \parens{ \frac{\log N}{\sqrt N} }$ obtaining the required total time complexity.
\end{proof}

\subsection{Basic algorithm summary}\label{sec:basic-algorithm}
The high-level algorithm is summarized in the following pseudocode:

\begin{algorithm}[H]
\algnewcommand{\Var}[1]{\text{#1}}
\caption{Basic algorithm -- high level}\label{alg:basic-high-level}
\begin{algorithmic}
\Function{CountPrimes}{$N$}
\State $\Delta \gets \frac{\log_2 N}{\sqrt N}$
\State $\bone \gets $ \Call{Get\_$\bone$}{$N$, $\Delta$}
\State $\mathcal{P}_{\le \sqrt N} \gets $ primes up to $\sqrt N$
\State $\hmu \gets $ \Call{Get\_$\hmu$}{$N$, $\Delta$, $\mathcal{P}_{\le \sqrt N}$}
\State $\Var{muOneConv}\gets \bone \Conv \hmu$ \Comment{Convolution using FFT}
\State $\Var{approximateCount} \gets \sum_{i=0}^{\kbar(N)} \Var{muOneConv}[i]$
\State $\Var{correction} \gets $ \Call{ErrorCorrection}{$N$, $\Delta$}
\State \Return $\Var{approximateCount} - \Var{correction} + |\mathcal{P}_{\le \sqrt N}| - 1$
\EndFunction
\end{algorithmic}
\end{algorithm}

The subroutines are described in \Cref{alg:basic-phases}.

\begin{algorithm}[H]
\algnewcommand{\Var}[1]{\text{#1}}
\caption{Subroutines of the basic algorithm}\label{alg:basic-phases}
\begin{algorithmic}

\Function{Get\_$\bone$}{$N$, $\Delta$}
\State $\bone \gets \left[\ceil*{2^{\Delta (k+1)}} - \ceil*{2^{\Delta k}} \text{ for }0\le k \le \kbar(N)\right]$ \Comment{$\kbar(n) := \floor*{\frac{\log_2 n}{\Delta}}$}
\State \Return $\bone$
\EndFunction

\Statex
\Function{Get\_$\hmu$}{$N$, $\Delta$, $\mathcal{P}_{\le \sqrt N}$}
\State $C_0 \gets [1,0,0,\ldots,0]\text{ of length }\kbar(N) + 1$
\State $C_1 \gets [0,0,\ldots,0]\text{ of length }\kbar(N) + 1$
\For{$p\in \mathcal{P}_{\le \sqrt N}$}
\State $C_1[\kbar(p)] \gets C_1[\kbar(p)] + 1$
\EndFor
\State $E_1 \gets C_1$
\For{$r=2$ up to $\floor*{\log_2 N}$}
    \State $E_r \gets [0,0,\ldots,0]\text{ of length }\kbar(N)+1$
    \For{$k=0$ up to $\floor*{\frac{\kbar(N)}{r}}$}
        \State $E_r[r\cdot k] \gets E_1[k]$
    \EndFor
    \State $C_r \gets \frac{1}{r}\sum_{r'=1}^r (-1)^{r'-1} C_{r-r'} \Conv  E_{r'}$ \Comment{Convolutions using FFT}
\EndFor
\State $\hmu \gets \sum_{r=0}^{\floor*{\log_2 N}} (-1)^r C_r$ \Comment{Element-wise addition and multiplication}
\State \Return $\hmu$
\EndFunction

\Statex
\Function{ErrorCorrection}{$N$, $\Delta$}
\State $S \gets N\parens{2^{\frac{\Delta (2 + \log_2 N)}{1 - \Delta}} - 1}$
\State{Factorize numbers in $(N,N+S]$}
\State $\Var{correction} \gets 0$
\For{$n \in (N,N+S]$}
    \For{\text{square-free }$d\mid n$}
        \If{$\pmax(d)\le \sqrt N$ and $\kbar(n/d) + \khat(d) \le \kbar(N)$}
            \State $\Var{correction} \gets \Var{correction} + (-1)^{\omega(d)}$
        \EndIf
    \EndFor
\EndFor
\State \Return correction
\EndFunction
\end{algorithmic}
\end{algorithm}
\clearpage

Schematically, the algorithm flow is as follows:
\begin{figure}[H]
\centering
\begin{tikzpicture}[
every node/.style = {shape=rectangle, rounded corners,
                     draw, align=center, fill=white},              
   level 1/.style = {sibling distance = 12em},
   level 2/.style = {sibling distance = 12em},
   level 3/.style = {sibling distance = 20em},
   level 4/.style = {sibling distance = 11em},
]
\node {$\pi(N)$}
    child {node {$\sum_{n=1}^N (\one\Conv \mu_{\le \sqrt N})(n)$}
    child {node {$\sum_{k=0}^{\kbar (N)} (\bone\Conv \hmu)[k]$}
        child {node {$\hmu$}
            child {node{Newton identities}}
        }
    }
    child { node {Error term}
        child { node { Divisor iteration}
            child { node { Factorization of critical interval}
            }
        }
    }};
\end{tikzpicture}
\end{figure}

\section{Time Improvements}\label{sec:time-improvement}

In this section we describe time improvements to the basic prime-counting algorithm. These improvements reduce the time complexity from $O\parens{\sqrt{N} \log^3 N}$ down to $O\parens{\sqrt N \log N (\log \log N)^{3/2}}$.

\subsection{Applying Newton's identities in the Fourier space}\label{sec:better-newton}

We prove a better version of \Cref{lem:hmu-log4}:

\begin{lemma}\label{lem:hmu-log3}
$\hmu$, and hence also $\bone\Conv\hmu$, can be computed in $O\parens{\frac{\log^3 N}{\Delta}}$ time.
\end{lemma}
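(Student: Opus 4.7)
The plan is to shave a logarithmic factor off Lemma~\ref{lem:hmu-log4} by avoiding one FFT/inverse-FFT pair per convolution in the Newton recurrence, and instead carrying out the entire recurrence pointwise in Fourier space.

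First, I would build $E_1, E_2, \ldots, E_{r_{\max}}$ with $r_{\max} = \floor{\log_2 N}$ exactly as in the proof of Lemma~\ref{lem:hmu-log4}: $E_1$ counts the primes $\le \sqrt N$ by segment, and $E_r[rk] = E_1[k]$. Let $L = O\parens{\frac{\log N}{\Delta}}$ be the working array length, padded to at least $2\kbar(N)+1$ so that none of the convolutions we implicitly perform wrap around. I would then apply a single forward FFT to each of the $r_{\max}+1 = O(\log N)$ arrays $E_r$. Each FFT costs $O(L \log L) = O\parens{\frac{\log^2 N}{\Delta}}$, for a total of $O\parens{\frac{\log^3 N}{\Delta}}$.

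Next, observe that in Fourier space Equation~\ref{eq:newton-like} decouples across frequencies: at each frequency index $\omega$,
\[
r\,\hat{C}_r[\omega] \;=\; \sum_{r'=1}^{r} (-1)^{r'-1}\, \hat{C}_{r-r'}[\omega]\cdot \hat{E}_{r'}[\omega],
\qquad \hat{C}_0[\omega] = 1.
\]
I would evaluate this recurrence independently at each of the $L$ frequencies, computing $\hat{C}_1[\omega], \ldots, \hat{C}_{r_{\max}}[\omega]$ in order while accumulating $\hat H[\omega] := \sum_{r=0}^{r_{\max}} (-1)^r \hat{C}_r[\omega]$ on the fly. The work at a single frequency is $\sum_{r=1}^{r_{\max}} O(r) = O(\log^2 N)$ arithmetic operations, giving $O(L \cdot \log^2 N) = O\parens{\frac{\log^3 N}{\Delta}}$ in total. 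A single inverse FFT of $\hat H$ then recovers $\hmu$ in $O\parens{\frac{\log^2 N}{\Delta}}$ additional time, and if $\bone\Conv\hmu$ is desired it can be obtained by multiplying $\hat H$ pointwise by $\hat\bone$ before the inverse FFT, again at no extra asymptotic cost.

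Summing all contributions yields the claimed $O\parens{\frac{\log^3 N}{\Delta}}$ bound; the $O(\sqrt N \log\log N)$ cost of listing the primes $\le \sqrt N$ needed to initialize $E_1$ is dominated for the values of $\Delta$ used throughout the paper. The main point that needs care is that the pointwise Newton recurrence is well-defined in the arithmetic of choice: dividing by $r \le r_{\max}$ is immediate over $\CC$, and when using NTT it only requires picking a modulus coprime to $r_{\max}! = (O(\log N))!$, which is routine. The rest is bookkeeping around FFT lengths, already handled in the basic algorithm.
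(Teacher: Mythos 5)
Your high-level strategy (carry the Newton recurrence pointwise in Fourier space) matches the paper's, but there is a genuine gap in the padding argument that would make the algorithm, as you describe it, produce wrong answers.

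You claim that padding to $L \approx 2\kbar(N)+1$ ensures ``none of the convolutions we implicitly perform wrap around.'' That is not true if you never leave Fourier space. The Newton recurrence, applied without truncation, produces arrays $C_r$ whose support grows: $C_r$ counts products of $r$ distinct primes up to $\sqrt N$, so its nonzero entries extend to roughly $r\cdot\kbar(\sqrt N)\approx \frac{r}{2}\kbar(N)$. Already at $r\approx 5$ this exceeds $2\kbar(N)$, and those entries wrap around cyclically to indices below $\kbar(N)$, corrupting exactly the part of $\hmu$ you need. With $r_{\max}=\floor{\log_2 N}$, avoiding wrap-around without any truncation would require arrays of length $\Theta\parens{\frac{\log^2 N}{\Delta}}$, which sends the FFT cost and the pointwise-recurrence cost both up to $\Theta\parens{\frac{\log^4 N}{\Delta}}$ --- precisely the bound of \Cref{lem:hmu-log4}, not the improved one.

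The fix, which is the actual content of the paper's proof, is to truncate after each step: after computing $\widetilde{C_r}$ pointwise from the (already-truncated) $\widetilde{C_{<r}}$ and $\widetilde{E_{\le r}}$, apply one inverse FFT, zero out indices above $\kbar(N)$, and re-apply a forward FFT before using $\widetilde{C_r}$ in later steps. Since each $C_{r-r'}$ and $E_{r'}$ entering a single Newton step is supported on $[0,\kbar(N)]$, a \emph{single} step produces support at most $2\kbar(N)$, so $L=2\kbar(N)$ suffices for that step; the re-truncation then resets the support to $[0,\kbar(N)]$ and keeps it correct for the entries you care about, since computing $C_r[k]$ for $k\le\kbar(N)$ never reads $C_{r-r'}[j]$ with $j>k$. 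The extra $O(\log N)$ FFT/inverse-FFT pairs cost $O\parens{\frac{\log^3 N}{\Delta}}$ total and keep the number of Fourier coefficients at $O\parens{\frac{\log N}{\Delta}}$, so the pointwise work is $\sum_r O(r)\cdot O\parens{\frac{\log N}{\Delta}} = O\parens{\frac{\log^3 N}{\Delta}}$. (Also note that with $L=O(\kbar(N))$ you cannot read the $\widetilde{E_r}$'s off $\widetilde{E_1}$ via \Cref{lem:er-e1-fft}, since that lemma needs the array to contain up to index $\kbar(N^{r/2})$; you must transform each $E_r$ separately, which you do correctly.) So your account is close, but the per-step inverse-transform-and-truncate is not optional bookkeeping --- it is the step that makes the short padding legal.
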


Note that this lemma improves upon the previous complexity of $O\parens{\frac{\log^4 N}{\Delta}}$.
Before proving \Cref{lem:hmu-log3}, let us first prove a lemma that will be useful.

Recall that we use $E_r$ to denote the array corresponding to the $r$-th power of primes, that is $E_r = \{\{\khat (p^r)\}\} = \{\{r \khat (p)\}\}$. We denote by $\widetilde{E_r}$ the Fourier transform of the array $E_r$.

\begin{lemma}\label{lem:er-e1-fft}
Given $\widetilde{E_1}$ it is possible to produce each $\widetilde{E_r}$ for any $r$ in linear time (that is, $O(1)$ time per entry). This assumes the arrays $E_1$ and $E_r$ are truncated to the same length which is large enough to contain all nonzero entries of the infinite array $E_r$, that is, assuming FFT size is at least $\kbar(N^{r/2})$ elements.
\end{lemma}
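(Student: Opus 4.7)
The key observation is that $E_r$ is obtained from $E_1$ by $r$-fold dilation: since $\khat(p^r) = r\khat(p)$, we have $E_r[rk] = E_1[k]$ for every $k$, and $E_r[j] = 0$ whenever $r \nmid j$. Translating this into polynomials, if $P_1(x) = \sum_{k} E_1[k] x^k$ denotes the generating polynomial of $E_1$, then the generating polynomial of $E_r$ is simply $P_r(x) = P_1(x^r)$.

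The plan is to exploit this polynomial identity directly in Fourier space. Let $M$ be the common FFT size, and let $\zeta = e^{-2\pi i / M}$ be the associated primitive root of unity, so that by definition $\widetilde{E_1}[\omega] = P_1(\zeta^\omega)$ and $\widetilde{E_r}[\omega] = P_r(\zeta^\omega)$. Substituting gives
\[
\widetilde{E_r}[\omega] = P_1(\zeta^{r\omega}) = P_1(\zeta^{r\omega \bmod M}) = \widetilde{E_1}[r\omega \bmod M].
\]
Hence each entry of $\widetilde{E_r}$ is literally a readout of a single entry of $\widetilde{E_1}$ at a stretched index, and can be produced in $O(1)$ time; the whole array $\widetilde{E_r}$ is produced in linear time.

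The one subtlety, and the only thing that could go wrong, is aliasing: the identity $P_r(x) = P_1(x^r)$ holds for the infinite arrays, but we are working with truncated length-$M$ arrays, and we must ensure that no nonzero entry of the infinite $E_r$ gets wrapped around modulo $M$. Since $E_r$ is supported on indices of the form $r\khat(p)$ with $p \le \sqrt{N}$, its largest nonzero index is at most $r\kbar(\sqrt{N}) = \kbar(N^{r/2})$; thus the stated hypothesis that the FFT size is at least $\kbar(N^{r/2})$ is exactly what is needed so that the truncation of $E_r$ to $[0,M)$ loses no information and the DFT identity above transfers from the polynomial to its sample representation. The main (minor) obstacle is just to articulate this no-aliasing condition cleanly; the algebra itself is a one-line substitution.
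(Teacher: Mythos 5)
Your proposal is correct and follows essentially the same route as the paper: interpret $E_r$ as the polynomial $E_1(x^r)$, observe that the DFT of $E_r$ is then a readout of $\widetilde{E_1}$ at the index $r\omega \bmod M$, and note that the padding hypothesis ensures no wraparound. You spell out the index arithmetic and the no-aliasing condition a bit more explicitly than the paper, but the underlying argument is identical.
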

\begin{proof}
Note that, when the $E_r$ array is interpreted as a polynomial (by $E_r(x)=\sum_k E_r[k]x^k$), we have $E_r(x)=E_1(x^r)$ by definition. Since FFT essentially evaluates $E_r(x)$ on enough roots of unity, we can read them off from $\widetilde{E_1}$, noting that $x^r$ is also one of the roots of unity evaluated by the FFT of $E_1$.

Note that this fails when the maximal prime power represented in $E_r$ is larger than the array size, which is fixed by FFT size. The maximal prime we use in $\hmu$ is $\le \sqrt N$ so this power is $\le N^{r/2}$ and it is mapped to the cell $\kbar(N^{r/2})$.
\end{proof}

\begin{proof}[Proof of \Cref{lem:hmu-log3}]
Recall \Cref{eq:newton-like}:
\begin{equation*}
r C_r = \sum_{r'=1}^r (-1)^{r'-1} C_{r-r'} \Conv  E_{r'} \tag{\ref{eq:newton-like}}
\end{equation*}

Earlier we assumed each convolution is done individually using FFT. Nevertheless, \Cref{eq:newton-like} may be applied directly in Fourier space:

\begin{equation}\label{eq:newton-like-fourier}
r \widetilde{C_r} = \sum_{r'=1}^r (-1)^{r'-1} \widetilde{C_{r-r'}} \cdot \widetilde{E_{r'}}
\end{equation}

Here the product is a pointwise product of the two Fourier arrays $\widetilde{C_{r-r'}}$ and $\widetilde{E_{r'}}$. Using \Cref{lem:er-e1-fft} we can compute all $\widetilde{E_r}$'s with one application of FFT, then keep all the $\widetilde{C_r}$ and $\widetilde{E_r}$ arrays in Fourier space, use them to compute the Fourier transform of $\hmu$, and then translate back using a single inverse Fourier transform. This reduces the number of FFTs to 2.

However, \Cref{lem:er-e1-fft} has a different problem. Even though we are interested in numbers up to $N$, the arrays we compute correspond to numbers larger than that: the largest prime below $\sqrt{N}$ to the power of the maximal $r$ used in Newton's identities, that is $N^{r_{\text{max}}/2}$. To avoid cyclic overlap, we must keep the arrays long enough to contain the segments corresponding to any product of primes we are using, that is $\kbar(N^{r_{\text{max}}/2}) \approx \frac{r_{\text{max}}}{2\Delta} \log_2 N$, otherwise we end with a meaningless cyclic overlap. Since $r_{\text{max}} = \floor*{\log_2 N}$, the length of the array is $\Theta\parens{\frac{r_{\text{max}}}{\Delta} \log N} = \Theta\parens{\frac{\log^2 N}{\Delta}}$. We call this the problem of \emph{padding}.

Since $1/\Delta$ will be $N^{O(1)}$, each FFT will require $\Theta\parens{\frac{\log^3 N}{\Delta}}$ time to compute.

Nevertheless, the FFTs are no longer the bottleneck: applying \Cref{eq:newton-like-fourier} takes $O(r_{\text{max}}^2)$ time for each of the $\Theta\parens{\frac{\log^2 N}{\Delta}}$ Fourier coefficients, or in total $O\parens{\frac{\log^2 N}{\Delta} \cdot \log^2 N}$, that is $O\parens{\frac{\log^4 N}{\Delta}}$.

To avoid the padding, we note that after applying \Cref{eq:newton-like-fourier} for each $r$ we can apply inverse FFT and discard values larger than $\kbar(N)$. We keep all $\widetilde{C_r}$ and $\widetilde{E_r}$ arrays such that the original $C_r$ and $E_r$ arrays were truncated at $\kbar(N)$, and padded with zeros up to the total length of $2\kbar(N)$. This means we cannot use \Cref{lem:er-e1-fft} at this point, but we will employ it again in a better optimization later (see \Cref{sec:better-better-newton}). Rather, we compute all $\widetilde{E_r}$ with $O\parens{\frac{\log^2 N}{\Delta}}$ time each, totalling at $O\parens{\frac{\log^3 N}{\Delta}}$.

Now for each $r$ we use \Cref{eq:newton-like-fourier} to compute $\widetilde{C_r}$ where the underlying $C_r$ is \emph{not} truncated at $\kbar(N)$. Then, to fix this, we perform an inverse FFT, zero-out all entries after $\kbar(N)$, then perform FFT again to obtain the version of $\widetilde{C_r}$ to be used by future Newton's identities. The number of extra convolutions introduced here is of the same order as done for the $\widetilde{E_r}$'s. Now we still need $O(r)$ operations for each Fourier coefficient in the computation of \Cref{eq:newton-like-fourier}, but there are just $O\parens{\frac{\log N}{\Delta}}$ Fourier coefficients. This brings the substitution time to  $O\parens{\frac{\log^3 N}{\Delta}}$, and this is dominated by the time for the FFTs. Overall we obtain the running time of $O\parens{\frac{\log^3 N}{\Delta}}$.
\end{proof}

In the same way we proved \Cref{thm:pi-log3}, and combining with the above results, we have the following:

\begin{proposition}
$\pi(N)$ can be computed in $O\parens{\sqrt{N}(\log N)^{5/2}}$ time.
\end{proposition}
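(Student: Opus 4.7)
The plan is to follow the argument of \Cref{thm:pi-log3} verbatim, but with the bound of \Cref{lem:hmu-log4} replaced by the sharper bound of \Cref{lem:hmu-log3}, and then to re-optimize $\Delta$. The algorithm itself is unchanged: use \Cref{lem:pi-relation} to reduce $\pi(N)$ to $\sum_{k=0}^{\kbar(N)} (\bone\Conv\hmu)[k]$ minus the error term of \Cref{thm:error-term}, plus the directly computed contributions of $\pi(\sqrt N)$ and the number $1$.

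Collecting the four cost contributions, the total running time is bounded by
\[
O\!\left(\sqrt N \log\log N + \frac{\log^3 N}{\Delta} + \Delta N \log N \log\log N + \Delta N \log^2 N\right),
\]
where the first summand is the cost of enumerating primes up to $\sqrt N$, the second is from \Cref{lem:hmu-log3} (which supplants the $\log^4 N / \Delta$ term used in \Cref{thm:pi-log3}), the third is the sieving bound of \Cref{lem:ec-sieve}, and the fourth is the divisor iteration bound of \Cref{lem:ec-div-iter}. The $\Delta$-dependent terms are dominated by $\Delta N \log^2 N$ and $\log^3 N / \Delta$, and balancing these by setting $\Delta N \log^2 N = \log^3 N / \Delta$ yields $\Delta = \Theta\!\left(\sqrt{\log N / N}\right)$.

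With this choice, $\Delta N \log^2 N = \sqrt{N \log N}\cdot \log^2 N = \sqrt N (\log N)^{5/2}$, and one readily checks that all other summands are of no larger order. The only point worth verifying is that this $\Delta$ still satisfies the hypothesis $\Delta = o(1/\log N)$ required by \Cref{lem:critical-interval}; since $\sqrt{\log N / N} \ll 1/\log N$ for all sufficiently large $N$, the hypothesis holds and the error-correction phase proceeds as before. There is no real obstacle here — the proposition is essentially a re-optimization exercise — so the only care needed is to confirm that no new term becomes dominant once the $\hmu$-computation cost drops by a factor of $\log N$. This gives the claimed $O(\sqrt N (\log N)^{5/2})$ bound.
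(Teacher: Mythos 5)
Your proposal is correct and matches the paper's intent exactly: the paper does not spell out a proof for this proposition, instead remarking only that it follows ``in the same way we proved \Cref{thm:pi-log3}'' with \Cref{lem:hmu-log3} in place of \Cref{lem:hmu-log4}. Your re-optimization of $\Delta$ to $\Theta(\sqrt{\log N / N})$ and the resulting bookkeeping (including the check that $\Delta = o(1/\log N)$ so \Cref{lem:critical-interval} still applies) are precisely what is needed.
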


\subsection{Newton's identities via Newton iteration}\label{sec:better-better-newton}
We follow the steps of the proof of \Cref{lem:hmu-log3}. We note that in \Cref{eq:newton-like-fourier}, each coordinate is independent of the others, the $\widetilde{E_{r'}}$ arrays are known in advance, and we only need to determine the values of $\widetilde{C_r}$. We therefore want to quickly compute the values of a sequence $(c_r)_{r=0}^{r_{\text{max}}}$ defined by 
\begin{equation}\label{eq:newton-like-fourier-entry}
    rc_r=\sum_{r'=1}^r c_{r-r'} e_{r'}
\end{equation}
where $e_{r'}$ are known constants (for simplicity, we merged the $(-1)^{r'-1}$ coefficients with the Fourier coefficients of $E_{r'}$). Furthermore, we assume $c_0=1$ (recall $\widetilde{C_0}$ is the Fourier transform of $C_0=\delta_0$, and hence is an array with a known constant value). We define $e_0=0$, and can then write \Cref{eq:newton-like-fourier-entry} as
\begin{equation}\label{eq:newton-like-fourier-entry2}
    rc_r=\sum_{r'=0}^r c_{r-r'} e_{r'}
\end{equation}

Define the formal series $c(x)=\sum_{r=0}^\infty c_r x^r$ and $e(x)=\sum_{r=0}^\infty e_r x^r$. Then \Cref{eq:newton-like-fourier-entry2} is equivalent to $xc'(x) = e(x)\cdot c(x)$, so $c(x)=c(0) \exp(\int_0^x \frac{e(t)}{t} dt)$. For ease of notation, we let $f(x)=\int_0^x \frac{e(t)}{t}dt$ (note that $\frac{e(t)}{t}$ does not have a $t^{-1}$ coefficient, since $e(0)=0$). We have $c(0)=c_0=1$, so we finally get

\begin{equation}\label{eq:ce-exp}
c(x)=\exp(f(x))
\end{equation}

We are now able to compute the first $r_{\text{max}}$ coefficients of $c(x)$ in time $O\parens{r_{\text{max}} \log r_{\text{max}}}=O(\log N \log\log N)$, since $f(x)$ is computable in linear time, and exponentiation can be done with the asymptotic complexity of $O(1)$ FFTs \cite{fast_exp}.

If we choose to carry all computations in Fourier space without needing to truncate the arrays, there are $O\parens{\frac{\log^2 N}{\Delta}}$ entries in the arrays, bringing the time down to $O\parens{\frac{\log^3 N \log\log N}{\Delta}}$. While this has recovered almost the same complexity of \Cref{lem:hmu-log3}, doing so without truncating the arrays enables the further optimization described in the following section.

\subsection{Partitioning primes to reduce padding}\label{sec:partition-primes}

We incurred a $\log N$ factor in the convolutions complexity due to the extra padding needed for the $\widetilde{C_r}$ arrays. This was needed because we included products of primes much larger than $N$. Here we propose to apply Newton's identities for primes of different sizes separately, allowing for better control over the required padding. Of course, at the end we have to combine all results to obtain the required $\hmu$.

For a prime interval $[p_\text{min}, p_\text{max}]$, let us analyze the complexity of computing the convolutions only on these primes. Here we only need to consider $r$ up to $r_{\text{max}}=\frac{\log_2 N}{\log_2 p_{\text{min}}}$, since multiplying more primes will certainly produce numbers greater than $N$. This requires arrays of size $r_\text{max}\cdot \log_2 p_{\text{max}} /\Delta$ in order to have enough padding. Using the techniques of \Cref{sec:better-better-newton}, each entry's $C_r$ can be computed in $O\parens{r_{\text{max}} \log r_{\text{max}}}$ time. In total the time required for evaluating the Newton identities then is the product of the FFT size with this number.

In addition, we also need to compute $\widetilde{E_1}$ by a single FFT on the $E_1$ arrays, requiring an additional $O\parens{r_\text{max}\cdot \log_2 p_{\text{max}} /\Delta \cdot \log N}$ time  for each such prime interval.

Together, these give a total time of 
\begin{equation}\label{eq:prime-interval-runtime}
O\parens{\frac{r_{\text{max}} \log p_\text{max}}{\Delta} \parens{\log N + r_{\text{max}} \log r_{\text{max}}}}
\end{equation}
for each prime interval.

We choose to partition the primes at $\log_2 p = \frac{\log_2 N}{2^m}$ for $1\le m \le \log_2 \log_2 N$. Using \Cref{eq:prime-interval-runtime} with $\log_2 p_\text{max}=\frac{\log_2 N}{2^m}$ and $\log_2 p_\text{min}=\frac{\log_2 N}{2^{m+1}}$ (and hence $r_\text{max}=2^{m+1})$, implies running time of $O\parens{\frac{\log N}{\Delta} \parens{\log N + m 2^m}}$. Summing over the values of $m$ up to $\log_2 \log_2 N$, we obtain a time bound of $O\parens{\frac{\log^2 N \log \log N}{\Delta}}$.

This is summarized in the following improvement of \Cref{lem:hmu-log3}. 

\begin{lemma}\label{lem:hmu-log2}
$\hmu$, and hence also $\bone\Conv\hmu$, can be computed in time
\[O\parens{\sqrt N \log \log N + \frac{\log^2 N \log \log N}{\Delta}}\]
\end{lemma}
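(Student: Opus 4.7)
The plan is to execute precisely the partition strategy outlined in the paragraph preceding the lemma: split the primes $p \le \sqrt N$ into $O(\log\log N)$ geometric size classes and apply the Newton-iteration machinery of \Cref{sec:better-better-newton} to each class separately, so that each class needs only small FFT padding. Concretely, I would define classes $I_m$ by $\log_2 p \in (\log_2 N / 2^{m+1},\, \log_2 N / 2^m]$ for $1 \le m \le \lceil \log_2 \log_2 N \rceil$; the largest class ($m=1$) covers primes near $\sqrt N$, and the smallest class ($m=\lceil\log_2\log_2 N\rceil$) bottoms out around $p=2$, so the union exhausts all primes up to $\sqrt N$. For each class one runs the Newton-iteration approach in Fourier space to obtain a ``partial'' segmented smooth Möbius array $\hmu^{(m)}$, built from the $\mu_p$'s for $p\in I_m$ only.

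The key accounting step is the per-class complexity. In class $I_m$, products of more than $r_{\max} = \log_2 N / \log_2 p_{\min} = 2^{m+1}$ primes already exceed $N$, so the $C_r$ and $E_r$ arrays need only enough padding to represent products up to $p_{\max}^{r_{\max}}$, giving array length $O\parens{r_{\max} \log_2 p_{\max}/\Delta} = O(\log N / \Delta)$. Plugging $\log_2 p_{\max} = \log_2 N / 2^m$ and $r_{\max} = 2^{m+1}$ into \Cref{eq:prime-interval-runtime} yields a per-class cost of $O\parens{\frac{\log N}{\Delta}(\log N + m\,2^m)}$. Summing over $m \le \lceil\log_2\log_2 N\rceil$, the $\log N$ term contributes $O\parens{\frac{\log^2 N \log\log N}{\Delta}}$, and $\sum_m m\,2^m$ is dominated by its last term $O(\log N \log\log N)$, contributing the same order.

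Finally, one must combine the $O(\log\log N)$ partial arrays into $\hmu$. Since $\mu_{\le\sqrt N}$ factors as a Dirichlet product of the $\mu_p$'s, the correct combination is an array convolution of the $\hmu^{(m)}$'s, each truncated at $\kbar(N)$. Each such convolution is on arrays of length $O(\log N/\Delta)$ and costs $\tilde O(\log N / \Delta)$ by FFT; doing all $O(\log\log N)$ of them fits inside the stated bound. The initial sieve of Eratosthenes to enumerate primes up to $\sqrt N$ accounts for the additive $O(\sqrt N \log\log N)$ term; producing $\bone\Conv\hmu$ from $\hmu$ is a single convolution of the same order and is likewise absorbed.

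The main subtlety, which the geometric choice of partition is designed to resolve, is the tension between the padding constraint (each class's FFT must be long enough to contain $\kbar(p_{\max}^{r_{\max}})$ to avoid cyclic overlap, exactly as in the padding discussion in the proof of \Cref{lem:hmu-log3}) and the desire to keep $r_{\max} \log_2 p_{\max}$ uniformly $O(\log N)$. Our choice gives $r_{\max} \log_2 p_{\max} = 2\log_2 N$ for every $m$, so every class uses the same array length up to constants, making the sum over $m$ telescope into merely a $\log\log N$ factor. Verifying this uniformity and checking that the Newton-iteration exponential in \Cref{eq:ce-exp} applied coordinate-wise in Fourier space still costs $O(r_{\max}\log r_{\max})$ per coordinate when $r_{\max}$ shrinks per class is the one place routine care is needed.
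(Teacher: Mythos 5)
Your proposal is correct and follows essentially the same route as the paper's Section~\ref{sec:partition-primes}: same geometric partition at $\log_2 p = \log_2 N/2^m$, same per-class cost from \Cref{eq:prime-interval-runtime} giving $O\parens{\frac{\log N}{\Delta}(\log N + m\,2^m)}$, and the same summation over $m\le\log_2\log_2 N$. The only addition is that you spell out the cost of convolving the per-class partial arrays together (and tacking on $\bone$), which the paper leaves implicit but which, as you note, is absorbed into the stated bound.
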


\subsection{Shrinking the critical interval}
We explain why the size $S = O\parens{\Delta N \log N}$ of the critical interval described in \Cref{lem:critical-interval} can be reduced.

\begin{lemma}\label{lem:smaller-critical-interval}
The size $S$ of the critical interval can be taken as 
\[S=O\parens{\Delta N \frac{\log N}{\log \log N}}\]
\end{lemma}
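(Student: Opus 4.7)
The plan is to refine the bound $\khat(d_2)\ge \kbar(d_2)-\log_2 d_2$ from \eqref{eq:khat-vs-kbar}, which was the wasteful step in the derivation of $S=O(\Delta N\log N)$. The key observation is that for any pair $(d_1,d_2)$ contributing to the error term, $\mu_{\le\sqrt N}(d_2)\ne 0$, so $d_2$ is square-free with $\pmax(d_2)\le \sqrt N$. Hence $d_2$ has only $\omega(d_2)$ prime factors (counted with multiplicity), not $\log_2 d_2$ of them.

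First, I would iterate the subadditivity bound $\kbar(n_1n_2)\le \kbar(n_1)+\kbar(n_2)+1$ from \eqref{eq:kbar-subadditivity} along the prime factorization $d_2=p_1\cdots p_\omega$ (where $\omega=\omega(d_2)$), obtaining $\kbar(d_2)\le \sum_i \kbar(p_i)+(\omega-1)=\khat(d_2)+\omega-1$, i.e.\ $\khat(d_2)\ge \kbar(d_2)-\omega(d_2)+1$. Combined with $\kbar(d_1)+\kbar(d_2)\ge \kbar(d_1d_2)-1$, this gives $\kbar(d_1)+\khat(d_2)\ge \kbar(d_1d_2)-\omega(d_2)$. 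Hence the contributing pairs satisfy $\kbar(d_1d_2)\le \kbar(N)+\omega(d_2)$, which, after unfolding the floors in $\kbar$, yields $\log_2(d_1d_2)\le \log_2 N+\Delta(\omega(d_2)+1)$.

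Next, I would bound $\omega(d_2)$. Since $d_2$ is a product of distinct primes, $d_2\ge 2\cdot 3\cdot 5\cdots p_\omega$ is at least the $\omega$-th primorial; by Chebyshev/Mertens, $\log(p_\omega\#)=\theta(p_\omega)=\Theta(p_\omega)$, so $p_\omega=O(\log d_2)$ and $\omega=\pi(p_\omega)=O(\log d_2/\log\log d_2)$. To apply this I first need $d_2=O(N^{O(1)})$, which follows from the crude \eqref{eq:critical-interval} bound already proved (or, more cleanly, from the trivial $\khat(d_2)\ge \kbar(d_2)-\log_2 d_2$ bound, which gives $d_1d_2=O(N)$ under the assumption $\Delta=o(1/\log N)$). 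Thus $\omega(d_2)=O(\log N/\log\log N)$.

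Plugging back in, $d_1d_2\le N\cdot 2^{O(\Delta\log N/\log\log N)}=N\bigl(1+O(\Delta\log N/\log\log N)\bigr)$ using $2^x-1=O(x)$ for small $x$, so
\[
S=O\!\left(\frac{\Delta N\log N}{\log\log N}\right),
\]
as claimed. The only subtlety is the mild bootstrap (bounding $d_2$ before bounding $\omega(d_2)$); otherwise the argument is a direct sharpening of the previous analysis using square-freeness and the primorial estimate, and requires no new machinery.
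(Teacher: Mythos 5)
Your proof is correct and takes essentially the same route as the paper: both replace the crude bound $\khat(d_2)\ge\kbar(d_2)-\log_2 d_2$ with the sharper $\khat(d_2)\ge\kbar(d_2)-O(\omega(d_2))$, and then bound $\omega(d_2)=O(\log d_2/\log\log d_2)$ via the primorial growth estimate, exactly as the paper does. Your explicit mention of the minor bootstrap step (first bounding $d_1d_2=O(N)$ via the cruder \Cref{lem:critical-interval} before invoking the primorial estimate) is a careful observation that the paper leaves implicit, but it does not change the argument.
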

\begin{proof}
Indeed, to get there we used the bound $\kbar(d_1)+\khat(d_2) \ge \kbar(d_1d_2)-1-\log_2(d_1d_2)$, arising from the fact that $d_2$ can have at most $\log_2 d_2$ divisors. However, since we only care about square-free values of $d_2$, the largest number of primes dividing $d_2$ can be at most $O\parens{\frac{\log d_2}{\log\log d_2}}$, since the product of the first $m$ primes behaves like $\exp(\Theta(m \log m))$ (this follows from classical bounds on the first Chebyshev function). Therefore, the inequality is changed to

\begin{equation}\kbar(d_1)+\khat(d_2) \ge \kbar(d_1d_2)-1-O\parens{\frac{\log(d_1d_2)}{\log\log(d_1d_2)}}
\end{equation}

and thus we can take $S=O\parens{\Delta N \frac{\log N}{\log \log N}}$.\end{proof}

\subsection{Useful bounds on number-theoretic functions}\label{sec:bounds}
Before discussing additional improvements, we present here several bounds for later use.

We first cite a weaker version of a theorem of Shiu \cite{sum-small-interval-bound}, from which other results will readily follow.

\begin{lemma}[\cite{sum-small-interval-bound}]\label{lem:sum-small-interval-bound}
Let $f$ be a multiplicative function (that is, $f(ab)=f(a)f(b)$ whenever $\gcd(a,b)=1$).

If there exist $c, d$ such that $f(p^\ell)\le c \cdot \ell^{d}$ for all $\ell\ge 1$ and prime $p$, then for any $\epsilon>0$, if $N^{\epsilon} < S < N$, the following holds

\[
\sum_{n=N}^{N+S}f(n) = O\parens{\frac{S}{\log N} \exp\parens{\sum_{p\le 2N} \frac{f(p)}{p}}}
\]
\end{lemma}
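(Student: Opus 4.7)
The plan is to adapt the standard Shiu-style argument, which decomposes each $n$ in the interval according to its small and large prime factors and then treats the two pieces with different tools. Fix a parameter $y$ with $\log y \asymp \log N$ (concretely, $y = N^{\epsilon/3}$ works since $S > N^\epsilon$). For each $n \in (N, N+S]$, factor $n = ab$ uniquely with $a$ having all prime factors $\le y$ and $b$ having all prime factors $>y$. Since $\gcd(a,b)=1$, multiplicativity gives $f(n)=f(a)f(b)$, so
\begin{equation*}
\sum_{N<n\le N+S} f(n) \;=\; \sum_{\substack{a \\ p\mid a\Rightarrow p\le y}} f(a) \sum_{\substack{b\in I_a \\ p\mid b\Rightarrow p>y}} f(b), \qquad I_a=(N/a,(N+S)/a].
\end{equation*}

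For the inner sum, every $b\le 2N$ with all prime factors $>y$ has at most $\log(2N)/\log y = O(1/\epsilon)$ prime factors (counted with multiplicity), so the hypothesis $f(p^\ell)\le c\ell^d$ yields $f(b) = O(1)$. Hence the inner sum is bounded by a constant times the count of such $b$. I would invoke the Fundamental Lemma of the combinatorial (Brun) sieve, which gives
\begin{equation*}
\#\{b\in I_a : p\mid b\Rightarrow p>y\} \;\ll\; \frac{|I_a|}{\log y}\;\ll\;\frac{S}{a\log N},
\end{equation*}
valid provided $|I_a|\gg y^2$, i.e., $a\le S/y^2$. The contribution of $a>S/y^2$ is handled separately via a crude divisor bound $\sum_{n\le N+S}\tau(n)^{O(d)}\ll S\,(\log N)^{O(1)}$ combined with the smallness of this range of $a$.

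Combining and summing over $a$ reduces everything to an Euler product estimate:
\begin{equation*}
\sum_{\substack{a \\ p\mid a\Rightarrow p\le y}}\frac{f(a)}{a} \;\le\; \prod_{p\le y}\sum_{k\ge 0}\frac{f(p^k)}{p^k} \;=\; \prod_{p\le y}\!\left(1+\tfrac{f(p)}{p}+O(p^{-2})\right) \;=\; \exp\!\left(\sum_{p\le y}\frac{f(p)}{p}+O(1)\right),
\end{equation*}
where the last step uses $\log(1+x)=x+O(x^2)$ and $\sum_p p^{-2}=O(1)$. The missing primes in $(y,2N]$ contribute $\sum_{y<p\le 2N} f(p)/p$, which one can absorb by enlarging $y$ (or by noting that dropping them only makes the bound smaller, so the final estimate with $p\le 2N$ in the exponent still holds). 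Multiplying the Euler product by the sieve bound $S/(a\log N)$ and summing produces $\dfrac{S}{\log N}\exp\!\left(\sum_{p\le 2N}\tfrac{f(p)}{p}\right)$, as required. The main obstacle is bookkeeping at the two boundaries: (i) verifying that the Fundamental Lemma's sifting function is indeed $\sim 1/\log y$ uniformly in $a$ for all $a\le S/y^2$ (this requires the level of distribution condition $|I_a|\ge y^2$, hence the hypothesis $S>N^\epsilon$), and (ii) dominating the range $a>S/y^2$ by a crude but acceptable bound so it does not swamp the main term.
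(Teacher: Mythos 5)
The paper does not reprove this lemma at all --- the given proof is a one-line citation of Shiu's Theorem~1, together with the observation (which Shiu himself makes in a remark following the theorem) that the hypothesis $f(p^\ell)\le c\ell^d$ suffices for his conditions. Your sketch, by contrast, attempts to reconstruct Shiu's argument from scratch. Its skeleton --- factor $n=ab$ with $a$ the $y$-smooth part, bound the count of $y$-rough $b$ in $I_a$ by $\ll |I_a|/\log y$ via the Fundamental Lemma, and control the outer sum by the Euler product $\prod_{p\le y}(1+f(p)/p+\cdots)$ --- is indeed the backbone of Shiu's proof, so structurally you are on the right track.

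The part you wave off as ``bookkeeping'' is, however, where the real difficulty lies, and your proposed treatment of the range $a>S/y^2$ does not hold up. The claimed estimate $\sum_{n\le N+S}\tau(n)^{O(d)}\ll S(\log N)^{O(1)}$ is false as written: a sum over all $n\le N+S$ scales like $N(\log N)^{O(1)}$, not $S(\log N)^{O(1)}$. If instead you intended the sum restricted to $(N,N+S]$, then you would be invoking a Shiu-type bound for $\tau^{O(d)}$ in order to prove a Shiu-type bound --- circular. Moreover ``the smallness of this range of $a$'' is not a usable fact: $a$ ranges over $(S/y^2, N+S]$, and what must actually be shown is that $y$-smooth integers this large are sufficiently rare, which Shiu accomplishes by splitting into several sub-ranges according to the magnitude of $a$ and the number of prime factors of $b$, then applying Rankin's trick together with smooth-number density estimates. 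None of that collapses to a crude divisor count. So as written your proposal has a genuine gap precisely where Shiu's theorem is hard; for the purposes of this paper, citing Shiu directly --- as the paper does --- is both simpler and safer than attempting a self-contained proof.
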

\begin{proof}
This directly follow from \cite[Theorem~1]{sum-small-interval-bound}, where the sufficiency of the condition $f(p^\ell)\le c \cdot \ell^{d}$ is also explained there, in the remark immediately following the theorem statement.
\end{proof}

\begin{lemma}\label{lem:bound-divisors}
Let $\tau(n)$ be the number of divisors of $n \in \NN$. For any $\epsilon>0$, if $N^{\epsilon} < S < N$, then
\[ \sum_{n=N+1}^{N+S} \tau(n) = O(S \log N) \]
\end{lemma}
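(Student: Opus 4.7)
The plan is to apply \Cref{lem:sum-small-interval-bound} directly to the divisor function $\tau$. First I would verify that $\tau$ satisfies the required hypotheses: it is multiplicative, and for a prime power we have $\tau(p^\ell)=\ell+1\le 2\ell$ for $\ell\ge 1$, so the condition $f(p^\ell)\le c\cdot \ell^d$ holds with $c=2$ and $d=1$.

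With the hypotheses verified, \Cref{lem:sum-small-interval-bound} yields
\[
\sum_{n=N+1}^{N+S}\tau(n)=O\!\left(\frac{S}{\log N}\exp\!\left(\sum_{p\le 2N}\frac{\tau(p)}{p}\right)\right).
\]
Since $\tau(p)=2$ for any prime, the exponent is $2\sum_{p\le 2N}\frac{1}{p}$, and by Mertens' second theorem this equals $2\ln\ln N+O(1)$. Exponentiating gives a factor of $O(\log^2 N)$, and combining with the $\frac{S}{\log N}$ factor yields the claimed bound $O(S\log N)$.

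There is no real obstacle here; the only thing to check carefully is that the prime-power bound on $\tau$ is of the polynomial-in-$\ell$ form demanded by the remark following the Shiu theorem, which it clearly is. The range hypothesis $N^\epsilon < S < N$ is inherited verbatim from \Cref{lem:sum-small-interval-bound}, so no additional work is required.
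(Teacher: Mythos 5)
Your proof is correct and follows exactly the same route as the paper: apply \Cref{lem:sum-small-interval-bound} to $\tau$, verify multiplicativity and the $\tau(p^\ell)=\ell+1\le c\,\ell^d$ growth condition, and evaluate the exponential of $\sum_{p\le 2N}\tau(p)/p = 2\ln\ln(2N)+O(1)$ via Mertens' second theorem to get the $O(\log^2 N)$ factor, which combines with $S/\log N$ to give $O(S\log N)$. The only difference is that you spell out explicit constants ($c=2$, $d=1$) where the paper simply asserts the condition is satisfied.
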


\begin{proof}
We apply \Cref{lem:sum-small-interval-bound} to the function $f=\tau$. This is easily seen to be a multiplicative function. Also, $\tau(p^\ell) = \ell+1$ satisfies the condition in the theorem. Hence:
\begin{align*}
\sum_{n=N+1}^{N+S} \tau(n) &= O\parens{\frac{S}{\log N} \exp\parens{\sum_{p\le 2N} \frac{2}{p}}} \\ &= O\parens{\frac{S}{\log N} \exp\parens{2 \ln \ln (2N) + O(1)}}
\\ &= O\parens{S \log N}
\end{align*}
\end{proof}

\begin{lemma}\label{lem:bound-omega}
For any $\epsilon>0$, if $N^{\epsilon} < S < N$, then
\[ \sum_{n=N+1}^{N+S} \omega(n) = O(S \log \log N) \]
\end{lemma}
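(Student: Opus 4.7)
The plan is to swap the order of summation in $\sum_{n=N+1}^{N+S}\omega(n)$ and turn it into a sum over primes, since $\omega$ is additive rather than multiplicative and therefore not directly amenable to Shiu's theorem (\Cref{lem:sum-small-interval-bound}). Writing $\omega(n)=\sum_{p\mid n}1$, I would get
\[
\sum_{n=N+1}^{N+S}\omega(n) \;=\; \sum_{p\le N+S}\bigl|\{n\in(N,N+S]:p\mid n\}\bigr|,
\]
and then bound the inner count by $\lfloor S/p\rfloor + 1$.

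Next I would split the sum over primes at the threshold $p=S$. For the range $p\le S$, Mertens' second theorem ($\sum_{p\le x}1/p = \ln\ln x + O(1)$) gives
\[
\sum_{p\le S}\parens{\tfrac{S}{p}+1} = S\bigl(\ln\ln S + O(1)\bigr) + \pi(S) = O(S\log\log N),
\]
using $\pi(S)=O(S/\log S)=O(S)$.

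The only subtlety is the range $p>S$, where the $+1$ correction could in principle accumulate into something too large. Here I would instead swap the order back and bound
\[
\sum_{S<p\le N+S}\bigl|\{n\in(N,N+S]:p\mid n\}\bigr| \;\le\; \sum_{n\in(N,N+S]}\bigl|\{p>S:p\mid n\}\bigr|.
\]
For each such $n\le 2N$, if it has $k$ distinct prime divisors all exceeding $S$, then $n\ge S^k$, so $k\le \log(2N)/\log S \le 1/\epsilon + O(1)$ using the hypothesis $S>N^\epsilon$. This bounds the entire tail contribution by $O(S/\epsilon)=O(S)$.

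Adding the two ranges yields the claimed $O(S\log\log N)$. The main (mild) obstacle is handling the $p>S$ tail cleanly; the key observation is that the assumption $S>N^\epsilon$ forces every $n\le N+S$ to have only $O(1/\epsilon)$ prime factors above $S$, so the naive $\pi(2N)$ bound on the number of large primes is replaced by a much better bound that merges into the overall estimate. An alternative route via Shiu applied to the multiplicative function $2^{\omega(n)}$ yields only $O(S\log N)$, so the direct interchange-and-Mertens argument is the natural one.
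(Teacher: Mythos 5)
Your proposal is correct and follows essentially the same route as the paper's proof: both split the primes at $p=S$, bound the small-prime contribution via Mertens' second theorem as $O(S\log\log N)$, and handle the tail $p>S$ by noting that $S>N^\epsilon$ forces each $n\le N+S$ to have only $O(1/\epsilon)$ prime divisors above $S$, contributing $O(S)$. Your remark that Shiu applied to $2^{\omega(n)}$ would only give $O(S\log N)$ is a correct observation about why the direct interchange is preferable here.
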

\begin{proof}
Each $n\in(N, N+S]$ can be divisible by at most $1/\epsilon + 1$ primes larger than $S > N^{\epsilon}$, so these primes contribute at most $O(S)$ to the sum. Primes $\le S$ contribute at most

\[ 
  \sum_{p\le S} \ceil*{\frac{S}{p}} < \sum_{p \le S} \parens{1+\frac{S}{p}} = 
  O(S + S\log \log S) = O(S \log \log N)
\]
\end{proof}

\begin{lemma}\label{lem:bound-num-div-on-high-omega}
For any $\epsilon>0$, if $N^{\epsilon} < S < N$, then for any $k$, let $\mathcal{S}_k$ be the set of integers in $(N,N+S]$ having at least $k$ distinct prime factors. Then summing up the number of square-free divisors $2^{\omega(n)}$ for all $n \in \mathcal{S}_k$ satisfies
\[ \sum_{n \in \mathcal{S}_k} 2^{\omega(n)} = \sum_{\substack{n \in (N,N+S]\\ \omega(n)\ge k}} 2^{\omega(n)} = O\parens{\frac{S \log^3 N}{2^k}} \]
\end{lemma}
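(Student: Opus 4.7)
The plan is to apply the earlier Shiu-style bound (\Cref{lem:sum-small-interval-bound}) not to $2^{\omega(n)}$ directly, but to $4^{\omega(n)}$, and then to exploit the simple inequality $4^{\omega(n)} \ge 2^k \cdot 2^{\omega(n)}$ whenever $\omega(n)\ge k$ to recover the factor of $2^{-k}$.

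First, I would note that $f(n)=4^{\omega(n)}$ is multiplicative with $f(p^\ell)=4$ for every prime power, so it satisfies the hypothesis of \Cref{lem:sum-small-interval-bound} with $c=4$, $d=0$. Applying that lemma gives
\[
\sum_{n=N+1}^{N+S} 4^{\omega(n)} = O\parens{\frac{S}{\log N}\exp\parens{\sum_{p\le 2N}\frac{4}{p}}} = O\parens{\frac{S}{\log N}\exp(4\ln\ln(2N)+O(1))} = O(S\log^3 N),
\]
using Mertens' theorem on $\sum_{p\le 2N} 1/p$ exactly as in the proof of \Cref{lem:bound-divisors}.

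Second, for any $n$ with $\omega(n)\ge k$ we trivially have $4^{\omega(n)} = 2^{\omega(n)}\cdot 2^{\omega(n)} \ge 2^k\cdot 2^{\omega(n)}$. Summing over $n\in\mathcal{S}_k\subseteq (N,N+S]$ and dividing by $2^k$ yields
\[
\sum_{n\in\mathcal{S}_k} 2^{\omega(n)} \le \frac{1}{2^k}\sum_{n\in\mathcal{S}_k} 4^{\omega(n)} \le \frac{1}{2^k}\sum_{n=N+1}^{N+S} 4^{\omega(n)} = O\parens{\frac{S\log^3 N}{2^k}},
\]
which is the claimed bound.

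There is no real obstacle here: the only thing to check is that $4^{\omega(n)}$ meets the hypotheses of \Cref{lem:sum-small-interval-bound}, which is immediate since it is a bounded multiplicative function. The step that does all the work is the ``amplification'' trick of applying Shiu to $c^{\omega(n)}$ with $c=4$ rather than $c=2$, so that the large factor $2^k$ on $\mathcal{S}_k$ can be pulled outside the sum while still leaving a convergent weighted count on the right.
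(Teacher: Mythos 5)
Your proof is correct and takes essentially the same approach as the paper: both apply \Cref{lem:sum-small-interval-bound} to $4^{\omega(n)}$ and use the inequality $2^{\omega(n)} \le 2^{2\omega(n)-k}$ (equivalently $4^{\omega(n)} \ge 2^k\cdot 2^{\omega(n)}$) on $\mathcal{S}_k$ to extract the $2^{-k}$ factor. The only difference is cosmetic ordering of steps.
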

\begin{proof}
We argue that:
\[
 \sum_{n \in \mathcal{S}_k} 2^{\omega(n)} \le
 \sum_{n=N+1}^{N+S} 2 ^ {2\omega(n) - k}
\]
Since if $\omega(n) < k$, then $n$ contributes 0 to the left hand side, and otherwise we have $2 ^ {\omega(n)} \le 2 ^ {2\omega(n) - k}$.

We apply \Cref{lem:sum-small-interval-bound} to the function $4 ^ {\omega(n)}$ on the interval, giving
\begin{align*}
  \sum_n 4 ^ {\omega(n)} 
  &= O\parens{\frac{S}{\log N} \exp\parens{\sum_{p\le 2N} \frac{4}{p}}} \\ 
  &= O\parens{\frac{S}{\log N} \exp\parens{4 \ln \ln (2N) + O(1)}}
\\ &= O\parens{S \log ^ 3 N}
\end{align*}
We divide by $2^k$ to obtain the desired bound.
\end{proof}

\subsection{Testing fewer divisors}\label{sec:fewer-div}
We now revisit the divisor iteration and improve upon the bound given in \Cref{lem:ec-div-iter}. The idea is that for a given $n$ in the critical interval $(N, N+S]$, we only need to iterate over square-free divisors $d$ with $\pmax(d)\le \sqrt N$ for which $\kbar(n/d)+\khat(d) \le \kbar(N)$. Since $\kbar(n/d)+\khat(d)\ge \kbar(n)-1-\omega(d)$, for each $n$ we only need to iterate over divisors $d$ with $\omega(d)\ge \kbar(n) - \kbar(N) - 1$.

\begin{definition}[critical divisor]
For $n$ in the critical interval $(N, N+S]$, a divisor $d$ of $n$ is called a \emph{critical divisor} if it is square-free and
\[\omega(d)\ge\kbar(n) - \kbar(N) - 1\]
\end{definition}

Rephrasing the previous logic, we have shown that only critical divisors of $n$ can contribute to the error correction.

This section has two main goals. The first is to show that we can efficiently iterate over only the critical divisors of each $n$ in the critical interval. The second is to tightly bound the number of critical divisors to produce better running-time guarantees.

Denote by $D(n)$ the number of critical divisors of $n$.

\begin{lemma}\label{lem:critical-div-iter}
For each $n$, we can iterate all critical divisors of $n$ in $O(\omega(n)+D(n))$ time.
\end{lemma}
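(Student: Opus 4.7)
The plan is to enumerate the critical divisors of $n$ by a carefully pruned DFS over subsets of the small prime divisors of $n$, maintaining the running product incrementally and short-circuiting as soon as no further exclusions can be made. First I would extract from the known factorization of $n$ (already available from the sieve step of \Cref{sec:ec-sieve}) the list $p_1 < p_2 < \ldots < p_s$ of its prime factors that are $\le \sqrt N$, in $O(\omega(n))$ time; primes of $n$ exceeding $\sqrt N$ can never appear in a critical divisor. Setting $k = \max(0,\, \kbar(n) - \kbar(N) - 1)$ and $t = s - k$, enumerating the critical divisors of $n$ is then equivalent to enumerating subsets $T \subseteq \{p_1, \ldots, p_s\}$ with $|T| \le t$, since each such $T$ corresponds bijectively to the critical divisor $d = \prod_{i \notin T} p_i$ (which satisfies $\omega(d) = s - |T| \ge k$ and $\pmax(d) \le \sqrt N$).

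The DFS operates on a state $(i, r, d)$, where $i$ is the next prime index to decide, $r$ is the remaining exclusion budget, and $d$ holds the product of all small primes not yet placed in $T$. The initial call is $(1,\, t,\, p_1 \cdots p_s)$. Each call does the following: if $r = 0$ (short-circuit) or $i > s$ (base case), output $d$ and return; otherwise branch into (A) divide $d$ by $p_i$, recurse with $(i+1,\, r-1)$, then multiply $d$ back by $p_i$ to restore it; and (B) recurse with $(i+1,\, r)$. Since each transition updates $d$ by a single multiplication or division, each recursive call does $O(1)$ work and the value of $d$ at any leaf is already the correct divisor.

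For the complexity analysis, the key observation is that every internal node of this recursion tree has exactly two children — both options A and B are always available whenever $r > 0$ and $i \le s$ — while every leaf (reached either by the short-circuit or by the base case) outputs a distinct critical divisor. Hence the tree is a strict binary tree with exactly $D(n)$ leaves and therefore at most $2D(n) - 1$ nodes, each doing $O(1)$ work. Combined with the $O(\omega(n))$ setup, this yields the claimed $O(\omega(n) + D(n))$ bound.

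The main obstacle, and the only reason this bound is not entirely routine, is the short-circuit at $r = 0$: without it, the ``exclude nothing further'' tail of the DFS would add a chain of length $\Theta(s-i)$ for each output, degrading the bound to $O(\omega(n) \cdot D(n))$ in the regime where $D(n)$ is small (for instance when $k = s$, where $D(n) = 1$). Its correctness is immediate, since once the exclusion budget is exhausted every remaining prime must stay in $d$, so the current value of $d$ is already the divisor to be output.
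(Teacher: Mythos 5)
Your proposal is correct and takes essentially the same approach as the paper's two-sentence sketch (a pruned DFS over subsets of the prime divisors); you have simply made the state machine, the short-circuit at $r=0$, and the strict-binary-tree counting argument explicit, which is a faithful and helpful expansion of what the paper leaves implicit. One small nitpick on accuracy: as the paper defines it, a critical divisor of $n$ need only be square-free with $\omega(d)\ge\kbar(n)-\kbar(N)-1$, so primes $>\sqrt N$ \emph{can} appear in one — what is true is that such a divisor never passes the subsequent $\pmax(d)\le\sqrt N$ test and hence never contributes to the error term. Filtering those primes out before the DFS, as you do, is therefore a harmless (indeed slightly beneficial) deviation: your recursion enumerates a subset of the critical divisors rather than all of them, but the leaf count is still bounded by $D(n)$ and the $O(\omega(n)+D(n))$ bound is unaffected.
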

\begin{proof}
In the sieving phase we find the prime factors of each $n\in (N,N+S]$. Then, given $n$, we can iterate over square-free divisors $d$ of $n$, starting with the square-free part of $n$, then recursively finding primes to remove from $d$ as long as the number of remaining primes is at least $\kbar(n) - \kbar(N) - 1$, essentially iterating a binary tree of square-free divisors in a depth-first search.
\end{proof}

Combining \Cref{lem:smaller-critical-interval} and \Cref{lem:bound-omega} bounds the first part of the total work:

\begin{equation}\label{eq:omega-sum-critical-interval}
\sum_{n=N+1}^{N+S} \omega(n) = O(\Delta N \log N)
\end{equation}

It remains to tightly bound the number of critical divisors $D(n)$, which is then a bound on the total work done when iterating the critical divisors in the manner described in \Cref{lem:critical-div-iter}.

\begin{lemma}\label{lem:total-critical-divisors}
\[ \sum_{n = N+1}^{N+S} D(n) = O(\Delta N \log N \log \log N) \]
\end{lemma}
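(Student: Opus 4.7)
The natural approach is to decompose the critical interval $(N, N+S]$ according to the value of $k_0(n) := \max(0, \kbar(n)-\kbar(N)-1)$---the threshold that $\omega(d)$ must exceed for $d$ to be a critical divisor---and handle each bucket $I_k := \{n \in (N,N+S] : k_0(n) = k\}$ separately. By the log-scale segmentation, each $I_k$ is confined to one segment of length $O(N\Delta \cdot 2^{k\Delta}) = O(N\Delta)$ throughout the relevant range $k = O(\log N / \log\log N)$, and $\sum_{k \le K}|I_k| = O(N K \Delta)$. Note that the trivial pointwise bound $D(n) \le 2^{\omega(n)}$ alone, even summed via Shiu's theorem, yields only $O(S \log N) = O(\Delta N \log^2 N / \log\log N)$, which is too weak by a factor of $\log N / \log^2\log N$; we must also exploit that the constraint $\omega(d) \ge k_0(n)$ bites harder when $k_0(n)$ is large.

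The key idea is to bound $D(n) \le 2^{\omega(n)} \cdot \mathbf{1}[\omega(n) \ge k_0(n)]$ on each bucket $I_k$ in two complementary ways via the Shiu-type estimates from \Cref{sec:bounds}. First, applying Shiu's theorem directly to the multiplicative function $2^{\omega}$ yields $\sum_{n \in I_k} 2^{\omega(n)} = O(|I_k| \log N)$. Second, reusing the trick of \Cref{lem:bound-num-div-on-high-omega} (that $2^{\omega(n)} \le 2^{2\omega(n)-k}$ whenever $\omega(n) \ge k$), combined with Shiu on $4^{\omega}$, yields $\sum_{n \in I_k,\, \omega(n) \ge k} 2^{\omega(n)} \le 2^{-k} \sum_{n \in I_k} 4^{\omega(n)} = O(|I_k| \log^3 N / 2^k)$. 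Combining, $\sum_{n \in I_k} D(n) = O\bigl(|I_k| \cdot \min(\log N,\, \log^3 N / 2^k)\bigr)$.

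To finish, split the sum over $k$ at $k^\star := 2\log\log N$, where the two minimands agree at $\log N$. For $k \le k^\star$ the first bound dominates, and $\sum_{k \le k^\star}|I_k| = O(N \Delta \log\log N)$, contributing $O(\Delta N \log N \log\log N)$. For $k > k^\star$ the second bound is smaller, and the geometric tail $\sum_{k > k^\star} 2^{-k} = O(\log^{-2} N)$ makes its contribution only $O(\Delta N \log^3 N / \log^2 N) = O(\Delta N \log N)$. Adding the two yields the claimed $O(\Delta N \log N \log\log N)$. The main technical check is that Shiu's theorem applies on each sub-interval $I_k$, i.e.\ that $|I_k| \ge N^\varepsilon$; this is comfortably satisfied for all values of $\Delta$ used in the paper, e.g.\ $\Delta = \Theta(\log N / \sqrt N)$ gives $|I_k| = \Theta(\sqrt N \log N)$.
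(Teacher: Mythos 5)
Your proof is correct and follows essentially the same route as the paper's: decompose the critical interval into segments where $\kbar(n)$ (equivalently $k_0$) is constant, bound the small-$k$ segments via Shiu's theorem applied to a divisor-counting function and the large-$k$ segments via \Cref{lem:bound-num-div-on-high-omega}, and split at a threshold of order $\log\log N$. The only cosmetic differences are that you bound by $2^{\omega(n)}$ where the paper invokes \Cref{lem:bound-divisors} for $\tau(n)$, and you cut at $2\log\log N$ where the paper cuts at $3\log_2\log_2 N$; both choices yield the stated bound.
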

\begin{proof}
Recall that the critical interval $(N, N+S]$ consists of segments of size $O(\Delta N)$, where in each segment the value of $\kbar(n)$ is constant. On the $k$-th segment we have $\kbar(n) - \kbar(N) - 1 = k$, so in this segment the critical divisors are square-free $d$'s that satisfy:
\[ \omega(d) \ge \kbar(n) - \kbar(N) - 1 = k \]

For $k < 3 \log_2 \log_2 N$, we pessimistically assume all the divisors are critical. The amount of numbers in those segments is $O(\Delta N \log \log N)$. The total number of divisors of those numbers is bounded by $O(\Delta N \log N \log \log N)$ using \Cref{lem:bound-divisors}. This already gives us the desired bound, and we argue that the contribution of the rest of the segments is negligible.

We now wish to bound the total number of square-free divisors with $\omega(d) \ge k$ in the $k$-th interval, for $k \ge 3 \log_2 \log_2 N$. \Cref{lem:bound-num-div-on-high-omega} gives a bound of $O\parens{\frac{\Delta N \log ^ 3 N}{2^k}}$, which diminishes exponentially with $k$, so the total number of critical divisors for segments $k \ge 3 \log_2 \log_2 N$ is bounded by
\[ 
  O\parens{\frac{\Delta N \log ^ 3 N}{2^{3 \log_2 \log N}}} =
  O\parens{\Delta N}
\]

The bound we obtained for the contribution of segments $k \ge 3 \log \log N$ is much smaller than the desired bound, so the result follows.
\end{proof}

\begin{lemma}\label{lem:error-correction-logn}
Error correction can be done in $O(\Delta N \log N \log \log N)$ time.
\end{lemma}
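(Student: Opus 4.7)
The plan is a direct assembly: combine the improved size of the critical interval from \Cref{lem:smaller-critical-interval} with an updated sieving estimate and with the divisor iteration bounds of \Cref{lem:critical-div-iter} and \Cref{lem:total-critical-divisors}. All of the combinatorial heavy lifting is already in those lemmas; what remains is to plug in the new $S = O\parens{\Delta N \log N / \log\log N}$ and sum the resulting estimates.

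First I would revisit the sieving step of \Cref{lem:ec-sieve} with the smaller $S$. The same sieve-of-Eratosthenes argument gives
\[
  \sum_{p\le\sqrt{N+S}}\parens{1+\frac{S}{p}} = O\parens{\sqrt N + S\log\log N} = O\parens{\sqrt N + \Delta N \log N}
\]
by Mertens' second theorem, which is comfortably within the target bound.

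For the divisor iteration, \Cref{lem:critical-div-iter} guarantees that each $n \in (N,N+S]$ can be processed in $O(\omega(n)+D(n))$ time. Summing over the interval, \eqref{eq:omega-sum-critical-interval} (itself obtained from \Cref{lem:bound-omega} combined with the new $S$) gives $\sum_n \omega(n) = O(\Delta N\log N)$, while \Cref{lem:total-critical-divisors} gives $\sum_n D(n) = O(\Delta N \log N \log\log N)$. Adding the sieving and iteration costs yields $O\parens{\sqrt N + \Delta N \log N \log \log N}$, and the $\sqrt N$ term is absorbed whenever $\Delta = \Omega\parens{1/(\sqrt N \log N \log\log N)}$, which holds at the values of $\Delta$ used by the algorithm (in particular $\Delta = \Theta(\log N / \sqrt N)$ from \Cref{sec:basic-complexity}).

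There is not much of a genuine obstacle here, as every contributing sum has already been bounded. The only point worth double checking is that the segment-by-segment analysis inside the proof of \Cref{lem:total-critical-divisors} really is applied with the updated $S$ from \Cref{lem:smaller-critical-interval}; that proof is phrased in terms of the number of $\kbar$-segments inside the critical interval, so propagating the $1/\log\log N$ improvement is immediate and the stated time bound follows.
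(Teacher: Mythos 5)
Your proof is essentially the paper's proof: the paper's \Cref{lem:error-correction-logn} is proved exactly by combining \Cref{lem:critical-div-iter}, \Cref{lem:total-critical-divisors}, and \Cref{eq:omega-sum-critical-interval}, which is what you do. The one deviation is that you also fold in the sieving cost, which the paper deliberately leaves out of this lemma: in the paper's organization, ``error correction'' here refers only to the divisor-iteration phase (\Cref{sec:fewer-div} opens by announcing an improvement to \Cref{lem:ec-div-iter}), while sieving is handled separately later in \Cref{lem:small-sieve}. Including the sieve forces you to absorb a $\sqrt N$ term under an assumption on $\Delta$ that the lemma as stated does not carry; dropping the sieve discussion removes that wrinkle and matches the paper's unconditional bound.
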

\begin{proof}
By combining \Cref{lem:critical-div-iter} with
\Cref{lem:total-critical-divisors} and \Cref{eq:omega-sum-critical-interval}.
\end{proof}

\subsection{Reducing the sieve work}\label{sec:sieve-less}
Here we follow a path similar to \Cref{sec:fewer-div}, aiming at reducing the sieve work. We start by noting that we only need to factorize all numbers in the first $O(\log \log N)$ segments of the critical interval. In the remaining segments we ideally only need to factorize a small portion of the numbers -- only those with critical divisors. We find and factorize those numbers by restricting the sieve to numbers that have a divisor with many prime factors. We give the full details in the rest of this section.

\subsubsection{Numbers with many divisors are rare}
We provide the following lemma, that will be useful later.

\begin{lemma}\label{lem:bound-many-divisors}
For any $\epsilon>0$, if $N^{\epsilon} < S < N$, then amount of numbers in $(N, N+S]$ having at least $k$ distinct prime factors is $O\parens{\frac{S \log N}{2^k}}$
\end{lemma}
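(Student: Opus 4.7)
The plan is to reduce this to a direct application of \Cref{lem:sum-small-interval-bound}, in analogy with the proof of \Cref{lem:bound-num-div-on-high-omega}, but using the function $f(n)=2^{\omega(n)}$ in place of $4^{\omega(n)}$.

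First I would observe the trivial ``Markov-type'' inequality: for every $n$ with $\omega(n)\ge k$ one has $2^{\omega(n)} \ge 2^k$, and hence
\[
\bigl|\{n\in(N,N+S]:\omega(n)\ge k\}\bigr| \;\le\; \frac{1}{2^k}\sum_{n=N+1}^{N+S} 2^{\omega(n)}.
\]
So it suffices to show $\sum_{n=N+1}^{N+S} 2^{\omega(n)} = O(S\log N)$.

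Next I would verify the hypotheses of \Cref{lem:sum-small-interval-bound} for $f(n)=2^{\omega(n)}$. Multiplicativity is immediate since $\omega$ is additive on coprime arguments. The growth condition $f(p^\ell)\le c\ell^d$ holds trivially: $f(p^\ell)=2^{\omega(p^\ell)}=2$ for every $\ell\ge 1$, so $c=2$, $d=0$ suffice. Plugging into \Cref{lem:sum-small-interval-bound} and using Mertens' theorem $\sum_{p\le 2N}\tfrac{1}{p}=\ln\ln(2N)+O(1)$, we obtain
\[
\sum_{n=N+1}^{N+S} 2^{\omega(n)} \;=\; O\!\parens{\frac{S}{\log N}\exp\!\parens{\sum_{p\le 2N}\frac{2}{p}}} \;=\; O\!\parens{\frac{S}{\log N}\cdot\log^2 N} \;=\; O(S\log N).
\]
Combining with the first display yields the desired $O\!\parens{\frac{S\log N}{2^k}}$ bound.

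There is no real obstacle here; the only point that requires care is confirming that the very weak growth bound $f(p^\ell)=2$ falls under the hypothesis of \Cref{lem:sum-small-interval-bound} (which it clearly does, with $d=0$). The rest is routine substitution and Mertens' estimate, exactly mirroring the style of the earlier bounds in \Cref{sec:bounds}.
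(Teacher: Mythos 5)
Your proof is correct and matches the paper's own argument essentially verbatim: apply \Cref{lem:sum-small-interval-bound} to $f(n)=2^{\omega(n)}$ to get $\sum_{n=N+1}^{N+S}2^{\omega(n)}=O(S\log N)$, then divide by $2^k$ via the Markov-type bound. The only (harmless) difference is that you explicitly verify the growth hypothesis $f(p^\ell)\le c\ell^d$ with $c=2,d=0$, which the paper leaves implicit.
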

\begin{proof}
We apply \Cref{lem:sum-small-interval-bound} to the function $2^{\omega(n)}$. We obtain:
\begin{align*}
\sum_{n=N+1}^{N+S} 2^{\omega(n)} 
&= O\parens{\frac{S}{\log N} \exp\parens{\sum_{p\le 2N} \frac{2}{p}}} \\ 
&= O\parens{\frac{S}{\log N} \exp\parens{2 \ln \ln (2N) + O(1)}}
\\ &= O\parens{S \log N}
\end{align*}

We note that the indicator for $n$ having at least $k$ distinct prime factors is bounded by $2 ^ {\omega(n) - k}$. The desired bound is obtained by combining the above results.
\end{proof}

\subsubsection{Sieving a small interval}\label{sec:sqrt3-sieve}
The sieve of Eratosthenes can be improved to efficiently factorize numbers in a given range $[N, N+A]$ assuming $A = \Omega (\sqrt[3]{N})$. The algorithm is described in \cite{improved_eratosthenes} and is based on an earlier algorithm by \cite{improved_atkin} that implements a similar idea for the sieve of Atkin. Note that this sieve requires $O(A \log N)$ time rather than the $O(A \log \log N)$ time required by the classical sieve of Eratosthenes.

\subsubsection{Sieve restrictions}
Recall that the critical interval $(N, N+S]$ consists of segments of size $O(\Delta N)$, where in each segment the value of $\kbar(n)$ is constant. In the $k$-th segment, only numbers with $\omega(n) \ge k$ can contribute to error correction.

For $k < 18 \log_2 \log_2 N$, we use a regular sieve to factorize all numbers. This takes $O\parens{\Delta N (\log \log N)^2}$ time, since factorization sieve costs $\log \log N$ per element.

For the remaining segments $k \ge 18 \log_2 \log_2 N$, we note that only numbers with $\omega(n) \ge k$ may have critical divisors. For such numbers, multiplying their smallest $k/6$ distinct prime factors yields a divisor which is not larger than $N^{1/6}$. Therefore, we can find all these numbers by restricting our sieve to numbers of the form $n = md$, where $m\in\NN$ and $d$ iterates over all $d \le N^{1/6}$ with $\omega(d) \ge k/6$. We note that this method might recover the same number more than once. We can use a hash-table to remove duplicates, or we can just use the factorization obtained by the sieve in order to discard numbers obtained from $d$ which is not the product of the smallest $k/6$ distinct prime factors of the number (this product is unique per number).

For each segment of size $s$ and for each $d$ we have to sieve an interval of size $\ceil*{\frac{s}{d}}$. Since $s = \Theta(\Delta N) = \tilde{\Theta}(\sqrt N)$ and $d \le N^{1/6}$, the sieve interval will be at least $\tilde{\Omega}(\sqrt[3] N)$, large enough to satisfy the requirement of \Cref{sec:sqrt3-sieve}.

The cost of sieving a single such $d$ using \Cref{sec:sqrt3-sieve} is $O\parens{\frac{\Delta N}{d} \log N}$.  Omitting the big-$O$ notation and keeping the constraint $\omega(d) \ge k/6$ implicit to ease notation, we wish to bound:
\begin{align}
\sum_{d=1}^{N^{1/6}} \frac{\Delta N}{d} \log N
&\le \sum_{m=0}^{\floor*{\frac{1}{6}\log_2 N}} \sum_{d = 2^m}^{2^{m+1}-1} \frac{\Delta N}{d} \log N \nonumber \\ 
&\le \sum_{m=0}^{\floor*{\frac{1}{6}\log_2 N}} \sum_{d = 2^m}^{2^{m+1}-1} \frac{\Delta N}{2^m}  \log N\label{eq:sieve-restrictions-bound}
\end{align}

According to \Cref{lem:bound-many-divisors}, the amount of $d \in [2^m, 2^{m+1}-1)$ with $\omega(d) \ge k/6$ is $O\parens{\frac{2^m \log N}{2^{k/6}}}$. Substituting into \Cref{eq:sieve-restrictions-bound} we obtain a bound on the work of the $k$-th segment:
\begin{align*}
\sum_{m=0}^{\floor*{\frac{1}{6}\log_2 N}} \frac{\Delta N \log N}{2^{k/6}} \log N = \frac{\Delta N \log^3 N}{2^{k/6}}
\end{align*}

Thus the sieving work diminishes exponentially with the segment index $k$. We recall that we only apply a restricted sieve for segments with $k \ge 18 \log_2 \log_2 N$, thus bounding the sieve time by $O\parens{\Delta N}$. This  bound is already smaller than the time invested for sieving the first $18 \log_2 \log_2 N$ segments, which was $O\parens{\Delta N (\log \log N)^2}$.

This proves:
\begin{lemma}\label{lem:small-sieve}
Sieving can be done in $O\parens{\Delta N (\log \log N)^2}$ time.
\end{lemma}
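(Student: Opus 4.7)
The plan is to split the critical interval $(N, N+S]$ into the contiguous segments of length $\Theta(\Delta N)$ on which $\kbar(n)$ is constant, index them by $k = \kbar(n) - \kbar(N) - 1 \in \{0, 1, \ldots, \kbar(N)\}$, and bound the sieving cost per segment by a method that depends on whether $k$ is small or large. I would set the threshold at $k_0 = 18 \log_2 \log_2 N$.

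For the small regime $k < k_0$, I would simply run a classical factorizing sieve of Eratosthenes over the union of the first $O(\log \log N)$ segments. The total length sieved is $O(\Delta N \log \log N)$, and the factorizing sieve spends $O(\log \log N)$ time per element (by the $\sum_{p \le \sqrt{N+S}} 1/p$ bound used already in \Cref{lem:ec-sieve}), yielding $O\parens{\Delta N (\log \log N)^2}$ time, which matches the target.

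For the large regime $k \ge k_0$, I would invoke the observation from \Cref{sec:fewer-div} that only $n$ with $\omega(n) \ge k$ can have a critical divisor, and then exploit a pigeonhole argument: if the distinct prime divisors of $n \le 2N$ are $p_1 < \cdots < p_k$, partitioning them into six consecutive blocks of $k/6$ primes each shows that the product of the smallest block is at most the sixth root of the full product, hence $\prod_{i=1}^{k/6} p_i \le (2N)^{1/6} = O(N^{1/6})$. This lets me enumerate every relevant $n$ in the segment by iterating over each $d \le N^{1/6}$ with $\omega(d) \ge k/6$ and sieving the arithmetic progression of multiples of $d$ inside the segment. The sub-interval swept has length $\Theta(\Delta N / d)$, which is $\tilde\Omega(\sqrt[3]{N})$ since $\Delta N = \tilde\Theta(\sqrt N)$ and $d \le N^{1/6}$, so the cubic-root segmented sieve of \Cref{sec:sqrt3-sieve} applies at cost $O\parens{(\Delta N / d) \log N}$. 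Duplicates arising from multiple valid $d$ per $n$ can be removed by a hash table or by retaining only the $d$ equal to the product of the $k/6$ smallest prime factors of $n$.

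To aggregate the large-$k$ cost I would partition the $d$'s dyadically, $d \in [2^m, 2^{m+1})$. By \Cref{lem:bound-many-divisors} the count of such $d$ with $\omega(d) \ge k/6$ is $O\parens{2^m \log N / 2^{k/6}}$, and each contributes $O(\Delta N \log N / 2^m)$, so the $2^m$ cancels and the per-dyadic-range cost is $O\parens{\Delta N \log^2 N / 2^{k/6}}$. Summing over the $O(\log N)$ ranges $m \le \tfrac{1}{6}\log_2 N$ and then the geometric series over $k \ge k_0 = 18 \log_2 \log_2 N$ produces a bound of $O\parens{\Delta N \log^3 N \cdot 2^{-3\log_2 \log_2 N}} = O(\Delta N)$, which is absorbed by the small-$k$ bound and yields the claimed total. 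The main obstacle is the structural pigeonhole plus the verification that $\Delta N / d$ stays above the $\sqrt[3]{N}$ precondition of \Cref{sec:sqrt3-sieve} uniformly over $d \le N^{1/6}$ — this is precisely where the factor $1/6$ is forced; everything else is careful geometric-sum bookkeeping using \Cref{lem:bound-many-divisors}.
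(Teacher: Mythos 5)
Your proof is correct and follows essentially the same route as the paper: split the critical interval into segments indexed by $k = \kbar(n) - \kbar(N) - 1$, sieve the first $18\log_2\log_2 N$ segments classically at $O(\log\log N)$ cost per element, and for larger $k$ restrict the sieve to multiples of $d\le N^{1/6}$ with $\omega(d)\ge k/6$, using \Cref{sec:sqrt3-sieve} for the sub-intervals and \Cref{lem:bound-many-divisors} with a dyadic decomposition to bound the count, then geometrically sum over $k$. The only addition you make beyond the paper is to spell out the pigeonhole justification for why the smallest $k/6$ distinct prime factors multiply to at most $(2N)^{1/6}$, which the paper asserts without proof; your stated range for $k$ (up to $\kbar(N)$) is cosmetically too generous but irrelevant since you sum a convergent geometric series anyway.
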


\subsection{Accounting for divisors with a look-up table}\label{sec:div-table}

\subsubsection{Simplified error-correction condition}
In this section we present an improvement that further reduces the time complexity of the error correction phase by a logarithmic factor.
We are able to prove this improvement only assuming a randomized algorithm. We believe that in practice this assumption should work even without randomization, though we cannot prove it to work for a deterministic algorithm.

Recall that a divisor $d$ of $n\in (N, N+S]$ is relevant for error correction only if $\kbar (n/d)+\khat (d)\le \kbar(N)$ and $d$ a square-free with $\pmax(d)\le \sqrt N$, and in this case we accumulate $(-1)^{\omega(d)}$ into the error term.

Denote by $\{x\}=x-\floor*{x}$ the fractional part of $x$. 

Setting $d=\prod_{i} p_i$, we can rewrite the inequality $\kbar (n/d)+\khat (d)\le \kbar(N)$ as

\[ \floor*{\frac{\log_2 (n/d)}{\Delta}} + \sum_i \floor*{\frac{\log_2 p_i}{\Delta}} \le \kbar(N) \]

which in turn is equivalent to

\[ \frac{\log_2 (n/d)}{\Delta} - \left\{\frac{\log_2 (n/d)}{\Delta}\right\} + \sum_i \frac{\log_2 p_i}{\Delta} - \sum_i \left\{\frac{\log_2 p_i}{\Delta}\right\} \le \kbar(N) \]

or:

\[ \left\{\frac{\log_2 (n/d)}{\Delta}\right\} + \sum_i \left\{\frac{\log_2 p_i}{\Delta}\right\} \ge \frac{\log_2 n}{\Delta} - \kbar(N) \]

which is equivalent to:

\[ -\left\{\frac{\log_2 n}{\Delta}\right\} + \left\{\frac{\log_2 (n/d)}{\Delta}\right\} + \sum_i \left\{\frac{\log_2 p_i}{\Delta}\right\} \ge \kbar(n) - \kbar(N) \]

The left-hand side is an integer, being the sum of fractional parts of numbers whose sum is an integer. Since the right-hand side is an integer, and since $\left\{\frac{\log_2 (n/d)}{\Delta}\right\}$ is in $[0, 1)$, this inequality is equivalent to

\begin{equation}\label{eq:table-div-criterion} 
 -\left\{\frac{\log_2 n}{\Delta}\right\} + \sum_i \left\{\frac{\log_2 p_i}{\Delta}\right\} > \kbar(n) - \kbar(N) - 1
\end{equation}

The crux of this inequality is that it depends on $n$ only through $\kbar(n)$, $\left\{\frac{\log_2 n}{\Delta}\right\}$ and the fractional values of the $\frac{\log_2 p}{\Delta}$ for primes $p$ in $n$'s factorization. Since we only need to iterate over square-free divisors of $n$, we only need to keep these fractional values for the distinct primes in $n$. We observe that the correction terms are $(-1)^{\omega(d)}$ for the $d$'s satisfying the inequality, and hence also does not depend on the exact values of the primes.

\subsubsection{Look-up tables}\label{sec:lut}

Based on \Cref{eq:table-div-criterion}, the idea is to construct a look-up table based on approximations of the prime divisors.

More concretely, we set some new precision parameter $\epsilon$, and compute $\floor*{\frac{1}{\epsilon}\left\{\frac{\log_2 n}{\Delta}\right\}}$ and the multi-set $\left\{\floor*{\frac{1}{\epsilon}\left\{\frac{\log_2 p}{\Delta}\right\}} : p\mid n \right\}$ of each $n$ in the critical interval, where each prime factor is taken once, ignoring multiplicities. The total time required for this operation is that of iterating through the factorization of each number in the critical interval. This costs $\omega(n)$ per number, and thus it is on par with the sieve work, already bounded in the previous section. The goal is to use these rounded values as the key to the look-up table.

For each segment in the critical interval we construct a different look-up table that will be shortly described. The important idea is to restrict the number of factors: for each segment, we only create a table for numbers $n$ with up to $4 \log_2 \log_2 N$ prime factors including multiplicity, since these numbers are the majority. This also means there is no use for constructing these tables for more than the first $\approx 4 \log_2 \log_2 N$ segments, as the critical divisors for numbers in these segments will not be accounted for.

Hence, for each such segment, there are 
\begin{equation}\label{eq:table-size-general}
\text{Table size} = \binom{4 \log_2 \log_2 N + \floor*{1/\epsilon} + 1}{4 \log_2 \log_2 N} \cdot \parens{\floor*{1/\epsilon} + 1}
\end{equation} possible entries to be computed in the table -- the number of ways to distribute up to $4 \log_2 \log_2 N$ balls (the factors) to the $\floor*{1/\epsilon} + 1$ cells (according to the values taken by $\floor*{\frac{1}{\epsilon}\left\{\frac{\log_2 p}{\Delta}\right\}}$. The number of entries is multiplied by $\floor*{1/\epsilon} + 1$ in order to take into account the value of $\floor*{\frac{1}{\epsilon}\left\{\frac{\log_2 n}{\Delta}\right\}}$).

In each entry in the table, we add all error-terms corresponding to divisors for which the approximated factorization is enough to determine. We additionally store the description of any divisor not entirely determined by the given approximation, as an additional list in the look-up table entry. We will choose $\epsilon$ to make this list short enough in expectation.

For each specific pair $(n, d)$ where $d$ is a square-free divisor of $n$, let us inspect \Cref{eq:table-div-criterion} again, and note that each summand has an additive uncertainty of at most $\epsilon$, and hence the left-hand side has an uncertainty bounded by $4 \epsilon \log_2 \log_2 N$. We now analyze the probability of the inequality not being fully determined by the given approximation, when $\frac{1}{\Delta}$ is chosen uniformly at random from some range $\left[X, X+1\right]$ for some $X$ (since we will eventually choose $\frac{1}{\Delta} = \tilde{O}(\sqrt N)$ in order to balance the two parts of our algorithm, we are free to randomize it in this way).

Since the right-hand side of \Cref{eq:table-div-criterion} is an integer, if the left-hand side is far enough from an integer we can check if the inequality holds given the approximated values. The fractional value of the left-hand side is (using the general $\{\{x\}\pm\{y\}\}=\{x\pm y\}$):

\[
\left\{-\frac{\log_2 n}{\Delta} + \sum_i \frac{\log_2 p_i}{\Delta}\right\} = \left\{\frac{1}{\Delta}\parens{-\log_2 n + \sum_i \log_2 p_i}\right\}
\]

We now observe that, if $\Delta$ is randomly chosen such that $\frac{1}{\Delta}$ is uniformly distributed in $[X,X+1]$, the value of $\frac{1}{\Delta}\parens{-\log_2 n + \sum_i \log_2 p_i}$ is uniformly distributed in $\left[-(X+1)\log_2 \frac{n}{d}, -X\log_2 \frac{n}{d}\right]$, which is an interval of length at least $1$ as long as $d<n$, since then $n/d\ge 2$. Therefore, the probability of the fractional value being at most $\epsilon\cdot 4 \log_2 \log_2 N$ from an integer is bounded by $O\parens{\epsilon \log \log N}$. The special case of $d=n$, to which the above logic does not apply, can be checked without using the table.

In total, then, using \Cref{lem:sum-small-interval-bound} once again, we can bound the expected number of divisors (of numbers $n$ with at most $\omega(n)\le 4 \log_2 \log_2 N$ divisors) not being accounted for in each segment (of size $O(\Delta N)$) by 

\[O\parens{\epsilon \log \log N \cdot \sum_{n\text{ in segment}} 2^{\omega(n)}} = O\parens{\epsilon \Delta N \log N \log \log N}
\]

If we set $\epsilon = O\parens{\frac{1}{\log^2 N \log \log N}}$, the total work done accessing the remaining undetermined divisors will be negligible compared to accessing the entry of each number.

Using \Cref{eq:table-size-general} with the general estimation $\binom{n}{k}=\parens{\Theta \parens{\frac{n}{k}}}^k$, this implies a table of size $\exp(O(\log^2 \log N))$. This is sub-polynomial in $N$, and therefore is asymptotically negligible.

\subsubsection{Work on numbers missing from table}

We use \Cref{lem:bound-num-div-on-high-omega} to bound the work on numbers missing the table, that is, $n$ with $\omega(n) > 4 \log_2 \log_2 N$. The total extra work is then bounded by the work needed to iterate over all square-free divisors of numbers $n$ with $\omega(n)>\log\log N$, which by \Cref{lem:bound-num-div-on-high-omega} is bounded by

\[ O\parens{2^{-4 \log_2 \log_2 N} S \log^3 N} = O\parens{\frac{S}{\log N}}. \]

Using \Cref{lem:smaller-critical-interval}, this is bounded by $O\parens{\Delta N}$ which is indeed negligible compared to the rest of the error correction phase work.

\begin{lemma}\label{lem:ec-using-tables}
Error correction can be done in $O\parens{\Delta N (\log \log N)^2}$ time in expectation.
\end{lemma}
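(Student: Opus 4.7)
The plan is to add up the contributions of all stages of the error-correction phase and verify each separately; the heavy lifting is already done in \Cref{sec:sieve-less} and in the paragraphs immediately preceding the lemma, so the proof is primarily a bookkeeping exercise. First, sieving takes $O\parens{\Delta N (\log \log N)^2}$ time by \Cref{lem:small-sieve}, which already matches the target bound. Everything else must be shown to fit inside the same budget.

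Second, I would bound the construction cost of the look-up tables. Tables are built only for the first $O(\log \log N)$ segments (later segments only retain numbers with $\omega(n)>4 \log_2 \log_2 N$, which are handled separately). By \Cref{eq:table-size-general} together with the estimate $\binom{n}{k}=\parens{\Theta(n/k)}^k$, each table has $\exp\parens{O(\log^2 \log N)}$ entries, so the total construction time over all segments is $N^{o(1)}$, asymptotically negligible. Populating each entry reduces to iterating over the corresponding approximated multi-set of primes and checking \Cref{eq:table-div-criterion}; the multi-set is short (at most $4 \log_2 \log_2 N$ primes) so this stays inside the same $N^{o(1)}$ envelope.

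Third, for each $n$ in the first $O(\log \log N)$ segments of the critical interval with $\omega(n)\le 4\log_2 \log_2 N$, we form its approximated-factorization key in $O(\omega(n))=O(\log\log N)$ time and use it to retrieve a precomputed contribution in the same time (since the key has that length). The number of such $n$ is $O(\Delta N \log\log N)$ (there are $O(\log\log N)$ segments of size $O(\Delta N)$), so the total lookup work is $O\parens{\Delta N (\log\log N)^2}$, matching the sieve bound. The two remaining pieces are the tail in $\omega(n)$ and the expected undetermined-divisor work. For the tail, \Cref{lem:bound-num-div-on-high-omega} with $k=4\log_2\log_2 N$ bounds the total square-free-divisor work on $\omega(n)>4\log_2\log_2 N$ by $O\parens{S \log^3 N / 2^{4\log_2\log_2 N}}=O\parens{S/\log N}$, which by \Cref{lem:smaller-critical-interval} is $O(\Delta N)$. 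For the undetermined divisors, the analysis preceding the lemma, with $\epsilon=O\parens{1/(\log^2 N \log\log N)}$, gives expected work $O\parens{\epsilon \Delta N \log N \log\log N}$ per segment, so summed over $O(\log\log N)$ segments this is $O\parens{\Delta N \log\log N/\log N}$. Both are absorbed into $O\parens{\Delta N (\log\log N)^2}$.

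The main obstacle is the expectation bound on the number of undetermined divisors: it is what forces the randomized setting, since it uses that $1/\Delta$ is uniform on an interval of length $1$ so that the fractional part appearing in \Cref{eq:table-div-criterion} avoids a small neighborhood of each integer with probability $O(\epsilon \log\log N)$. Once that expectation bound is in hand, summing the five contributions (sieve, table construction, key+lookup, tail in $\omega(n)$, undetermined divisors) gives the claimed $O\parens{\Delta N (\log\log N)^2}$ expected runtime.
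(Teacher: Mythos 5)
Your proposal is correct and takes essentially the same approach as the paper's proof: both gather the sieve cost from \Cref{lem:small-sieve}, argue that table construction is sub-polynomial, bound the per-number key-formation and lookup at $O(\log\log N)$ over the $O(\Delta N \log\log N)$ numbers produced by the restricted sieve, and show that the high-$\omega$ tail and the expected undetermined-divisor work are lower-order terms. Your write-up is merely more explicit in tracking the undetermined-divisor contribution per segment and in justifying why only the first $O(\log\log N)$ segments need tables.
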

\begin{proof}
We construct a look-up table for numbers $n$ with $\omega(n) \le 4 \log_2 \log_2 N$, as described. We have seen that the total work for creating the table is negligible, since its size is sub-polynomial (and hence the table can be created by a simple brute-force). We iterate only on numbers obtained by the sieve, which in \Cref{sec:sieve-less} was bounded by $O(\Delta N \log \log N)$, and for each such number we use inspect the relevant entry in the look-up table in $O(\log \log N)$ time (the time required for computing the corresponding key). We have already seen that the remaining work on the undetermined divisors is negligible, as well as the work on numbers with more than $4 \log_2 \log_2 N$ prime factors, and hence the total error correction time is dominated by $O\parens{\Delta N \log \log N)} \cdot O\parens{\log \log N}$.
\end{proof}

\subsection{Optimized complexity}\label{sec:complexity}

\begin{theorem}\label{thm:pi-logn}
$\pi(N)$ can be computed in $O\parens{\sqrt N \log N (\log \log N)^{3/2}}$ time in expectation.
\end{theorem}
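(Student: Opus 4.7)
The plan is to combine the three optimized components developed in this section and choose $\Delta$ to balance them. By \Cref{lem:hmu-log2}, computing $\bone \Conv \hmu$ and its partial sum takes $O\parens{\sqrt{N}\log\log N + \frac{\log^2 N \log\log N}{\Delta}}$ time. By \Cref{lem:small-sieve}, the sieving phase takes $O\parens{\Delta N (\log\log N)^2}$ time. By \Cref{lem:ec-using-tables}, the error correction takes $O\parens{\Delta N (\log\log N)^2}$ time in expectation. Adding $O(\sqrt N \log \log N)$ for locating primes up to $\sqrt{N}$ and noting that, as in \Cref{thm:pi-log3}, the value of $\pi(\sqrt N)$ is obtained directly from the sieve of primes below $\sqrt{N}$, the total expected running time is
\[
O\parens{\sqrt{N}\log\log N + \frac{\log^2 N \log\log N}{\Delta} + \Delta N (\log\log N)^2}.
\]

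The first term is dominated by the last (for any reasonable $\Delta$), so I would optimize over the remaining two terms by balancing $\frac{\log^2 N \log\log N}{\Delta}$ with $\Delta N (\log\log N)^2$. This yields $\Delta^2 = \frac{\log^2 N}{N \log\log N}$, i.e. the choice $\Delta = \Theta\parens{\frac{\log N}{\sqrt{N \log\log N}}}$. Substituting back, each of the balanced terms becomes $\sqrt{N}\log N (\log\log N)^{3/2}$, giving the claimed bound. I would also verify that this choice of $\Delta$ satisfies the assumption $\Delta = o(1/\log N)$ required by \Cref{lem:critical-interval}, which it does since $\Delta = O(\log N / \sqrt N) \ll 1/\log N$.

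There is essentially no hard step: all the nontrivial content has been isolated into the preceding lemmas, and this is only a balancing argument. The only point requiring a moment of care is verifying that the randomization used in \Cref{lem:ec-using-tables} is consistent with the chosen $\Delta$: the proof of \Cref{lem:ec-using-tables} samples $1/\Delta$ uniformly from an interval of length $1$, which is compatible with our deterministic target value of $1/\Delta = \Theta(\sqrt{N \log\log N}/\log N)$ up to a lower-order perturbation, so the asymptotics are unaffected. This yields the theorem.
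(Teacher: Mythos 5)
Your proof is correct and matches the paper's argument essentially line for line: you combine \Cref{lem:hmu-log2}, \Cref{lem:small-sieve}, and \Cref{lem:ec-using-tables} to get the same total bound $O\parens{\sqrt N \log \log N + \frac{\log^2 N \log \log N}{\Delta} + \Delta N (\log\log N)^2}$, and balance with $\Delta = \Theta\parens{\frac{\log N}{\sqrt{N \log\log N}}}$. The extra checks you add — that $\Delta = o(1/\log N)$ holds and that the randomized choice of $1/\Delta$ from a length-$1$ interval is compatible with the target order of $\Delta$ — are sensible sanity checks, though the paper leaves them implicit.
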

\begin{proof}
We gather the FFT time given in \Cref{lem:hmu-log2}, the divisor iteration time of \Cref{lem:ec-using-tables} and the sieving time of \Cref{lem:small-sieve} to obtain the total time complexity of
\begin{equation}
O\parens{\sqrt N \log \log N + \frac{\log^2 N \log \log N}{\Delta} + \Delta N (\log\log N)^2}
\end{equation}
This is minimized for $\Delta = \Theta \parens{ \frac{\log N}{\sqrt {N\log\log N}} }$ obtaining the required total time complexity.
\end{proof}

Note that the randomness needed for the guarantees of \Cref{lem:ec-using-tables} for using the look-up tables may be avoided by using \Cref{lem:error-correction-logn} instead for divisor iteration. This yields a deterministic time complexity of $O\parens{\Delta N (\log N)^{3/2} \log \log N}$.

\subsection{Algorithm summary}

We summarize here the algorithm described up to this point. We first find the primes up to $\sqrt N$ using a sieve, and then partition them into $\log \log N$ disjoint subsets according to their size, such that the $m$-th subset ($1\le m \le \log_2 \log_2 N$) includes primes $p$ with $\frac{\log_2 N}{2^{m+1}} \le \log_2 p < \frac{\log_2 N}{2^{m}}$ (see \Cref{sec:partition-primes}). In each subset,  we round all primes in log-scale to multiples of $\Delta = \Theta \parens{ \frac{\log N}{\sqrt {N\log\log N}} }$ (which was chosen as the inverse of a number drawn uniformly from an interval of length 1, to enjoy the proven guarantees of \Cref{sec:lut}), and count the number of primes in each such logarithmic interval to form an array of size $O\parens{\frac{r_\text{max} \log p_\text{max}}{\Delta}} = O\parens{ \sqrt{N \log \log N}}$. We apply FFT to each of these $\log \log N$ arrays, and from there we are able to compute, one Fourier coefficient at at time, the $\widetilde{C_r}$ arrays for all relevant values of $r$. This is done by the exponentiation-of-power-series method as described in \Cref{sec:better-better-newton}.

For each subset of primes, we combine its $\widetilde{C_r}$'s with alternating signs (using \Cref{eq:mu-hat-newton}) to form a partial $\hat{\mu}$ for these primes.

We then perform an inverse FFT on each of these $\log_2 \log_2 N$ results, truncate the resulting arrays at $\kbar(N)=O\parens{\sqrt{N \log \log N}}$, and convolve to obtain $\hmu$.

We then convolve $\hmu$ and $\bone$ (the array that counts the number of integers in each segment), and sum the entries of the resulting array up to $\kbar(N)$. We now almost have our result, and only need to cancel out the error term described in \Cref{thm:error-term}.

Before proceeding to the error correction phase we construct a table of size $\exp(O((\log\log N)^2))$ for each segment (construction time is similar to the size of the table), accounting for all divisors of numbers $n$ with $\omega(n)\le 4 \log_2 \log_2 N$ (\Cref{sec:div-table}). We then sieve numbers $n$ in the first $18 \log_2 \log_2 N$ segments: $\kbar(n) \in [\kbar(N), \kbar(N) + 18 \log_2 \log_2 N$]. The rest of the critical interval is sieved segment-by-segment, where in each segment the value of $k = \kbar(n)- \kbar(N)$ is constant. In each such segment we sieve numbers that are multiples of $d$ for all $d \le N ^ {1/6}$ with $\omega(d) \ge k/6$. We thus obtained the factorization of all numbers that may contribute to the error term.

For each number with $\omega(n)\le 4 \log_2 \log_2 N$, we query the look-up table using its factorization, remove the output value from the result, and iterate over the remaining undetermined divisors (which are also part of the table's output). For each such undetermined divisor $d$ of $n$, we check if $\kbar(n/d) + \khat(d) \le \kbar(N)$, if $\pmax(d)\le\sqrt N$, and if $d$ is square-free. If all these conditions are satisfied, we remove $\mu_{\le \sqrt N}(d)=(-1)^{\omega(d)}$ from the result. For numbers with $\omega(n) > 4 \log_2 \log_2 N$ we skip the table and treat all divisors as undetermined.

Finally, we add $\pi(\sqrt N) - 1$ to obtain $\pi(N)$ according to \Cref{lem:pi-relation}.

\section{Space Improvements}\label{sec:space-improvement}
The space complexity of the algorithm described so far is $\tilde{O}(\sqrt N)$. There are two such memory requirements:
\begin{enumerate}
  \item We apply FFT on arrays of size
  $O \parens{ \frac{\log N}{\Delta} } = 
  \tilde{O} \parens{ \sqrt N }$.
  \item We sieve a segment of size
  $S=O\parens{\Delta N \frac{\log N}{\log \log N}} =
  \tilde{O} \parens{ \sqrt N }$. 
\end{enumerate}

In this section we explain how to reduce the space complexity from $\tilde{O}(\sqrt N)$ to $\tilde{O}(\sqrt[3]{N})$ by addressing each of the above issues. We are aware that some of the improvements presented in this section may be incompatible with time improvements presented in \Cref{sec:time-improvement}. However, for larger values of $N$ we assume that the reduction in memory will be preferable even at the cost of logarithmic factors in time. Thus, we ignore logarithmic factors in this section.

\subsection{Using fewer exact primes}
Convolving huge arrays is essential to our algorithm. Our idea on how to avoid storing those huge arrays starts with replacing $\hmu$ with $\hmt$. We will explain in the next subsection how this improves memory. Before that, let us explain how we can obtain the desired result using $\hmt$ rather than $\hmu$.

Originally, we used $\bone\Conv\hmu$ to count numbers $\le N$ with no prime factors smaller than $\sqrt N$, up to segmentation errors. Instead, $\bone\Conv\hmt$ corresponds to numbers with no prime factors smaller than $\sqrt[3] N$. These include the desired prime numbers up to $N$, but also products of two prime numbers larger than $\sqrt[3] N$, as well as the number $1$. The number $1$ is easy to remove by computing $A = \bone\Conv\hmt - \delta_0$ (recall that $\delta_k$ is the array with zeros everywhere except for a single $1$ at the $k$-th cell). We are left with the task of removing products of two primes from $A$. 

The trick is to convolve $A$ with itself, truncating at $\kbar(N)$. We obtain the multiset of products $\{\{ \kbar(p_i p_j) \}\}$ for $p_i$ and $p_j$ larger than $\sqrt[3] N$. Thus $A \Conv A$ counts twice each product of two different primes, and once each square of a prime. By computing $A - \frac{1}{2} (A \Conv A)$ and summing up to $\kbar(N)$ we obtain the following contributions:

\begin{enumerate}
  \item Prime numbers in $\left( \sqrt[3] N, N \right]$, each contributing $1$ to the sum. This is the desired result, up to primes $\le \sqrt[3] N$ that were already found.
  \item Squares of primes in $\left( \sqrt[3] N, \sqrt N \right]$, each contributing $1$ to $A$ and $1$ to $A \Conv A$, totaling for $\frac{1}{2}$ the number of primes in this range. This contribution should be canceled by counting the number of primes up to $\sqrt N$.
  \item Segmentation errors, since we use our approximate array convolution technique rather than the accurate Dirichlet convolution.
\end{enumerate}

We note that as before, we need an error correction phase where wrongly accumulated contributions are canceled. Since we now have a term
\[ \bone\Conv\bone\Conv\hmt\Conv\hmt , \]
we will have to iterate through factorization of each number $n=d_1 d_2 d_3 d_4$ and subtract $\one(d_1) \one(d_2) \mu_{\le \sqrt[3] N}(d_3) \mu_{\le \sqrt[3] N}(d_4)$ if it was accumulated, that is if $\kbar(d_1) + \kbar(d_2) + \khat(d_3) + \khat(d_4) \le \kbar(N)$. This can be done without actually iterating all factorizations, but as mentioned, we ignore factors of $\log N$ in the complexity at this point. Hence, a direct application of \Cref{lem:sum-small-interval-bound} is enough to bound the required time.

Generalizing, we have the following identity, which we will later use to compute $\pi(N)$ with space complexity $\tilde{O}(N^{1/4})$.

\begin{lemma}\label{lem:ln-identity}
Let $t\ge 1$ be a natural number, and let $f=(\one\Conv \mu_{\le N^{1/t}}) - \delta_1$. Denote by $f^{\Conv k}$ the $k$-th Dirichlet convolution of $f$ with itself. That is, $f^{\Conv 1}=f$ and $f^{\Conv (k+1)}=f^{\Conv k}\Conv f$.
Then:

\[ \pi(N) = \sum_{n=1}^{N} \sum_{k=1}^{t-1} \frac{(-1)^{k-1}}{k}f^{\Conv k}(n) + \pi(N^{1/t}) - \sum_{k=2}^{t-1} \frac{1}{k} \parens{\pi(N^{1/k})  - \pi(N^{1/t})}\]
\end{lemma}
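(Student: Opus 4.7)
The plan is to recognize $f$ as a sieve-indicator, identify $\sum_{k\ge 1}\tfrac{(-1)^{k-1}}{k}f^{\Conv k}$ as a restricted version of $\log\zeta$ via the formal Dirichlet-series identity $\log(1+X)=\sum_{k\ge 1}(-1)^{k-1}X^k/k$, and exploit that high-order convolutions vanish on $\{1,\dots,N\}$. First, by the same reasoning as in the discussion preceding \Cref{lem:pi-relation}, the convolution $(\one\Conv\mu_{\le N^{1/t}})(n)$ equals $1$ when every prime factor of $n$ exceeds $N^{1/t}$ and $0$ otherwise; subtracting $\delta_1$ removes the $n=1$ contribution, so $f(n) = [\,n>1\text{ and }\pmin(n)>N^{1/t}\,]$. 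If $f^{\Conv k}(n)\ne 0$ then $n=d_1\cdots d_k$ with each $d_i>N^{1/t}$, hence $n>N^{k/t}$; in particular $f^{\Conv k}(n)=0$ for $n\le N$ and $k\ge t$, so the truncation at $t-1$ in the statement is exact:
\[ \sum_{k=1}^{t-1}\frac{(-1)^{k-1}}{k}f^{\Conv k}(n) \;=\; \sum_{k=1}^{\infty}\frac{(-1)^{k-1}}{k}f^{\Conv k}(n) \qquad (1\le n\le N), \]
and the right-hand sum is finite pointwise (only $k\le \log_2 n$ contribute).

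Next, I would establish a pointwise log identity. Viewing $F(s)=\sum_{n\ge 2}f(n)n^{-s}$ as a formal Dirichlet series (it lies in the maximal ideal since $f(1)=0$), unique factorization of integers with $\pmin>N^{1/t}$ gives the Euler product
\[ 1+F(s) \;=\; \prod_{p>N^{1/t}}\bigl(1-p^{-s}\bigr)^{-1}, \]
valid as a formal identity because each coefficient of $n^{-s}$ uses only finitely many factors. Applying the formal logarithm coefficient by coefficient,
\[ \sum_{k=1}^{\infty}\frac{(-1)^{k-1}}{k}F(s)^k \;=\; \log\!\prod_{p>N^{1/t}}\bigl(1-p^{-s}\bigr)^{-1} \;=\; \sum_{p>N^{1/t}}\sum_{m\ge 1}\frac{p^{-ms}}{m}, \]
so reading off the coefficient of $n^{-s}$ yields $\sum_{k\ge 1}\tfrac{(-1)^{k-1}}{k}f^{\Conv k}(n) = \tfrac{1}{m}$ when $n=p^m$ with $p>N^{1/t}$, and $0$ otherwise.

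Finally, I would sum this identity over $n=1,\dots,N$. Combining with the exactness of the $k\le t-1$ truncation,
\[ \sum_{n=1}^N\sum_{k=1}^{t-1}\frac{(-1)^{k-1}}{k}f^{\Conv k}(n) \;=\; \sum_{m\ge 1}\frac{1}{m}\,\bigl|\{p:N^{1/t}<p\le N^{1/m}\}\bigr| \;=\; \sum_{m=1}^{t-1}\frac{1}{m}\bigl(\pi(N^{1/m})-\pi(N^{1/t})\bigr), \]
since for $m\ge t$ the interval $(N^{1/t},N^{1/m}]$ is empty. Isolating the $m=1$ summand gives $\pi(N)-\pi(N^{1/t})$, and solving for $\pi(N)$ produces the claimed identity. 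The main obstacle is the formal log identity in the middle step; however, since $F$ has vanishing constant term and each $[n^{-s}]F(s)^k$ is zero for $k>\log_2 n$, no analytic convergence is required and the whole argument reduces to a routine manipulation of formal Dirichlet series.
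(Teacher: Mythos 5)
Your proof is correct and follows essentially the same route as the paper's: characterize $f$ as the indicator of $n>1$ with $\pmin(n)>N^{1/t}$, pass to the formal Dirichlet series and its Euler product, apply the formal logarithm to identify the coefficient sequence as $\tfrac{1}{m}$ on prime powers $p^m$ with $p>N^{1/t}$, justify the truncation at $k\le t-1$ since $f^{\Conv k}$ vanishes on $[1,N]$ for $k\ge t$, and sum over $n\le N$. The only cosmetic difference is notation ($F(s)$ vs.\ $D_f(s)$) and that you spell out the final rearrangement to isolate $\pi(N)$ which the paper leaves as "readily follows."
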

\begin{proof}
The function $f$ is the indicator function of integers $>1$ whose prime factorization contains only primes larger than $N^{1/t}$. 

For a function $g$ we denote by $D_g$ the Dirichlet series of $g$:

\[ D_g(s)=\sum_{n=1}^\infty \frac{g(n)}{n^s}\]

Recall that $D_{g_1 \Conv g_2} = D_{g_1}\cdot D_{g_2}$ for any two functions $g_1, g_2$.

For our $f$, we have $D_f(s)=\sum_{n>1} \frac{1}{n^s}$, where $n$ iterates over numbers divisible only by primes above $N^{1/t}$. Observe that

\[ D_f(s) = -1 + \prod_{p>N^{1/t}} \frac{1}{1-p^{-s}} \]

Let $g=\sum_{k=1}^{\infty} \frac{(-1)^{k-1}}{k}f^{\Conv k}$. For $n \le N$ it coincides with $\sum_{k=1}^{t-1} \frac{(-1)^{k-1}}{k}f^{\Conv k}$, since $f^{\Conv t}$ is zero at all integers $\le N$. We have:

\begin{align*}
D_g(s) &= \sum_{k=1}^\infty \frac{(-1)^{k-1}}{k} D_f(s)^k = \ln\parens{1+D_f} = \ln\parens{\prod_{p>N^{1/t}} \frac{1}{1-p^{-s}}} \\ &= -\sum_{p>N^{1/t}} \ln (1-p^{-s}) = \sum_{p>N^{1/t}} \sum_{k=1}^\infty \frac{1}{k}p^{-sk}
\end{align*}

That is, $g$ is supported on prime powers of all primes above $N^{1/t}$, with $g(p^k)=\frac{1}{k}$ on such prime powers.

Hence,

\[ \sum_{n=1}^N g(n) = \sum_{k=1}^\infty \frac{1}{k}\sum_{N^{1/t}<p\le N^{1/k}} 1 = \sum_{k=1}^{t-1} \frac{\pi(N^{1/k})-\pi(N^{1/t})}{k} \]

The lemma's statement readily follows.
\end{proof}

As a side note, observe that in the limit of $t\to\infty$, \Cref{lem:ln-identity} gives:

\begin{equation}\label{eq:ln-identity-limit}  \sum_{n=1}^{N} \parens{\sum_{k=1}^{\infty} \frac{(-1)^{k-1}}{k}(\one - \delta_1)^{\Conv k}}(n) = \sum_{k=1}^{\infty} \frac{\pi(N^{1/k})}{k} .
\end{equation}

This is equivalent (after a Mellin transform) to Riemann's identity $\frac{\ln \zeta(s)}{s} = \int_0^\infty \pi^*(x)x^{-s-1}dx$ with $\pi^*(x)$ defined as $\pi^*(x)=\sum_{k=1}^\infty \frac{\pi(x^{1/k})}{k}$.

\subsection{Working only in Fourier space}

As described, the general idea is to compute $\sum_{k=0}^{\kbar(N)} (A - \frac{1}{2}(A*A))[k]$, then combine with knowledge on primes $\le \sqrt N$ and an error correction phase, to finally obtain $\pi(N)$. But we still need to explain how to compute this sum without storing the whole array $A$ in memory at once.

The idea is to work only in Fourier space. Denote by $\zeta_L$ a primitive root of unity of order $L$. For an array $B$ of length $L$, we have its Fourier transform $\tilde{B}$. Instead of computing the inverse Fourier transform
\[ B[k] = \frac{1}{L} \sum_{\ell=0}^{L-1} \tilde{B}[\ell]\zeta_L^{k\ell} \]
we compute the output sum directly
\[ L \sum_{k=0}^{n} B[k] = \sum_{k=0}^{n} \sum_{\ell=0}^{L-1} \tilde{B}[\ell]\zeta_L^{k\ell} \]

\[ = \sum_{\ell=0}^{L-1} \tilde{B}[\ell] \sum_{k=0}^{n} \zeta_L^{k\ell}= (n+1)\tilde{B}[0] + \sum_{\ell=1}^{L-1} \frac{\zeta_L^{(n+1) \ell} - 1}{\zeta_L^\ell - 1}  \tilde{B}[\ell]. \]

That is, we have shown:

\begin{lemma}\label{lem:prefix-sum-from-fourier}
For an array $B$ of length $L$ and its Fourier transform $\tilde{B}$, the following holds:
\[  L \sum_{k=0}^{n} B[k] = (n+1)\tilde{B}[0] + \sum_{\ell=1}^{L-1} \frac{\zeta_L^{(n+1)\ell} - 1}{\zeta_L^\ell - 1}  \tilde{B}[\ell] . \]
\end{lemma}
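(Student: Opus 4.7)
The plan is a direct computation that essentially just reverses the order of summation in the inverse Fourier transform formula and then evaluates the resulting inner geometric sum in closed form.

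First I would start from the inverse Fourier transform expressing each $B[k]$ in terms of $\tilde{B}$:
\[ B[k] = \frac{1}{L}\sum_{\ell=0}^{L-1} \tilde{B}[\ell]\,\zeta_L^{k\ell}. \]
Multiplying by $L$ and summing over $k=0,\dots,n$, Fubini gives
\[ L\sum_{k=0}^{n} B[k] = \sum_{\ell=0}^{L-1} \tilde{B}[\ell] \sum_{k=0}^{n} \zeta_L^{k\ell}. \]

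Next I would evaluate the inner sum $\sum_{k=0}^{n}\zeta_L^{k\ell}$ by splitting off the $\ell=0$ term, which contributes $n+1$, and handling the remaining $\ell\in\{1,\dots,L-1\}$ via the geometric series formula: since $\zeta_L^\ell\neq 1$ for such $\ell$, we have
\[ \sum_{k=0}^{n} \zeta_L^{k\ell} = \frac{\zeta_L^{(n+1)\ell}-1}{\zeta_L^\ell - 1}. \]
Substituting back yields exactly the stated identity.

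The only place requiring any care is the separation between $\ell=0$ (where the denominator $\zeta_L^\ell-1$ vanishes and the inner sum degenerates to $n+1$) and $\ell\neq 0$ (where the geometric sum formula applies). There is no real obstacle here; the lemma is essentially a restatement of the inverse Fourier transform with the geometric series summed explicitly. The payoff, which justifies isolating this as a lemma, is structural rather than technical: it shows that one can compute the prefix sum $\sum_{k=0}^{n} B[k]$ directly from the Fourier coefficients $\tilde{B}[\ell]$ without ever materializing the array $B$ in the spatial domain, which is precisely what is needed to avoid the $\tilde{O}(\sqrt N)$ memory footprint of storing $A$ from the previous subsection.
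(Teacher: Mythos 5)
Your proof is correct and follows exactly the same route as the paper: expand each $B[k]$ via the inverse Fourier transform, swap the order of summation, and evaluate the inner sum as a geometric series after splitting off the $\ell=0$ term.
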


We can apply this lemma for any convolution we want to sum up to $\kbar(N)$, for example:

\begin{align*} & L \sum_{k=0}^{\kbar(N)} \parens{A - \frac{1}{2}(A*A)}[k] \\&= (\kbar(N)+1)\parens{\tilde{A}[0] - \frac{1}{2}\tilde{A}[0]^2} + \sum_{\ell=1}^{L-1} \frac{\zeta_L^{(\kbar(N)+1)\ell} - 1}{\zeta_L^\ell - 1} \parens{\tilde{A}[\ell] - \frac{1}{2}\tilde{A}[\ell]^2},
\end{align*}
where $L$ is the size of the arrays used.

Thus, as long as we are able to compute all Fourier coefficients $\tilde{A}[\ell]$ without much memory (which necessarily means not storing them all at once), we are still able to compute the approximate count, and in the same time and space complexity as for computing the $\tilde{A}[\ell]$ values.

Recall that $A = \bone \Conv \hat\mu_{\le N^{1/t}} - \delta_0$. Since $\tilde{\delta}_0$ is a constant array (evaluations of a constant polynomial), in order to compute $\tilde{A}[\ell]$ we only need to compute $\tilde{\bone}[\ell]$ and $\tilde{\hat{\mu}}_{\le N^{1/t}}[\ell]$ in the same index $\ell$.

As it turns out, we are able to compute the entries $\tilde{\hat{\mu}}_{\le N^{1/t}}[\ell]$ in $\tilde{O}(N^{1/t})$ space and $\tilde{O}(\sqrt N)$ time. However, reducing the space consumption of computing the entries $\tilde{\bone}[\ell]$ is more challenging, and is the reason we are not able to retain the $\tilde{O}(\sqrt N)$ time complexity for lower space complexities.

\subsection{Smaller FFTs for Möbius}\label{sec:small-fft-for-mobius}

The previous subsections showed that $\hat\mu_{\le N^{1/t}}$ suffices for our purposes. We now explain how we can compute it as well as the desired result with $\tilde{O}(N ^ {1/t})$ space.

We would like to work with the approach of a single FFT. That is, we do not use the prime partitioning improvement, and we pad the FFT arrays enough so we can apply Newton's identities entirely in the Fourier space. This way, the entire computation can be carried on each Fourier coefficient independently with $O(r_\text{max})$ memory. We are left with the problem of entering the Fourier space, which naively requires storing the entire array.

Our algorithm requires the Fourier transform of the array that corresponds to primes $\le N^{1/t}$. In line with the previous sections, we denote this array and its Fourier transform by $E_1$ and $\widetilde{E_1}$. We note that by definition:
 \[ E_1 = \sum_{p \le N^{1/t}} \delta_{\kbar(p)} \]

Observe that this array is very sparse. We now explain how to sequentially compute the Fourier transform of an array using memory proportional to its sparsity.

First, we let $L$ be the size of the array (which, as we recall, is on the order of $\tilde{\Theta}(\sqrt N)$). Write $L = L_0 L_1$ where $L_1$ is approximately equal to the sparsity of the input array, that is $L_1$ is approximately the number of primes $\le N^{1/t}$. Note that for FFT $L$ is usually a power of two, so such $L_1$ can be chosen up to a factor of two. Hence, we assume we can take $L_1=\tilde{\Theta}(N^{1/t})$.

\begin{lemma}\label{lem:low-mem-E1}
Given $L=L_0L_1$ where $L_1=\tilde{\Theta}(N^{1/t})$, and a value $0\le \ell_0 \le L_0 - 1$, it is possible to compute all values $\widetilde{E_1}[\ell]$ satisfying $\ell\equiv 
 \ell_0 \pmod{L_0}$, in $\tilde{O}(N^{1/t})$ time and $\tilde{O}(N^{1/t})$ space.
\end{lemma}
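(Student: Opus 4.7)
The plan is to exploit two facts about $E_1$: first, as an array it is extremely sparse, having only $\pi(N^{1/t})=\tilde\Theta(N^{1/t})$ nonzero entries (one per prime $p\le N^{1/t}$, at position $\kbar(p)$); second, we only need the subset of DFT outputs at indices in a single arithmetic progression modulo $L_0$. These two facts together mean we should be able to fold the problem into a length-$L_1$ DFT on a small, sparsely-populated working array.

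Concretely, I would use the standard Cooley--Tukey-style decimation. Writing $\ell=\ell_0+L_0 j$ and using that $\zeta_L^{L_0}=\zeta_{L_1}$ is a primitive $L_1$-th root of unity, one derives
\begin{equation*}
\widetilde{E_1}[\ell_0+L_0 j]=\sum_{k=0}^{L-1} E_1[k]\,\zeta_L^{k\ell_0}\,\zeta_{L_1}^{kj}=\sum_{r=0}^{L_1-1} b_r\,\zeta_{L_1}^{rj},
\end{equation*}
where the folded array of length $L_1$ is defined by
\begin{equation*}
b_r \;=\; \sum_{q\ge 0} E_1[qL_1+r]\,\zeta_L^{(qL_1+r)\ell_0}.
\end{equation*}
Thus the $L_1$ desired outputs are exactly the length-$L_1$ DFT of $b$.

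The algorithm is then immediate. First, initialize $b$ to zero (using $\tilde O(N^{1/t})$ space). Next, iterate over the primes $p\le N^{1/t}$; for each prime, compute $k=\kbar(p)$, compute the twiddle $\zeta_L^{k\ell_0}$ by fast modular exponentiation in $\tilde O(1)$ time, and add it to $b_{k\bmod L_1}$. Since there are $\tilde O(N^{1/t})$ primes, this phase costs $\tilde O(N^{1/t})$ time and uses only the $\tilde O(N^{1/t})$ space for $b$ and for the list of primes. Finally, apply an ordinary length-$L_1$ FFT to $b$ to read off all values $\widetilde{E_1}[\ell_0+L_0 j]$ in $\tilde O(L_1)=\tilde O(N^{1/t})$ additional time and space.

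The main subtlety I foresee is arranging the factorization $L=L_0 L_1$ so that $L_1$ is a valid FFT length (a power of two in the Boolean case, or a length supported by the chosen NTT modulus); since $L_1$ is only required to be $\tilde\Theta(N^{1/t})$ we have a constant factor of freedom to pick a suitable value. One should also verify that the primes can be enumerated in the given space budget — a segmented sieve up to $N^{1/t}$ easily fits within $\tilde O(N^{1/t})$ — but this is routine. The correctness of the folding identity, together with these bookkeeping points, gives the claim.
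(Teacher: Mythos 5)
Your proposal is correct and follows essentially the same route as the paper: both use the Cooley--Tukey decimation $\widetilde{E_1}[\ell_0+L_0\ell_1]=\sum_p \zeta_L^{\ell_0\kbar(p)}\bigl(\zeta_L^{L_0}\bigr)^{\ell_1\kbar(p)}$, exploit the sparsity of $E_1$ (one term per prime $p\le N^{1/t}$) to build a length-$L_1$ folded array indexed by $\kbar(p)\bmod L_1$, and then apply a single FFT of size $L_1$. Your $b_r$ is exactly the paper's inner sum, and the bookkeeping remarks about choosing a valid FFT length and enumerating primes are consistent with the paper's setup.
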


\begin{proof}
Let $\zeta_L$ be the fundamental root of unity of order $L$ used for FFT. The value of the Fourier transform at index $\ell$ is:
 \[ 
    \widetilde{E_1}[\ell] = 
    \sum_k E_1[k] \zeta_L ^ {\ell k} = 
    \sum_p \zeta_L ^ {\ell \kbar(p)}
  \]

Now write the index in the Fourier space as $\ell = \ell_0 + L_0\ell_1$ where $\ell,\ell_0,\ell_1$ are all non-negative integers and in addition $\ell < L, \ell_0 < L_0$ and $\ell_1 < L_1$. We argue that we can compute the Fourier coefficient at all indices with the same $\ell_0$ using an FFT of size $L_1$. To do so, fix $\ell_0$ and notice:
\[ 
\widetilde{E_1}[\ell_0 + L_0\ell_1] =  
\sum_p \zeta_L ^ {(\ell_0 + L_0\ell_1) \kbar(p)} = 
\sum_p \zeta_L ^ {\ell_0 \kbar(p)} \parens{ \zeta_L ^ {L_0} } ^ {\ell_1 \kbar(p)}
\]
Since $\ell_0$ is fixed, the final summation has the form of a Fourier transform of size $L_1$, since $\zeta_L ^ {L_0}$ is a root of unity of order $L_1$. Explicitly:

\begin{equation}\label{eq:low-mem-fft-formula-mobius}
\widetilde{E_1}[\ell_0 + L_0\ell_1] =
\sum_{m_1=0}^{L_1-1}  \parens{ \zeta_L ^ {L_0} } ^ {\ell_1 m_1}\sum_{\kbar(p)\equiv m_1 (\text{mod } L_1)} \zeta_L ^ {\ell_0 \kbar(p)}
\end{equation}

Hence, the algorithm is as follows. We partition the primes $\le N^{1/t}$ by the value of $\kbar(p) \bmod L_1$ and compute $\zeta_L ^ {\kbar(p)}$ for each prime. Then, for each $\ell_0$ up to $L_0$ construct an array of size $L_1$ where the $\ell_1$-th cell equals $\sum \zeta_L ^ {\ell_0 \kbar(p)}$, summing over primes with $\ell_1 \equiv \kbar(p) \pmod {L_1}$. This array can be constructed with time and space proportional to the number of primes. Then, apply FFT of size $L_1$ on this array. The result is the Fourier transform at indices $\ell  \equiv \ell_0 \pmod {L_0}$.
\end{proof}

As a corollary, we have:

\begin{lemma}\label{lem:low-mem-mobius}
Given $L=L_0L_1$ where $L_1=\tilde{\Theta}(N^{1/t})$, and a value $0\le \ell_0 \le L_0 - 1$, it is possible to compute all values $\widetilde{\hat{\mu}}_{\le N^{1/t}} [\ell]$ for $\ell\equiv 
 \ell_0 \pmod{L_0}$, in $\tilde{O}(N^{1/t})$ time and $\tilde{O}(N^{1/t})$ space.
\end{lemma}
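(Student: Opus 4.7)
The plan is to reduce this to \Cref{lem:low-mem-E1} by performing Newton's identities in Fourier space at each coordinate $\ell$ independently, following \Cref{sec:better-better-newton}. The key observation is that for each Fourier index $\ell$, the value $\widetilde{\hat{\mu}}_{\le N^{1/t}}[\ell]=\sum_r (-1)^r \widetilde{C_r}[\ell]$ is determined by $\widetilde{E_1}[\ell],\widetilde{E_2}[\ell],\ldots,\widetilde{E_{r_{\text{max}}}}[\ell]$ via the recursion \eqref{eq:newton-like-fourier}, so the whole computation decouples across Fourier coordinates and can be streamed one index at a time.

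First I would invoke \Cref{lem:low-mem-E1} to obtain the values $\widetilde{E_1}[\ell]$ needed as input. By \Cref{lem:er-e1-fft}, we have $\widetilde{E_{r'}}[\ell]=\widetilde{E_1}[r'\ell \bmod L]$, so the indices of $\widetilde{E_1}$ we need to consult lie in the residue classes $r'\ell_0 \bmod L_0$ for $r'=1,\ldots,r_{\text{max}}$. Applying \Cref{lem:low-mem-E1} to each of these $O(\log N)$ residue classes costs $\tilde{O}(N^{1/t})$ time and space in total, and the resulting $r_{\text{max}}$ arrays of length $L_1=\tilde{\Theta}(N^{1/t})$ fit within our space budget.

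Next, for each index $\ell\equiv \ell_0 \pmod{L_0}$ processed sequentially, I would read the values $\widetilde{E_{r'}}[\ell]$ for $r'=1,\ldots,r_{\text{max}}$ from the stored arrays, iterate the scalar recursion \eqref{eq:newton-like-fourier} to produce $\widetilde{C_1}[\ell],\ldots,\widetilde{C_{r_{\text{max}}}}[\ell]$ in $O(r_{\text{max}}^2)$ arithmetic operations and $O(r_{\text{max}})$ scratch memory, and combine via \eqref{eq:mu-hat-newton} to obtain $\widetilde{\hat{\mu}}_{\le N^{1/t}}[\ell]$, which is then emitted and discarded. Summing over the $L_1=\tilde{\Theta}(N^{1/t})$ relevant indices yields $\tilde{O}(N^{1/t})$ additional time, and the streaming nature keeps the working memory within $\tilde{O}(N^{1/t})$.

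The main obstacle is to ensure that the FFT length $L$ provides enough padding so that the untruncated Newton's identities in Fourier space behave correctly, i.e.\ without cyclic wrap-around of the underlying $C_r$ and $E_{r'}$ arrays. Since all primes involved are at most $N^{1/t}$, products of $r_{\text{max}}$ of them are at most $N^{r_{\text{max}}/t}$, so it suffices to choose $L \ge \kbar(N^{r_{\text{max}}/t})=\tilde{O}(\sqrt N)$. This choice constrains only $L_0$ and is compatible with $L_1=\tilde{\Theta}(N^{1/t})$, so the invocations of \Cref{lem:low-mem-E1} still run within their promised budget, giving the overall $\tilde{O}(N^{1/t})$ time and space bounds.
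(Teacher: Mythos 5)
Your proof is correct, and it reaches the stated bounds, but it takes a genuinely different route from the paper's. The paper's proof computes each $\widetilde{E_r}$ (for $r=1,\ldots,r_{\text{max}}$) \emph{directly} at the single residue class $\ell\equiv\ell_0\pmod{L_0}$, by applying the sparse-array technique of \Cref{lem:low-mem-E1} to each $E_r$ in turn --- which works because every $E_r$ has the same sparsity $\pi(N^{1/t})$ as $E_1$. You instead compute only $\widetilde{E_1}$, but at the $r_{\text{max}}$ residue classes $r'\ell_0\bmod L_0$, and then pull out $\widetilde{E_{r'}}[\ell]=\widetilde{E_1}[r'\ell\bmod L]$ via \Cref{lem:er-e1-fft}; your observation that $r'\ell\bmod L$ stays in the residue class $r'\ell_0\bmod L_0$ whenever $\ell\equiv\ell_0\pmod{L_0}$ (since $L_0\mid L$) is the key to making this work. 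Both routes cost $O(\log N)$ applications of \Cref{lem:low-mem-E1}, hence $\tilde{O}(N^{1/t})$ time and space, and both require the same padding $L\ge\kbar(N^{r_{\text{max}}/t})$ to keep the untruncated Newton recursion \eqref{eq:newton-like-fourier} free of cyclic wrap-around. The paper's version is a shade simpler to state (one residue class, $r_{\text{max}}$ sparse arrays), while yours reuses \Cref{lem:er-e1-fft} explicitly and makes the single-array streaming structure of the computation more transparent; the trade-off is purely expository.
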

\begin{proof}
As was done in \Cref{sec:better-newton} use \Cref{eq:newton-like-fourier} to compute all values of $\widetilde{C_r}$ in the relevant indices by first computing them for $\widetilde{E_r}$. Since all computations are element-wise in the Fourier space, it is indeed enough to compute only the subset of values for the $\widetilde{E_r}$ arrays. We can compute all $\widetilde{E_r}$ in a similar way to \Cref{lem:low-mem-E1} for the $O(\log N)$ relevant values of $r$.
\end{proof}

Next, we show that given $\ell_0$, we are also able to efficiently compute the values of $\widetilde{\bone}[\ell]$ in the same indices with low memory.

\subsection{Handling the \texorpdfstring{$\bone$}{one} array}

\begin{lemma}[informal]\label{lem:bone-space-time}
Given space $M = \tilde{\Omega}(1/{\sqrt \Delta})$ such that $M\le \sqrt{N}$, the Fourier transform $\tilde{\bone}$ of $\bone$ can be computed (in batches) in time complexity 
\[
\tilde{O}\parens{\frac{1}{\Delta^{3/2} M^{1/2}}}
\]
\end{lemma}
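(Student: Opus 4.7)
The plan is to apply a memory-bounded variant of the four-step FFT, emitting the Fourier coefficients of $\bone$ in batches rather than materializing the whole transform. Observe first that any individual entry $\bone[k]=\lceil 2^{\Delta(k+1)}\rceil - \lceil 2^{\Delta k}\rceil$ is computable in $\tilde{O}(1)$ time directly from $k$ and $\Delta$, so the input array never has to be stored in full. Let $L=\kbar(N)+1=\tilde{\Theta}(1/\Delta)$ be the length of $\bone$. The assumption $M=\tilde{\Omega}(1/\sqrt{\Delta})$ translates to $M\ge \tilde{\Omega}(\sqrt L)$, so we may factor $L=L_0L_1$ with both factors $\le M$, and set $\omega=\zeta_L$.

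First I would rearrange $\bone$ as an implicit $L_0\times L_1$ matrix $B[k_0,k_1]=\bone[k_0 L_1+k_1]$, and use the identity
\[
\tilde{\bone}[\ell_0+L_0\ell_1]
=\sum_{k_1=0}^{L_1-1}\omega^{\ell_0 k_1}\zeta_{L_1}^{\ell_1 k_1}\,A[\ell_0,k_1],
\qquad
A[\ell_0,k_1]=\sum_{k_0=0}^{L_0-1}B[k_0,k_1]\zeta_{L_0}^{\ell_0 k_0}.
\]
For a fixed $\ell_0$, the corresponding batch of $L_1$ outputs is emitted by first filling the twiddled array $\omega^{\ell_0 k_1} A[\ell_0,k_1]$ for $k_1=0,\dots,L_1-1$, then applying a single length-$L_1$ FFT. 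The length-$L_1$ FFT requires only $O(L_1\log L_1)$ time and $O(L_1)\le O(M)$ space, and it is dominated by the rest of the analysis once the column sums are available.

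Second, and this is the heart of the bound, I would amortize the column-sum computations across batches. The direct implementation recomputes all $L_1$ column sums from scratch for each $\ell_0$, using $O(L)$ work per batch for a crude total of $\tilde{O}(L^2/M)=\tilde{O}(1/(\Delta^2 M))$; this already proves a weaker form of the lemma. To sharpen to $\tilde{O}(L^{3/2}/M^{1/2})=\tilde{O}(1/(\Delta^{3/2}M^{1/2}))$ I would introduce an intermediate splitting $L_0=L_0'L_0''$ with $L_0''\approx \sqrt{L/M}$, and precompute, once up front, length-$L_0''$ partial DFTs for each column: for each $k_1$ and each block index $b$, store the length-$L_0''$ DFT of the corresponding slice of column $k_1$. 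By processing the precomputation column-chunk by column-chunk (a chunk of $M/L_0''$ columns at a time), the total workspace remains $\tilde{O}(M)$, while each batch can then assemble any $A[\ell_0,k_1]$ in only $O(L_0')$ twiddled combinations of cached partials, bringing the per-batch work down from $O(L)$ to $O(L_1 \cdot L_0')=O(\sqrt{LM})$. Summed over $L_0=L/M$ batches this yields the claimed $\tilde{O}(L/M \cdot \sqrt{LM})=\tilde{O}(L^{3/2}/M^{1/2})$, with the upfront precomputation cost being of the same order.

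The main obstacle is verifying that the precomputation and storage simultaneously fit in $\tilde{O}(M)$ memory while not exceeding the target time. This pins down the choice $L_0''\approx \sqrt{L/M}$, $L_1\approx M$ essentially uniquely, and requires checking that processing columns in chunks (rather than storing all partials at once) does not inflate the number of times the on-the-fly $\bone[k]$ values need to be recomputed. A secondary, but routine, point is that the roots of unity $\omega^{\ell_0 k_1}$, $\zeta_{L_0}^{\ell_0 k_0}$ and $\zeta_{L_1}^{\ell_1 k_1}$ appearing above must be generated incrementally to avoid extra $\log$ factors; since the NTT-based viewpoint adopted elsewhere in the paper lets each arithmetic operation cost $\tilde{O}(1)$, this is harmless. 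Once these book-keeping details are in place, adding up the contributions of the precomputation, the per-batch combinations, and the length-$L_1$ FFT gives the stated running time of $\tilde{O}(1/(\Delta^{3/2}M^{1/2}))$ in space $\tilde{O}(M)$.
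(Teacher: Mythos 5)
Your proposal treats the computation as a \emph{generic} memory-bounded FFT of a length-$L=\tilde\Theta(1/\Delta)$ array with on-the-fly input generation and batched output, whereas the paper's proof is fundamentally not a generic FFT: it exploits the special structure of $\bone$. This is the central gap. The paper splits $\bone$ into a sparse prefix (handled directly), a dense suffix (omitted from the transform entirely, with its contribution moved into a separate error-correction phase that uses the $\mu_{\le N^{1/t}}$ values directly), and a middle region in which $\kbar$ is \emph{perturbed} so that the nonzero indices of $\bone$ fall into exact arithmetic progressions over blocks of length $m\approx 1/\sqrt\Delta$. Each such progression contributes to $Z_{\ell_0}$ a rational function $p_t(y)/(1-y^{u_t})$, and these are merged by degree-balanced polynomial arithmetic. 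None of this structure appears in your argument, and without it there is no route to $\tilde O(L^{3/2}/M^{1/2})$: a straightforward two- or three-level Cooley--Tukey scheme with inputs that must be regenerated from scratch per batch and outputs that cannot be stored gives only $\tilde O(L^2/M)$. At $M=\sqrt L$ the crude bound is $\tilde O(L^{3/2})$, while the claimed bound is $\tilde O(L^{5/4})$; the difference is exactly what the structural trick buys.

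Concretely, the three-level sharpening you sketch does not go through. First, the precomputed ``length-$L_0''$ partial DFTs for each column and block'' total $L_1 L_0' L_0'' = L$ words, which does not fit in $M$ memory; chunking the precomputation over groups of columns does not help, because a single output batch $\ell_0$ needs contributions from \emph{all} columns, so the partials for earlier chunks would need to be regenerated by the time the batch is assembled, reintroducing $\Theta(L)$ work per batch. Second, even the granularity of the cache is wrong: for a fixed column $k_1$ and block $b$, the block's contribution to $A[\ell_0,k_1]$ is $\zeta_{L_0}^{\ell_0 b L_0''}\sum_{j<L_0''} a[bL_0''+j,k_1]\,\zeta_{L_0}^{\ell_0 j}$, and the inner sum depends on the \emph{full} batch index $\ell_0$ (through $\zeta_{L_0}^{\ell_0 j}$, which is an $L_0$-th root of unity, not an $L_0''$-th root), so a length-$L_0''$ DFT of the block is not the quantity you would need to cache. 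To reach the stated bound you must instead use the specific almost-periodicity of $\bone$, the perturbation $\kbar^{\mathrm{pert}}$ (together with the downstream error correction that licenses it), and the rational-function encoding of arithmetic progressions, as in \Cref{sec:perturbed-bone} onward.
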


In \Cref{sec:space-improvement-details} we sketch a proof of this lemma.

\subsection{Sieving small segments}\label{sec:sieve-small-segments}
For the error correction phase, we need to factorize the numbers in the interval $(N, N+S]$. Segmented sieve is usually used to reduce memory footprint, but since we must sieve primes up to $\sqrt{N + S}$, reducing the segment size below $\tilde{O}(\sqrt N)$ will affect time complexity.

We already mention in \Cref{sec:sqrt3-sieve} that \cite{improved_eratosthenes} offers an improved sieve algorithm that works with smaller segments, We apply this algorithm in order to factorize the numbers in the critical interval $(N, N+S]$ by iterating segments of size $\sqrt[3] N$. Note that reducing the memory footprint using \cite{improved_eratosthenes} adds an extra factor of $\log N$ to the time complexity.

It should be noted that, in theory, it is possible to complete the error correction phase in $O(S\cdot N^\epsilon)$ time and $\log^{O(1)}(N)$ space, by iterating over all numbers in the critical interval, factoring them with any sub-exponential factorization algorithms.

\subsection{Complexity analysis}
We assume space complexity $M$ and derive the time complexity by combining the methods presented so far.

\begin{theorem}\label{thm:pi-space-time-tradeoff}
Using $M$ memory, $N^{2/9} < M < N^{1/2}$, our algorithm computes $\pi(N)$ in $\tilde{O}\parens{\frac{N^{3/5 + \epsilon}}{M^{1/5}}}$ time for any fixed $\epsilon>0$. The corresponding optimal $\Delta$ is given by $\frac{1}{\Delta} = \tilde{\Theta}\parens{N^{2/5} M^{1/5}}$.

For $M \ge M^{1/3}$, the above time complexity can be achieved deterministically with $\epsilon = 0$.

In particular, one can compute $\pi(N)$ in:
\begin{itemize}
    \item $O\parens{N^{5/9+\epsilon}}$ time and $O\parens{N^{2/9+\epsilon}}$ space for any fixed $\epsilon>0$
    \item $\tilde{O}\parens{N^{8/15}}$ time and $\tilde{O}\parens{N^{1/3}}$ space
\end{itemize}
\end{theorem}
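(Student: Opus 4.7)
The plan is to combine all the space-saving techniques of Section \ref{sec:space-improvement} into a single algorithm parameterized by the memory budget $M$. Applying Lemma \ref{lem:ln-identity} with a small integer $t\ge 2$, I would reduce $\pi(N)$ to an alternating sum of Dirichlet powers $f^{\Conv k}$ for $1\le k\le t-1$, where $f=\one\Conv\mu_{\le N^{1/t}}-\delta_1$, together with prime-counting corrections at $N^{1/k}$ (handled recursively in negligible time). Working in segmented form, each $f^{\Conv k}$ becomes $A^{*k}$ with $A=\bone\Conv\hat{\mu}_{\le N^{1/t}}-\delta_0$, which in Fourier space is just $\widetilde{A}[\ell]^k$ pointwise.

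I would process the Fourier indices in batches, never materializing the full array. Within each batch: obtain the relevant Fourier entries of $\hat{\mu}_{\le N^{1/t}}$ using Lemma \ref{lem:low-mem-mobius} (setting $L_1=\tilde{\Theta}(N^{1/t})\le M$, contributing $\tilde{O}(1/\Delta)$ total over all batches); obtain the matching Fourier entries of $\bone$ via Lemma \ref{lem:bone-space-time} (contributing $\tilde{O}(1/(\Delta^{3/2}M^{1/2}))$ total); combine them pointwise to form $\widetilde{A}[\ell]$, evaluate the degree-$(t-1)$ polynomial $\sum_{k=1}^{t-1}\frac{(-1)^{k-1}}{k}\widetilde{A}[\ell]^k$, and accumulate its contribution to the target prefix sum via Lemma \ref{lem:prefix-sum-from-fourier}. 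Finally, I would run error correction on the critical intervals of each $k$-fold convolution using the $\sqrt[3]{N}$-segment sieve of Section \ref{sec:sieve-small-segments} and iterating over the relevant divisor tuples $n=d_1\cdots d_{2k}$.

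Collecting the three time contributions gives
\[
\tilde{O}\!\left(\tfrac{1}{\Delta}\right)+\tilde{O}\!\left(\tfrac{1}{\Delta^{3/2}M^{1/2}}\right)+\tilde{O}\!\left(\Delta N\right).
\]
For $M\le N^{1/2}$ the first term is dominated by the second, and balancing the second against the third yields $\Delta=\Theta(N^{-2/5}M^{-1/5})$, matching the claimed optimal value, with total time $\tilde{O}(N^{3/5}/M^{1/5})$. The lower bound $M\ge N^{2/9}$ comes directly from the hypothesis $M=\tilde{\Omega}(1/\sqrt{\Delta})$ of Lemma \ref{lem:bone-space-time}, which under the optimal $\Delta$ simplifies to $M^{9/10}\ge\tilde{\Omega}(N^{1/5})$. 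The corollaries are immediate substitutions: $M=N^{1/3}$ gives $\tilde{O}(N^{8/15})$ deterministically (with $t=3$ and the deterministic $\sqrt[3]{N}$-segment sieve), while $M=N^{2/9+\epsilon}$ gives $O(N^{5/9+\epsilon})$ using the sub-exponential factoring noted at the end of Section \ref{sec:sieve-small-segments} and absorbing the small polynomial loss into $\epsilon$.

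The main obstacle I anticipate is showing that the error-correction analysis developed in Sections \ref{sec:error-correction}--\ref{sec:fewer-div} for the single convolution $\bone\Conv\hmu$ extends cleanly to the higher powers $A^{*k}$ appearing here. The critical interval for each $k$-fold term still has size $\tilde{O}(\Delta N)$ by the same $\kbar$/$\khat$ inequality, but divisor iteration now runs over ordered factorizations $n=d_1\cdots d_{2k}$, whose count is governed by $\tau_{2k}(n)$. For $t$ a constant this is absorbed into polylogarithmic factors via a direct application of Lemma \ref{lem:sum-small-interval-bound} to $\tau_{2k}$, so the total error-correction time remains $\tilde{O}(\Delta N)$ as claimed.
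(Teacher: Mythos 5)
Your proof is correct and follows essentially the same approach as the paper's. You correctly identify the two dominant bottlenecks (the $\bone$ Fourier computation at $\tilde{O}(1/(\Delta^{3/2}M^{1/2}))$ from \Cref{lem:bone-space-time}, and the error correction at $\tilde{O}(\Delta N)$), balance them at $\Delta = \tilde{\Theta}(N^{-2/5}M^{-1/5})$, and derive the $M > N^{2/9}$ constraint from the hypothesis $M = \tilde{\Omega}(1/\sqrt{\Delta})$ of \Cref{lem:bone-space-time} — all matching the paper's reasoning, while filling in several details the paper leaves implicit, notably the $\tilde{O}(1/\Delta)$ cost of the Möbius Fourier entries (and its domination by the $\bone$ term when $M \le N^{1/2}$) and the verification that iterating ordered factorizations $n = d_1\cdots d_{2k}$ stays $\tilde{O}(\Delta N)$ for bounded $t$ via $\tau_{2k}$ and \Cref{lem:sum-small-interval-bound}.
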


\begin{proof}
Using \Cref{sec:sieve-small-segments}, the error correction time is $\tilde{O}(S) = \tilde{O}(\Delta N)$. This can be done deterministically for $M\ge N^{1/3}$. With less memory, error correction can be done using non-deterministic methods in $O(\Delta N^{1+\epsilon})$ time.

The other bottleneck of the algorithm is computing the Fourier transform of the $\bone$ array, which can be done in $\tilde{O}\parens{\frac{1}{\Delta^{3/2} M^{1/2}}}$ according to \Cref{{lem:bone-space-time}}.

Choosing $\Delta =\tilde{\Theta}\parens{N^{-2/5} M^{-1/5}}$ balances these bottlenecks, yielding the desired time complexity.
\end{proof}

We note that space complexity can be reduced to $O(N^{r})$ for $r<2/9$ with a different time-space trade-off, but we omit the details.

\section{Computing other number-theoretic functions}
\label{sec:other-functions}

\subsection{Sum of multiplicative function evaluated at primes}

Let $h$ be a completely multiplicative function. That is, $h(ab)=h(a)h(b)$ for any $a,b\in\NN$. Informally, we require that $h$ can be summed efficiently over intervals of integers. We show how our methods can be extended to efficiently computing $\sum_{p\le N} h(p)$.

We briefly discuss the changes to the prime-counting algorithm required when changing the $\one$ function to a different function $h$.

\begin{itemize}
    \item $\mu_p$ for each prime $p$ is now replaced by the function $\eta_p$ such that $\eta_p(1)=1$ and $\eta_p(p)=-h(p)$, and 0 otherwise. Put differently, $\eta_p=\mu_p\cdot h$. It follows that
    
    \[ \eta_{\le \sqrt N}(n) = \Conv_{p\le \sqrt N} \eta_p = h(n)\cdot \mu_{\le \sqrt N}(n) \]

    \item \Cref{lem:pi-relation} is replaced by the corresponding
    \begin{equation}\label{eq:pi-relation-generalized}
    \sum_{n=1}^N (h \Conv \eta_{\le \sqrt N})(n) = 1 + \sum_{\sqrt{N} < p \le N} h(p)
    \end{equation}

    The proof of \Cref{eq:pi-relation-generalized} is straightforward by expanding:

    \[ \sum_{n=1}^N (h \Conv \eta_{\le \sqrt N})(n) = \sum_{n=1}^N \sum_{d\mid n} h(n/d) \eta_{\le \sqrt N}(d) = \sum_{n=1}^N h(n) \sum_{d\mid n} \mu_{\le \sqrt{N}}(d) \]

    \item The equivalent of \Cref{eq:hmu-convolution} is
    \begin{equation}\label{eq:hmu-convolution-generalized}
    \hat{\eta}_{\le \sqrt N} = \Conv_{p \le \sqrt N} \bar{\eta}_{p} = \Conv_{p \le \sqrt N} \parens{ \delta_{0} - h(p)\delta_{\kbar (p)} }
    \end{equation}

    from which the generalization of of \Cref{thm:hmu-val} also follows:
    \begin{equation}\label{eq:hmu-val-generalization}
    \hat{\eta}_{\le \sqrt{N}}[k] = \sum_{n:\,\khat(n)=k} \eta_{\le \sqrt N}(n)
    \end{equation}

    \item The Newton identities discussed in \Cref{sec:newton-identities} are also naturally generalized to use

    \begin{equation}
        C_r[k] = \sum_{\substack{n: \khat(n)=k \\ \omega(n)=r \\ \mu_{\le \sqrt{N}}(n) \neq 0}} h(n)
    \end{equation}

    and similarly:
    \begin{equation}
        E_r[k] = \sum_{p: \khat(p^r)=k} h(p^r)
    \end{equation}

    With these definitions, \Cref{eq:newton-like} remains valid as-is.

    \item As for the error correction formula developed in \Cref{sec:error-correction}, we have the generalized:

    \begin{equation}\label{eq:error-term-generalized}
    \sum_{k=0}^{\kbar (N)} (\bar{h}\Conv \hat{\eta}_{\le \sqrt{N}})[k] - \sum_{n=1}^N (h \Conv \eta_{\le \sqrt N})(n) = \smashoperator[r]{\sum_{\substack{d_1d_2>N \\ \kbar (d_1)+\khat (d_2)\le \kbar(N)}}} \; h(d_1d_2)\cdot \mu_{\le \sqrt N}(d_2)
    \end{equation}

    where we have used the identity $h(d_1)\cdot \eta_{\le \sqrt N}(d_2) = h(d_1d_2) \mu_{\le\sqrt{N}}(d_2)$ which follows from the multiplicativity of $h$ and the definition of $\eta$. This expression for the error term follows from an analogous derivation done in \Cref{sec:error-correction}.
\end{itemize} 

Some, but not all, of the time improvements discussed in \Cref{sec:time-improvement} are also applicable for this generalized problem. For brevity, we omit the details, and ignore logarithmic factors. We have thus shown the following:

\begin{theorem}\label{thm:sum-generalized}
Let $h$ be a completely multiplicative function such that $\sum_{n=1}^m h(n)$ can be evaluated in $O(T)$ time for any $m \le N$.
Then $\sum_{p\le N} h(p)$ can be evaluated in $\tilde{O}(T\sqrt N)$ time.
\end{theorem}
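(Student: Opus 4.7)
The plan is to follow the three-stage structure used for $\pi(N)$: approximate the answer via a truncated convolution, correct the segmentation error, and combine with an auxiliary small-prime computation. The generalization bullets preceding the statement already supply the correct analogues: identity \Cref{eq:pi-relation-generalized} tells us that summing $\bar{h}\Conv\hat{\eta}_{\le\sqrt{N}}$ up to segment $\kbar(N)$ recovers $1+\sum_{\sqrt{N}<p\le N} h(p)$ up to the error term \Cref{eq:error-term-generalized}, and the Newton-identity derivation of \Cref{sec:newton-identities} remains intact once we redefine $C_r$ and $E_r$ using $h$.

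First I would compute $\bar{h}$. Writing $H(m)=\sum_{n=1}^m h(n)$, we have $\bar{h}[k]=H(\ceil{2^{(k+1)\Delta}}-1)-H(\ceil{2^{k\Delta}}-1)$, so each cell costs $O(T)$; with $\Delta=\tilde{\Theta}(1/\sqrt N)$ there are $\tilde{O}(\sqrt{N})$ cells, giving $\tilde{O}(T\sqrt{N})$ for this step. Next I would compute $\hat{\eta}_{\le\sqrt{N}}$ via the generalized Newton recursion: the $E_r$ array now carries $h(p^r)$ rather than $1$, but the time bounds of \Cref{lem:hmu-log4,lem:hmu-log3,lem:hmu-log2} depend only on FFT sizes, not on the specific entries, so this phase still runs in $\tilde{O}(\sqrt{N})$ (we also need the $h(p)$'s for primes $p\le\sqrt N$, obtainable from a sieve and $\omega(\sqrt N)$ evaluations of $H$ or, for typical $h$, from direct evaluation). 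Then an FFT convolution $\bar{h}\Conv\hat{\eta}_{\le\sqrt{N}}$, truncation at $\kbar(N)$, and summation yield the approximate answer in $\tilde{O}(\sqrt{N})$ time.

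For error correction I would sieve the critical interval $(N,N+S]$ of size $S=\tilde{O}(\Delta N)=\tilde{O}(\sqrt N)$ to factor each $n$, and for each square-free $\sqrt{N}$-smooth divisor $d_2$ of $n$ satisfying the segment inequality $\kbar(n/d_2)+\khat(d_2)\le\kbar(N)$, subtract $h(n)\mu(d_2)$, exactly as dictated by \Cref{eq:error-term-generalized}. Since the sieve produces the full factorization of each $n$, complete multiplicativity gives $h(n)$ in $\tilde{O}(1)$ time from the values $h(p)$, so the bounds of \Cref{lem:ec-sieve,lem:ec-div-iter} carry over verbatim up to logarithmic factors, contributing $\tilde{O}(\sqrt N)$. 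Finally I would add $\sum_{p\le\sqrt N} h(p)$, trivially computable once the primes up to $\sqrt N$ are sieved, and subtract the spurious $+1$ from \Cref{eq:pi-relation-generalized} to obtain $\sum_{p\le N} h(p)$.

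The sole source of the $T$ factor, and the main obstacle, is computing $\bar{h}$: we genuinely need partial sums of $h$ at $\tilde{\Theta}(\sqrt{N})$ thresholds, and no convolutional shortcut is available because the only handle on the generic multiplicative $h$ is the black-box evaluation of $H$. Every other phase avoids $H$ entirely — it manipulates the sparse data $\{h(p):p\le\sqrt N\}$ and the per-element factorizations on the critical interval — and therefore costs only $\tilde{O}(\sqrt N)$. Summing the three phases gives the claimed $\tilde{O}(T\sqrt N)$ bound.
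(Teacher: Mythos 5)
Your proposal is correct and matches the paper's own (sketched) proof: the paper's argument for \Cref{thm:sum-generalized} is precisely the bulleted list preceding it --- \Cref{eq:pi-relation-generalized}, \Cref{eq:hmu-convolution-generalized}, \Cref{eq:error-term-generalized}, and the $h$-weighted redefinitions of $C_r,E_r$ --- together with the remark that the time analyses of \Cref{sec:time-improvement} largely carry over, and you flesh those details out while correctly identifying the computation of $\bar{h}$ as the phase responsible for the factor of $T$.

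One small inaccuracy worth flagging: you assert that after the sieve, ``complete multiplicativity gives $h(n)$ in $\tilde{O}(1)$ time from the values $h(p)$,'' so that ``every other phase avoids $H$ entirely''; but an $n$ in the critical interval $(N,N+S]$ can have one prime cofactor $p>\sqrt{N+S}$ whose $h(p)$ is \emph{not} in your precomputed table $\{h(p):p\le\sqrt N\}$. So the error-correction phase does require further black-box queries to $H$ --- either $h(p)=H(p)-H(p-1)$ for that single large cofactor, or more simply $h(n)=H(n)-H(n-1)$ once per $n$. Since there are $\tilde{O}(\sqrt N)$ such $n$, this adds only $\tilde{O}(T\sqrt N)$ and the theorem's bound is unaffected, but the claim that only the $\bar{h}$ phase touches $H$ is not quite right as stated.
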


\subsection{Sum of primes}
An immediate corollary of \Cref{thm:sum-generalized} is that we can efficiently compute the sum of primes, and in fact any constant positive power of them:

\begin{corollary}\label{cor:sum-prime-powers}
For any integer $\ell\ge 0$, we can compute $\sum_{p\le N}p^{\ell}$ in $\tilde{O}\parens{\sqrt N}$ time.
\end{corollary}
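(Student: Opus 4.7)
The plan is a direct application of \Cref{thm:sum-generalized} with the completely multiplicative function $h(n) = n^\ell$. First I would verify the two hypotheses needed: complete multiplicativity is immediate since $(ab)^\ell = a^\ell b^\ell$, and the partial-sum hypothesis requires showing that $\sum_{n=1}^m n^\ell$ can be evaluated in time $T = O(1)$ (for fixed $\ell$), which will then yield the claimed $\tilde{O}(\sqrt N)$ bound.

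For the partial-sum step, I would invoke Faulhaber's formula: for any fixed nonnegative integer $\ell$, the sum $\sum_{n=1}^m n^\ell$ is a polynomial in $m$ of degree $\ell + 1$ whose coefficients are expressible in terms of Bernoulli numbers. Since $\ell$ is a fixed constant (not depending on $N$), the polynomial and its $\ell + 2$ coefficients can be precomputed once, after which each evaluation $m \mapsto \sum_{n=1}^m n^\ell$ takes $O(1)$ arithmetic operations on word-RAM integers of size $O(\ell \log N) = O(\log N)$. (For small cases this is just $\sum_{n=1}^m 1 = m$ when $\ell=0$, the well-known $m(m+1)/2$ when $\ell=1$, and so forth.) Hence $T = O(1)$ in the notation of \Cref{thm:sum-generalized}.

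Plugging $h(n) = n^\ell$ into \Cref{thm:sum-generalized} then immediately yields that $\sum_{p\le N} p^\ell$ is computable in $\tilde{O}(T \sqrt N) = \tilde{O}(\sqrt N)$ time, which is precisely the statement of the corollary. No new ideas beyond those already assembled in \Cref{thm:sum-generalized} are required; the only thing to check carefully is that the word size suffices to hold intermediate values, but since each term $p^\ell$ has $O(\ell \log N) = O(\log N)$ bits (as $\ell$ is fixed) and the partial sums have at most $O(\log N)$ additional bits, standard $\tilde{O}$ accounting absorbs this into the hidden logarithmic factors. There is no real obstacle here: the corollary is essentially a transcription of the theorem for a specific family of completely multiplicative functions whose prefix sums are given by a closed-form polynomial identity.
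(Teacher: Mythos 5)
Your proof is correct and takes essentially the same route as the paper, which simply declares the corollary ``immediate'' from \Cref{thm:sum-generalized} and leaves the verification implicit; you have supplied exactly that verification (complete multiplicativity of $n^\ell$ and $O(1)$-time prefix sums via a precomputed Faulhaber polynomial for fixed $\ell$). Nothing further is needed.
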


\subsection{Counting primes in residue classes}
We now show how to efficiently count, for any $r, m$, the number of primes $p\le N$ such that $p\equiv r \pmod{m}$, denoted $\pi(N, m, r)$.

To this end, we employ \Cref{thm:sum-generalized}. As a simple example, consider the function 
\[ h(n)=\begin{cases} 0 , & n\text{ is even} \\ 1, & n\equiv 1\pmod{4} \\ -1, & n\equiv 3 \pmod{4}\end{cases} \]

$h$ is completely multiplicative and $\sum_{n=1}^m h(n)$ can be computed in $O(1)$ standard operations for any $m$. Using \Cref{thm:sum-generalized} we can compute $\pi(N, 4, 1) - \pi(N, 4, 3)$ in $\tilde{O}(\sqrt N)$ time. Since counting primes up to $N$ recovers $\pi(N, 4, 1) + \pi(N, 4, 3)$, we can extract the individual values of $\pi(N, 4, 1)$ and $\pi(N, 4, 3)$.

More generally, for any constant $m$ we can compute the sum $\sum_{p\le N} \chi_k(p)$, where $\chi_k: \ZZ \to \CC$ is any of the $\varphi(m)$ Dirichlet characters\footnote{Practically, our algorithms always work with integers modulo $p$ for an appropriate prime $p$ that enables fast Fourier transforms. In this case, we add the constraint that $p\equiv 1\pmod{\varphi(m)}$, so there exists a primitive $\varphi(m)$-th root of unity, to replace $\CC$ with $\FF_p$.} of modulus $m$. Then, letting $S_k(N) = \sum_{p\le N} \chi_k(p)$, we have:

\[ S_k(N) = \sum_{p\le N} \chi_k(p) = \sum_{r'\in (\ZZ/m\ZZ)^\times} \chi_k(r') \cdot \pi(N, m, r') \]

After computing these sums for all values of $k$, we are able to extract $\pi(N, m, r)$ for all values of $r$, by

\begin{equation}\label{eq:count-residue-classes-lin-comp} \pi(N, m, r) = \frac{1}{\varphi(m)}\sum_{k} \chi_k(r)^{-1} S_k(N) \end{equation}

This gives an $\tilde{O}(\sqrt N)$-time algorithm for computing $\pi(N, m, r)$ for any constant $m, r$. We now analyze effect of $m$ on the complexity. Consider the error term given by \Cref{eq:error-term-generalized} for each $\chi_k$:

\begin{equation}
\text{error term}_k = \sum_{\substack{d_1d_2>N \\ \kbar (d_1)+\khat (d_2)\le \kbar(N)}} \chi_k(d_1d_2)\cdot \mu_{\le \sqrt N}(d_2)
\end{equation}

Hence, the combined error term obtained by computing the approximation of the right-hand side of \Cref{eq:count-residue-classes-lin-comp} is:

\[ \text{combined error} =  \frac{1}{\varphi(m)}\sum_{\substack{d_1d_2>N \\ \kbar (d_1)+\khat (d_2)\le \kbar(N)}} \mu_{\le \sqrt N}(d_2)\cdot \sum_{k} \chi_k(r)^{-1} \chi_k(d_1d_2)  \]

The inner sum is 0 for any $d_1d_2$ that is not congruent to $r$ modulo $m$, hence:

\begin{equation} \text{combined error} = \sum_{\substack{n>N \\ n\equiv_m r}}\sum_{\substack{d_1d_2 = n \\ \kbar (d_1)+\khat (d_2)\le \kbar(N)}} \mu_{\le \sqrt N}(d_2)
\end{equation}

In other words, most errors cancel out, and we only need to iterate over the factorization of numbers $n\equiv r \pmod{m}$. It follows that the error term can be computed in $\tilde{O}\parens{\sqrt{N} + \frac{N\Delta}{m}}$ time, since we sieve only $1/m$ of the critical interval with primes up to $\sqrt N$.

On the other hand, computing the approximations of all the $S_k$'s takes $\tilde{O}\parens{\frac{m}{\Delta}}$, since we perform the $\tilde{O}\parens{\frac{1}{\Delta}}$-time FFT-based approximation algorithm $m$ times. The total time is thus $\tilde{O}\parens{\sqrt{N} + \frac{N\Delta}{m} + \frac{m}{\Delta}}$, which is optimized at $\Delta=\frac{m}{\sqrt{N}}$, with this complexity analysis valid as long as $m = \tilde{O}(\sqrt N)$. Hence, we get:

\begin{corollary}\label{cor:count-residue-classes}
Given $N$ and two co-prime integers $r < m$, with $m=\tilde{O}(\sqrt{N})$, we can compute $\pi(N, m, r)$ in $\tilde{O}\parens{\sqrt N}$ time. Note that the time is independent of $m$.
\end{corollary}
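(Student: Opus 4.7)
The plan is to reduce counting primes in a residue class to summing a completely multiplicative function over primes, which we already know how to do via Theorem~\ref{thm:sum-generalized}, and then exploit character orthogonality to make the total cost independent of $m$. Concretely, for each Dirichlet character $\chi_k$ of modulus $m$, let $S_k(N)=\sum_{p\le N}\chi_k(p)$. By orthogonality of characters,
\[
\pi(N,m,r) \;=\; \frac{1}{\varphi(m)}\sum_{k=1}^{\varphi(m)} \chi_k(r)^{-1}\, S_k(N).
\]
So my first step is to compute each $S_k(N)$ using Theorem~\ref{thm:sum-generalized} with $h=\chi_k$ (a completely multiplicative function whose partial sums $\sum_{n\le m}\chi_k(n)$ are bounded by $\varphi(m)$ and can be evaluated in $O(1)$ after $O(m)$ preprocessing), and then assemble the linear combination above.

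The naive cost of this is $\tilde O(\varphi(m)\sqrt N)$, which is too large. The second step, and the crux of the argument, is to observe that when the per-character error terms of the form \eqref{eq:error-term-generalized} are combined with the weights $\chi_k(r)^{-1}/\varphi(m)$, orthogonality kicks in again: the sum over $k$ of $\chi_k(r)^{-1}\chi_k(d_1 d_2)$ vanishes unless $d_1 d_2\equiv r\pmod m$. So the aggregate error to correct is
\[
\frac{1}{\varphi(m)}\cdot \varphi(m)\!\!\!\!\sum_{\substack{n>N\\ n\equiv r\,(m)}}\;\sum_{\substack{d_1 d_2 = n\\ \kbar(d_1)+\khat(d_2)\le \kbar(N)}}\mu_{\le\sqrt N}(d_2),
\]
i.e.\ the error-correction phase only needs to handle the $\approx S/m$ numbers of the critical interval lying in the single residue class $r\pmod m$. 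Sieving and divisor-iterating only those numbers runs in $\tilde O(\sqrt N + N\Delta/m)$ time (the $\sqrt N$ being the one-time cost of listing the small primes, and the rest following as in Sections~\ref{sec:ec-sieve}--\ref{sec:ec-div-iter} restricted to an arithmetic progression).

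The FFT/Newton-identity phase, however, must be carried out once per character, at cost $\tilde O(1/\Delta)$ each, for a total of $\tilde O(\varphi(m)/\Delta)=\tilde O(m/\Delta)$. Thus the total time is $\tilde O\!\left(\sqrt N + m/\Delta + N\Delta/m\right)$. Balancing the two $\Delta$-dependent terms gives the choice $\Delta = m/\sqrt N$, at which both are $\tilde O(\sqrt N)$, yielding the claimed $\tilde O(\sqrt N)$ bound; this balancing is legitimate precisely when $m=\tilde O(\sqrt N)$, since otherwise $\Delta$ would exceed $1$ and the whole segmentation framework breaks down. I would note that the result also absorbs the $r$-dependence, since all constants depend only on $m$.

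The main obstacle I expect is the second step: making the claim that ``errors cancel up to a factor of $1/m$'' fully rigorous. One has to verify that the character orthogonality identity applies term-by-term over the error expression (it does, since $\mu_{\le\sqrt N}$ is real and $\chi_k$ is completely multiplicative, so $h(d_1 d_2)=\chi_k(d_1 d_2)$ factors cleanly out), and then check that the classical bounds from Section~\ref{sec:bounds} (Shiu's theorem and its corollaries) remain valid when the sum is restricted to a fixed residue class $r\pmod m$. This last point is standard---Shiu's theorem is in fact stated for arithmetic progressions in~\cite{sum-small-interval-bound}---but it is the technical ingredient I would want to cite carefully rather than reprove.
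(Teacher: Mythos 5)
Your proof is correct and follows essentially the same route as the paper: decompose $\pi(N,m,r)$ via Dirichlet characters, apply \Cref{thm:sum-generalized} with $h=\chi_k$ for each character, observe that orthogonality collapses the combined error term to numbers $n\equiv r\pmod m$ in the critical interval, and balance $\tilde O(m/\Delta)$ against $\tilde O(N\Delta/m)$ by choosing $\Delta = m/\sqrt N$. Your closing remark about needing the arithmetic-progression version of Shiu's theorem to rigorously bound divisor sums restricted to the class $r\pmod m$ is a valid technical point that the paper glosses over.
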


If we wish to compute the values of $\pi(N, m, r)$ for all values of $r$, given $N$ and $m$, we can now compute all error terms for the individual target values $\pi(N, m, r)$ in total time $\tilde{O}\parens{\sqrt{N} + N\Delta}$, by performing a single sieve to factorize all numbers in the critical interval, and then performing divisor-iteration for each residue class $r$ modulo $m$ separately.

The approximation time is still $\tilde{O}\parens{\frac{m}{\Delta}}$, and we also required $O(m^2)$ time\footnote{Solving this specific system of equations can in fact be done asymptotically faster using an appropriate fast DFT, but for simplicity we skip the details.} to extract all individual values of $\pi(N, m, r)$ given the values of $S_k(r)$. Hence the total time is 
\[\tilde{O}\parens{\sqrt{N} + N\Delta + \frac{m}{\Delta} + m^2} \]

By setting $\Delta=\sqrt\frac{m}{N}$, we obtain:

\begin{corollary}\label{cor:count-all-residue-classes}
Given $N$ and a positive integer $m=\tilde{O}(N^{1/3})$, we can compute $\pi(N, m, r)$ for all values of $r$ in $\tilde{O}\parens{\sqrt {m N}}$ time.
\end{corollary}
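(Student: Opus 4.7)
The plan is to reduce the problem to computing, for every Dirichlet character $\chi_k$ modulo $m$, the sum $S_k(N) = \sum_{p\le N}\chi_k(p)$, and then to invert via the orthogonality relation \Cref{eq:count-residue-classes-lin-comp} to recover all $\varphi(m)$ unknowns $\pi(N,m,r)$ simultaneously. The key point is that, with care, each of the three phases of the generalized prime-counting algorithm -- FFT-based approximation, critical-interval sieving, and error-correction divisor iteration -- can be batched across the $\varphi(m)\le m$ characters so that only the (cheapest) approximation phase incurs an extra factor of $m$.

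For the approximation phase I would simply run the algorithm underlying \Cref{thm:sum-generalized} once per character. The FFT work is oblivious to the specific completely multiplicative function and costs $\tilde{O}(1/\Delta)$ per invocation, so the aggregate is $\tilde{O}(m/\Delta)$. For the error-correction phase I would first sieve the critical interval $(N,N+S]$ of size $S = \tilde{O}(\Delta N)$ a single time at cost $\tilde{O}(\sqrt N + \Delta N)$, since the factorizations do not depend on which character is being processed. The cancellation shown in the paragraph preceding \Cref{cor:count-residue-classes} -- where the inverse character transform collapses the combined error into a sum over $n\equiv r \pmod m$ -- means that performing divisor iteration class-by-class partitions the critical interval among the residues $r$, so the total divisor-iteration work is still $\tilde{O}(\sqrt N + \Delta N)$ rather than $m$ times that. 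Finally, extracting all $\pi(N,m,r)$ from the $S_k$'s via \Cref{eq:count-residue-classes-lin-comp} is $O(m \cdot \varphi(m)) = O(m^2)$ arithmetic operations.

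Adding the three contributions gives a total cost of $\tilde{O}\parens{\sqrt N + \Delta N + m/\Delta + m^2}$. Balancing the two $\Delta$-dependent terms with $\Delta = \sqrt{m/N}$ yields $\tilde{O}(\sqrt{mN})$ for those, and $\sqrt N$ is absorbed. The hypothesis $m = \tilde{O}(N^{1/3})$ is precisely the threshold at which $m^2 \le \tilde{O}(\sqrt{mN})$, since this is equivalent to $m^{3/2}\le \tilde{O}(\sqrt N)$; under this assumption the inversion step is not the bottleneck. The step I expect to need the most care is the batched error correction: one must verify that after applying $\frac{1}{\varphi(m)}\sum_k \chi_k(r)^{-1}$ to the per-character error formula \Cref{eq:error-term-generalized}, the factor $\chi_k(d_1 d_2)$ collapses to an indicator of $d_1 d_2 \equiv r \pmod m$, so that the divisor iteration over all classes really amounts to a single pass through the factored critical interval rather than $m$ independent passes.
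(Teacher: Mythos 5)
Your proof is correct and follows the same route as the paper: sieve the critical interval once, note that the class-by-class divisor iteration partitions the interval so it totals $\tilde{O}(\sqrt{N}+\Delta N)$, run the $\tilde{O}(1/\Delta)$ approximation phase once per character for $\tilde{O}(m/\Delta)$, add the $O(m^2)$ inversion cost, and balance with $\Delta=\sqrt{m/N}$. Your closing observation that $m=\tilde{O}(N^{1/3})$ is exactly the threshold where $m^2\le\tilde{O}(\sqrt{mN})$ is a nice explicit check that the paper leaves implicit.
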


\subsection{Mertens function}\label{sec:mertens}
In this section we refer to the Mertens function $M(N)=\sum_{n=1}^N \mu(n)$.

Recent work by Helfgott and Thompson \cite{elementary-mertens} improved upon the previous best time complexity of $\tilde{O}\parens{N^{2/3}}$ for computing the Mertens function using an elementary method, obtaining a time complexity of $O\parens{N^{3/5}(\log N)^{3/5 + \epsilon}}$. We are able to further improve upon this, and compute the Mertens function in $O\parens{\sqrt N \log N \sqrt{\log \log N}}$ time.

One should note that an analytic method was already known to achieve $O\parens{N^{1/2+\epsilon}}$ time complexity \cite{lagarias_analytic}, but this has never been implemented, at least to the best of our knowledge.

\subsubsection{A naive extension of our \(\pi(N)\) method}
Here we briefly discuss an approach that is similar to the one taken so far for computing $\pi(N)$. This approach is presented for completeness, since the Mertens function allows for a simplified version of our method, which is both simpler and more efficient. This method is discussed in \Cref{sec:mertens_good}.

We start with the following identity:

\begin{lemma}\label{lem:mertens-from-smooth-mu}
The function:
\[ 2\mu_{\le \sqrt N} - \parens{\mu_{\le \sqrt N}\Conv \mu_{\le \sqrt N} \Conv \one} \]
equals the Möbius function $\mu$ for inputs not larger than $N$.
\end{lemma}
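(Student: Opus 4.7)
My plan is to use the canonical decomposition $n=s\cdot r$, where $s$ is the largest $\sqrt N$-smooth divisor of $n$ and $r$ is therefore composed entirely of primes $>\sqrt N$. For $n\le N$, the co-factor $r$ must be either $1$ or a single prime in $(\sqrt N, N]$, since the product of any two primes $>\sqrt N$ would already exceed $N$; this is the only place the hypothesis $n\le N$ gets used.

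Next I would evaluate the two relevant convolutions at $n$ in closed form. Because the smooth divisors of $n$ are exactly the divisors of $s$, and $\mu$ vanishes off the square-free support,
\[
(\mu_{\le \sqrt N}\Conv\one)(n) \;=\; \sum_{d\mid s}\mu(d) \;=\; [s=1],
\]
i.e., the indicator that $n$ has no prime factor $\le \sqrt N$. Using this, I would rewrite
\[
(\mu_{\le \sqrt N}\Conv\mu_{\le \sqrt N}\Conv\one)(n) \;=\; \sum_{d\mid n}\mu_{\le \sqrt N}(d)\cdot\bigl[n/d\text{ has no prime factor}\le\sqrt N\bigr].
\]
For the bracket to be nonzero, $d$ must absorb the entire smooth part of $n$; for the Möbius weight to be nonzero, $d$ must itself be smooth and square-free. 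Together these force $d=s$ with $s$ square-free, so the triple convolution equals $\mu(s)$ when $s$ is square-free and $0$ otherwise.

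I would then conclude by a three-case check on $n\le N$: when $s$ is not square-free, all three of $\mu(n)$, $\mu_{\le \sqrt N}(n)$ and the triple convolution vanish; when $r=1$ the identity reduces to $2\mu(s)-\mu(s)=\mu(s)=\mu(n)$; and when $r$ is a prime $>\sqrt N$ we get $2\cdot 0-\mu(s)=-\mu(s)=\mu(s)\mu(r)=\mu(n)$, using multiplicativity of $\mu$ and the fact that $n=sr$ is then square-free iff $s$ is. I do not anticipate a hard step: the substantive input is just the structural bound on $r$ for $n\le N$, and everything else is bookkeeping of which divisors contribute to each convolution. As a sanity check, the same identity can be read off from the Dirichlet series: writing $F=\prod_{p\le\sqrt N}(1-p^{-s})$ and $B=\prod_{p>\sqrt N}(1-p^{-s})^{-1}$, the difference between the two sides has Dirichlet series $-D_\mu\cdot(B-1)^2$, whose support lies entirely on products of two integers $>1$ with prime factors $>\sqrt N$, i.e., on integers $>N$.
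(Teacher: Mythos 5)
Your proof is correct, and it takes a genuinely different route from the paper's. The paper argues algebraically at the level of convolutions: it writes $\mu = \mu_{\le \sqrt N}\Conv\bigl(\Conv_{\sqrt N<p\le N}\mu_p\bigr)$ in the range $n\le N$, expands the product over large primes, discards the cross terms $\delta_p\Conv\delta_{p'}$ (whose supports lie beyond $N$), and then recognizes $\sum_{\sqrt N<p\le N}\delta_p$ as $\mu_{\le\sqrt N}\Conv\one - \delta_1$ in the first $N$ entries, from which the identity drops out by substitution. You instead evaluate both sides pointwise using the canonical split $n=sr$ into smooth and rough parts, compute $(\mu_{\le\sqrt N}\Conv\one)(n)=[s=1]$ and $(\mu_{\le\sqrt N}\Conv\mu_{\le\sqrt N}\Conv\one)(n)=\mu(s)$, and then close by the three-case check, using $n\le N$ only to force $r\in\{1\}\cup\{\text{primes}>\sqrt N\}$. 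The paper's proof is shorter once one is comfortable substituting truncated convolution identities; yours is more self-contained and makes the role of the bound $n\le N$ (only two primes $>\sqrt N$ would overflow) explicit. Your Dirichlet-series sanity check, which isolates the discrepancy as $-D_\mu\cdot(B-1)^2$ supported above $N$, is a third, arguably cleanest, perspective and is closer in spirit to the generating-function argument the paper uses later for \Cref{lem:ln-identity}.
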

\begin{proof}
We are interested in the first $N$ values of

\[
\mu = \mu_{\le \sqrt N} \Conv \parens{\Conv_{\sqrt N < p \le N}\mu_p} = \mu_{\le \sqrt N} \Conv \parens{\Conv_{\sqrt N < p < N}(\delta_1 - \delta_p)}
\]

Any convolution $\delta_p\Conv \delta_{p'}$ for $p,p'>\sqrt N$ does not contribute to the result, since it only affects entries at inputs above $N$. Hence we are interested in the first entries of $\mu_{\le \sqrt N} \Conv \parens{\delta_1 - \sum_{\sqrt N < p \le N}\delta_p }$. But $\sum_{\sqrt N < p \le N}\delta_p$ is the same (in the first $N$ values) as $\mu_{\le \sqrt N}\Conv \one  - \delta_1$, hence this is finally equivalent to $\mu_{\le \sqrt N} \Conv \parens{\delta_1 - (\mu_{\le \sqrt N}\Conv \one - \delta_1))}$.
\end{proof}

We can efficiently compute the sum over these values using the techniques developed in this paper. More specifically, using a variant of \Cref{thm:error-term}.

\subsubsection{An improved algorithm}\label{sec:mertens_good}
We present here a variation of our method that is simpler and more efficient. Starting with \Cref{lem:mertens-from-smooth-mu}, one can arrive at the variant of Vaughan’s identity used also in \cite{elementary-mertens}:

\begin{equation}\label{lem:mergens-from-nonsmooth-mu}
M(N) = 2M(\sqrt N) - \sum_{n=1}^N \sum_{\substack{m_1m_2n_1=n \\ m_1,m_2\le \sqrt N}}\mu(m_1)\mu(m_2)
\end{equation}

We note that we can use $\mu$ instead of $\mu_{\le \sqrt N}$ in \Cref{lem:mergens-from-nonsmooth-mu}, since we only need it at indices up to $\sqrt N$.

We can compute \Cref{lem:mergens-from-nonsmooth-mu} in two phases. First, we sieve over $[1, \sqrt N]$ in $O(\sqrt N \log \log N)$ time, to compute $\mu(n)$ for each $n\le \sqrt N$. We are then able to trivially compute $2 M(\sqrt N)$. Next, we observe that the summation in \Cref{lem:mergens-from-nonsmooth-mu} is the sum of $\mu_{\text{trunc}}\Conv \mu_{\text{trunc}}\Conv \one$ up to and including $N$, where $\mu_{\text{trunc}}$ is $\mu$ truncated at $\sqrt N$, that is, $\mu_{\text{trunc}}(n)=\mu(n)$ for $n\le \sqrt N$ and 0 otherwise.

Here we do not need to consider prime factorizations, but directly work with the precomputed $\mu_\text{trunc}$. We use our exponential segmentation to obtain an array $\bar{\mu}_\text{trunc}$ of size $O\parens{\frac{\log N}{\Delta}}$. We are then able to use fast convolutions to compute

\begin{equation}
\bar{\mu}_\text{trunc} \Conv \bar{\mu}_\text{trunc} \Conv \bone
\end{equation}

in $O\parens{\frac{\log^2 N}{\Delta}}$ time (this time complexity follows from $1/\Delta$ ending up being $N^{O(1)}$, hence $O\parens{\log\parens{\frac{\log N}{\Delta}}} = O(\log N)$).

As with $\pi(N)$, we require an error correction phase. Analogous to \Cref{thm:error-term}, we have:

\begin{align*}
\begin{split}
\sum_{k=0}^{\kbar (N)} (\bone\Conv \bar{\mu}_\text{trunc}\Conv \bar{\mu}_\text{trunc})[k] - \sum_{n=1}^N (\one\Conv \mu_\text{trunc}\Conv \mu_\text{trunc})(n)\\ = \sum_{\substack{d_1d_2d_3>N \\ \kbar(d_1)+\kbar (d_2)+\kbar (d_3)\le \kbar(N)}} \one(d_1)\cdot \mu_\text{trunc}(d_2)\cdot \mu_\text{trunc}(d_3)
\end{split}
\end{align*}

This is slightly more complex than \Cref{thm:error-term} since we need to iterate over triplets $(d_1,d_2,d_3)$, but is also simpler by not having any $\khat$, but rather only $\kbar$'s. This means significantly fewer rounding errors exist, making the critical interval much smaller.

Indeed, since $\kbar(d_1)+\kbar (d_2)+\kbar (d_3)\ge \kbar(d_1d_2d_3) - 2$, we only need to iterate over divisor triplets of $n$ with $\kbar(N)\le \kbar(n)\le \kbar(N)+2$, so the critical interval has size $O(\Delta N)$. We can sieve to factorize all numbers in this interval in time $O(\sqrt N  + \Delta N \log\log N)$. Now, for each $n$ in the critical interval, we iterate over all triplets $(d_1,d_2,d_3)$ such that $d_1d_2d_3=n$, and accumulate the corresponding error-terms. The total work done in the error correction phase can be bounded using the following lemma.

\begin{lemma}\label{lem:bound-triple-divisors}
The number of triplets $(a,b,c)\in\NN^3$ such that $N < abc < N + O(\Delta N)$ is bounded by $O\parens{\Delta N \log^2 N}$.
\end{lemma}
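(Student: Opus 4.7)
The plan is to recognize the count as a sum of the ternary divisor function and then apply Shiu's bound (\Cref{lem:sum-small-interval-bound}) over a short interval.

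Specifically, let $\tau_3(n) = |\{(a,b,c)\in\NN^3 : abc = n\}|$ denote the ordered ternary divisor function. Then the number of triplets in question equals
\[
\sum_{n=N+1}^{N+O(\Delta N)} \tau_3(n).
\]
The function $\tau_3$ is multiplicative with $\tau_3(p^\ell) = \binom{\ell+2}{2}$, which satisfies the growth condition $\tau_3(p^\ell) \le c \cdot \ell^2$ of \Cref{lem:sum-small-interval-bound}. In particular $\tau_3(p) = 3$, so
\[
\sum_{p\le 2N} \frac{\tau_3(p)}{p} = 3 \ln\ln(2N) + O(1),
\]
again by Mertens' second theorem. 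Applying \Cref{lem:sum-small-interval-bound} directly, we get
\[
\sum_{n=N+1}^{N+S} \tau_3(n) = O\parens{\frac{S}{\log N}\exp\parens{3\ln\ln(2N) + O(1)}} = O\parens{S \log^2 N}.
\]
Plugging in $S = O(\Delta N)$ yields the claimed bound $O(\Delta N \log^2 N)$.

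The only mild obstacle is verifying applicability of \Cref{lem:sum-small-interval-bound}, which requires $S > N^{\epsilon}$ for some fixed $\epsilon>0$; in our setting $\Delta$ is chosen so that $\Delta N$ is at least polynomially large (indeed $\tilde{\Theta}(\sqrt N)$ in the regimes relevant to the Mertens algorithm), so the hypothesis is satisfied. Everything else is a direct plug-in.
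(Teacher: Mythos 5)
Your proof is correct and is essentially identical to the paper's: both recognize the count as $\sum_n \tau_3(n)$ over the short interval and apply Shiu's bound (\Cref{lem:sum-small-interval-bound}) with $\tau_3(p)=3$ to get the $\log^2 N$ factor. You additionally make explicit the verification of the $S > N^\epsilon$ hypothesis, which the paper handles implicitly via the remark that $1/\Delta$ is $N^{O(1)}$.
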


\begin{proof}
We apply \Cref{lem:sum-small-interval-bound} to the function $f=\tau_3$, where $\tau_3(n)$ is the number of triplets $(a,b,c)$ such that $n=abc$. This is easily seen to be a multiplicative function. Also, $\tau_3\parens{p^\ell} = \binom{\ell+2}{2}$ satisfies the condition in the theorem. Hence, since $1/\Delta$ will be $N^{O(1)}$, we conclude from \Cref{lem:sum-small-interval-bound} that

\begin{align*}
\sum_{n=N}^{N+O(\Delta N)} \tau_3(n) &= O\parens{\frac{\Delta N}{\log N} \exp\parens{\sum_{p\le 2N} \frac{3}{p}}} \\ &= O\parens{\frac{\Delta N}{\log N} \exp\parens{3 \ln \ln (2N) + O(1)}}
\\ &= O\parens{\Delta N \log^2 N}
\end{align*}
\end{proof}

We are now ready to prove:

\begin{theorem}\label{thm:mertens-log2}
The Mertens function $M(N)$ can be computed in $O(\sqrt N \log^2 N)$ time.
\end{theorem}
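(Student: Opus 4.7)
The plan is to execute the algorithm sketched just above the theorem statement, carefully account for the running time of each phase, and then optimize over the segmentation parameter $\Delta$. Concretely, I would break the computation into four phases: (i) a classical sieve on $[1,\sqrt N]$ to tabulate $\mu(n)$ for $n\le \sqrt N$, costing $O(\sqrt N \log\log N)$ time; (ii) formation of the segmented array $\bar{\mu}_\text{trunc}$ of length $O(\log N/\Delta)$, and the FFT-based computation of $\bone \Conv \bar{\mu}_\text{trunc}\Conv \bar{\mu}_\text{trunc}$, which by the standard FFT bound takes $O(\log^2 N/\Delta)$ time (using that $\log(\log N/\Delta)=O(\log N)$ since $1/\Delta$ is polynomial in $N$); (iii) a segmented sieve on the critical interval to factor every integer in $(N,N+O(\Delta N)]$, which costs $O(\sqrt N + \Delta N\log\log N)$; and (iv) iterating over all triples $(d_1,d_2,d_3)$ with $d_1d_2d_3$ in this critical interval to remove the error term, which by \Cref{lem:bound-triple-divisors} costs $O(\Delta N\log^2 N)$.

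Summing these contributions produces a total running time of
\[
O\!\left(\sqrt N \log\log N + \tfrac{\log^2 N}{\Delta} + \Delta N \log^2 N\right).
\]
The first term is negligible and the remaining two terms are balanced by the choice $\Delta = \Theta(1/\sqrt N)$, which gives both at $O(\sqrt N \log^2 N)$ and hence the claimed total time.

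Correctness is essentially immediate from \Cref{lem:mergens-from-nonsmooth-mu}: that identity reduces $M(N)$ to $2M(\sqrt N)$ (trivially computed from the $\sqrt N$-sieve) minus $\sum_{n\le N}(\one\Conv \mu_\text{trunc}\Conv\mu_\text{trunc})(n)$. The displayed error-term equation right before the theorem expresses exactly the discrepancy between this true sum and the approximate sum $\sum_{k\le \kbar(N)}(\bone\Conv\bar\mu_\text{trunc}\Conv\bar\mu_\text{trunc})[k]$ computed by FFT, so phase (iv) rigorously corrects it. The only tight step is verifying that the critical interval really has size $O(\Delta N)$; this follows from $\kbar(d_1)+\kbar(d_2)+\kbar(d_3)\ge \kbar(d_1d_2d_3)-2$ (two applications of \Cref{eq:kbar-subadditivity}), which forces any contributing $n=d_1d_2d_3>N$ to satisfy $\kbar(n)\le \kbar(N)+2$, hence $n\le N\cdot 2^{3\Delta}=N(1+O(\Delta))$.

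The main conceptual obstacle—compared to the $\pi(N)$ algorithm—is avoiding the $\khat$ machinery and its costly critical-interval length $\Theta(\Delta N\log N)$. Here the key simplification is that the Möbius function is already precomputed on $[1,\sqrt N]$, so $\bar{\mu}_\text{trunc}$ is obtained by direct segmentation of integer values rather than by convolving over primes; consequently every segmentation index in the error analysis is a $\kbar$, not a $\khat$, saving the extra $\log N$ factor and leading to the smaller critical interval of size $O(\Delta N)$. The remaining nontrivial ingredient is \Cref{lem:bound-triple-divisors}, which bounds the triple-divisor work via Shiu's inequality applied to $\tau_3$; this is what lets phase (iv) run in $O(\Delta N\log^2 N)$ and ultimately makes the balance $\Delta=\Theta(1/\sqrt N)$ yield the advertised $O(\sqrt N\log^2 N)$ bound.
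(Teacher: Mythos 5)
Your proposal is correct and follows essentially the same approach as the paper: sieve $\mu$ on $[1,\sqrt N]$, compute $\bar\mu_\text{trunc}\Conv\bar\mu_\text{trunc}\Conv\bone$ by FFT, bound the triple-divisor error correction via \Cref{lem:bound-triple-divisors}, and balance with $\Delta=\Theta(1/\sqrt N)$. The added details (the explicit $n<N\cdot 2^{3\Delta}$ bound for the critical interval and the correctness discussion) are consistent with what the paper leaves implicit.
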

\begin{proof}
As mentioned, we compute $\mu(n)$ up to $n=\sqrt N$ using a sieve. This requires $O(\sqrt N \log \log N)$ time. Then these values are used to compute $2M(\sqrt N)$ and the array $\bar{\mu}_{\text{trunc}}$ of size $O\parens{\frac{\log N}{\Delta}}$. We use fast convolutions to compute $\bar{\mu}_{\text{trunc}}\Conv \bar{\mu}_{\text{trunc}}\Conv \bone$ in $O\parens{\frac{\log^2 N}{\Delta}}$ time, then iterate over all divisor triplets in the critical interval in $O(\Delta N \log^2 N)$ time, given by \Cref{lem:bound-triple-divisors}. In total, the time is bounded by

\begin{equation}
O\parens{\sqrt N \log\log N + \frac{\log^2 N}{\Delta} + \Delta N \log^2 N}
\end{equation}

This is minimized when $\Delta=\Theta\parens{\frac{1}{\sqrt{N}}}$, which recovers a running time of $O(\sqrt N \log^2 N)$.
\end{proof}

\subsubsection{Further improvement using subset convolution}

We now present a faster and more sophisticated error correction phase.

\begin{lemma}\label{lem:mertens-error-correction-omega2tau}
Error correction phase for the Mertens function can be done in $O\parens{\tau(n)\omega(n)^2}$ time for each $n$ in the critical interval, where $\tau(n)$ is the number of divisors of $n$.
\end{lemma}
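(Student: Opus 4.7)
The plan is to compute, for each $n$ in the critical interval,
\[R(n) := \sum_{\substack{d_1 d_2 d_3 = n\\ d_2,d_3 \text{ sf, } d_2, d_3 \le \sqrt N}} \mu(d_2)\mu(d_3)\cdot\bigl[\kbar(d_1)+\kbar(d_2)+\kbar(d_3)\le \kbar(N)\bigr]\]
via three-fold fast subset convolution over the distinct prime factors of $n$. First I would parameterize by $I := \{p : p \mid \gcd(d_2, d_3)\}$; since $d_2, d_3$ are square-free, any such $p$ satisfies $p^2 \mid n$, so $I \subseteq U_2 := \{p \mid n : v_p(n) \ge 2\}$. With $I$ fixed, each remaining prime $p \in V := U \setminus I$ (where $U$ is the set of distinct primes of $n$) is assigned to exactly one of $d_1, d_2, d_3$, inducing a 3-partition $V = S_1 \sqcup S_2 \sqcup S_3$, with $d_1$ absorbing any leftover multiplicities. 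The Möbius factor then splits as $\mu(d_2)\mu(d_3) = (-1)^{|S_2|+|S_3|}$ (the $|I|$ contributions square out), which is multiplicative in $V$ and therefore fits the subset-convolution framework.

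For each fixed $I$, I would then apply standard fast subset convolution to evaluate the inner 3-partition sum in $O(|V|^2 \cdot 2^{|V|})$ time. The hard part will be incorporating the $\kbar$ constraint, which is not multiplicative. The saving observation is that $\kbar(d_1)+\kbar(d_2)+\kbar(d_3) = \kbar(n) - c$ for a carry $c \in \{0,1,2\}$ determined entirely by the fractional parts $\{\log_2(d_j)/\Delta\}$, so the constraint reduces to $c \ge \kbar(n) - \kbar(N) \in \{0,1,2\}$. Because the carry takes only three values, I would augment the subset convolution's local state with a finite tracker for the running fractional partial sums (equivalently, for the eventual carry value), so that at the end of the convolution each 3-partition's contribution is counted only when its carry meets the threshold; this adds only a constant-factor overhead to the $O(|V|^2 \cdot 2^{|V|})$ bound.

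Summing the per-$I$ costs and using $\sum_{I \subseteq U_2} 2^{-|I|} = (3/2)^{|U_2|}$, I obtain
\[
\sum_{I \subseteq U_2} |V|^2 \cdot 2^{|V|} \;\le\; \omega(n)^2 \cdot 2^{\omega(n)} \cdot (3/2)^{|U_2|}.
\]
Finally, since each $p \in U_2$ contributes at least $3$ to $\tau(n)$ and every other prime at least $2$, we have $\tau(n) = \prod_{p \mid n} (v_p(n)+1) \ge 2^{\omega(n) - |U_2|}\cdot 3^{|U_2|} = 2^{\omega(n)} \cdot (3/2)^{|U_2|}$, so the per-$n$ total is $O(\omega(n)^2 \tau(n))$, as claimed.
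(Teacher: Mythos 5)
Your combinatorial decomposition — parameterize by the set $I$ of primes dividing both $d_2$ and $d_3$, which is forced to lie inside $U_2=\{p : v_p(n)\ge 2\}$, then a $3$-partition of the remaining primes handled by fast subset convolution — is exactly the paper's argument (the paper conditions, for each prime with $\alpha_i\ge 2$, whether it divides both $d_2,d_3$, getting $2^r$ subset convolutions of sizes $\omega(n)-t$). Your cost aggregation $\sum_{I\subseteq U_2}|V|^2 2^{|V|}\le \omega(n)^2 2^{\omega(n)}(3/2)^{|U_2|}\le \omega(n)^2\tau(n)$ is also the paper's bound, verbatim.

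The one place you gloss over a real technical point is the $\kbar$ constraint. You correctly observe that $\kbar(d_1)+\kbar(d_2)+\kbar(d_3)=\kbar(n)-c$ with $c\in\{0,1,2\}$, but your proposed fix — ``augment the subset convolution's local state with a finite tracker for the running fractional partial sums (equivalently, for the eventual carry value)'' — does not work as stated. The fractional parts $\{\log_2 d_T/\Delta\}$ are not a finite-state quantity, and the carry is not a local, mergeable attribute: when two disjoint subsets $T_2,T_3$ are combined, the increment $\kbar(d_{T_2})+\kbar(d_{T_3})-\kbar(d_{T_2}d_{T_3})\in\{0,1\}$ depends on the \emph{real-valued} fractional parts of both pieces, not on any bounded discrete summary of them, so fast subset convolution (which needs a ring-valued, merge-compatible payload) cannot carry a ``carry bit'' along. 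The paper resolves this precisely by your degree observation, but encodes it differently: set $D[d]:=x^{\kbar(d)}\mu(d)$ for a \emph{fixed scalar} $x$, run the subset convolution over scalars, and combine with $E[\mathcal{P}\setminus S]=x^{\kbar(d_1)}$ to get the scalar $P_n(x)=\sum x^{\kbar(d_1)+\kbar(d_2)+\kbar(d_3)}\mu(d_2)\mu(d_3)$. Since $P_n(x)/x^{\kbar(n)-2}$ has degree at most $2$, three evaluations and one interpolation recover the coefficients for $c=0,1,2$, from which the relevant ones are summed. This is what makes the ``constant-factor overhead'' claim true; your proof should replace the finite-tracker heuristic with this polynomial-evaluation argument.

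Additionally, a small bookkeeping detail you leave implicit: when a prime $p\notin I$ with $v_p(n)\ge 2$ is assigned to $d_2$ or $d_3$, the surplus $p^{v_p(n)-1}$ falls into $d_1$, so the ``$E$-side'' weight $x^{\kbar(d_1)}$ depends on the assignment and must be adjusted per case, exactly as in the paper's formula involving $x^{\kbar(d_1 p^{\alpha-1})}$; this is handled but should be stated.
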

\begin{proof}
For each $n$ in the critical interval, we iterate over its prime factorization (computed in the sieving phase). Suppose $n=\prod_{i=1}^{\omega(n)} p_i^{\alpha_i}$ is its prime factorization, with $\alpha_i > 1$ for the first $r$ values of $i$, for some $r\ge 0$, and $\alpha_i=1$ otherwise. Recall we only need to iterate over triplets $(d_1,d_2,d_3)$ with $d_1d_2d_3=n$ and $d_2$, $d_3$ are square-free, and accumulate $\mu(d_2)\mu(d_3)$ whenever $\kbar(d_1)+\kbar(d_2)+\kbar(d_3)\le \kbar(N)$. We bypass the problem of checking whether $\kbar(d_1)+\kbar(d_2)+\kbar(d_3)\le \kbar(N)$, by precomputing a polynomial mapping $D$ such that for any $d\mid n$ we have $D[d](x)=x^{\kbar(d)}\mu(d)$. Then, the correction term will be the sum of coefficients of all $x^t$ for $t\le \kbar(N)$ in the polynomial $P_n$ defined by
\begin{equation}\label{eq:mertens-ec-pn}
P_n=\sum_{d_1d_2d_3=n} x^{\kbar(d_1)}\cdot D[d_2]\cdot D[d_3]
\end{equation}

However, we observe that, given $d_1d_2d_3=n$, we must have $\kbar(d_1)+\kbar(d_2)+\kbar(d_3) \in [\kbar(n)-2, \kbar(n)]$, and hence $P(x)/x^{\kbar(n)-2}$ is a polynomial of degree at most 2. Therefore, by evaluating it in any 3 points, we will be able to recover its coefficients, and sum the relevant ones into the error term. For efficiency, all calculations will take place in a finite field, one that is large enough to recover the result (for example, working modulo the same prime number used for the NTT on the initial $\bar{\mu}_{\text{trunc}}$ and $\bone$ arrays). Hence, from now on we will freely refer to $D[d]$ and $P_n$ as scalars, and not polynomials.

If $n$ is square-free (that is, $r=0$), there is a one-to-one map between divisors of $n$ and subsets of the primes dividing $n$. Let $\mathcal{P}$ be the set of primes dividing $n$. For a subset $S\subseteq \mathcal{P}$ we write $D[S]$ for the corresponding $D[d]$. Then, we begin by computing the \emph{subset convolution} $D_2=D\Conv D$, defined by $(D\Conv D)(S)=\sum_{T\subseteq S} D[T]\cdot D[S\setminus T]$. Using the Fast Subset Convolution algorithm from \cite{fast-subset-convolution}, the values of $D_2[S]$ for all subsets $S\subseteq \mathcal{P}$ can be computed together in time $|\mathcal{P}|^2 2^{|\mathcal{P}|} = \omega(n)^2 2^{\omega(n)} = \omega(n)^2\tau(n)$. Then, we can compute $P_n$ by:

\[
P_n = \sum_{S\subseteq \mathcal{P}} E[\mathcal{P}\setminus S] D_2[S] ,
\]
where $E[S]$ is defined as $x^{\kbar(d)}$ for the corresponding divisor $d$.

If $n$ is not square-free, the situation is slightly more complicated, but we can still reduce it to the square-free case. Suppose $r=1$, so there is exactly one prime $p$ dividing $n$ with multiplicity greater than one. Let $\alpha$ be this multiplicity. We can compute $P_n$ in \Cref{eq:mertens-ec-pn} by observing that $d_2$ and $d_3$ still need to be square-free to contribute. We can split into two cases: $p$ divides both $d_2$ and $d_3$, or at most 1 of them. The first case contributes 
\begin{equation}\label{eq:mertens-case-p-divides-d2d3}
\sum_{d_1d_2d_3=n/p^{\alpha}} x^{\kbar(d_1p^{\alpha-2})}\cdot D[d_2 p]\cdot D[d_3 p]
\end{equation}
which is again a 3-fold subset convolution, since $n/p^{\alpha}$ is square-free. Moreover, this convolution uses only $\omega(n) - 1$ primes. The second case contributes exactly
\begin{equation} \sum_{d_1d_2d_3=n/p^{\alpha-1}} x^{\kbar(d_1p^{\alpha-1})}\cdot D[d_2]\cdot D[d_3]
\end{equation}
which again can be computed via Fast Subset Convolution, since $n/p^{\alpha-1}$ is square-free.

Generalizing, for any $r>0$ primes dividing $n$ with multiplicity greater than $1$, we can reduce the computation to $2^r$ subset convolutions, by conditioning, for each prime, whether it divides both $d_2$ and $d_3$, or at most one of them. We now analyze the time required for all these cases.

For each $t\in[0,r]$, there are $\binom{r}{t}$ cases where a $t$ of the primes divides both $d_2$ and $d_3$. Each such case requires Fast Subset Convolution on a set of primes of size $\omega(n)-t$ (recall that by using \Cref{eq:mertens-case-p-divides-d2d3} we reduce the remaining number of primes to consider, where the other $r-t$ conditions do not reduce the number of primes), which therefore take $O\parens{\omega(n)^2 2^{\omega(n)-t}}$ time in total. The total work is, therefore, in the order of magnitude of:

\begin{align*}
\begin{split}
\sum_{t=0}^r & \binom{r}{t}\omega(n)^2 
 2^{\omega(n)-t} = \omega(n)^2 2^{\omega(n)}\sum_{t=0}^r \binom{r}{t} 
 2^{-t} \\ &= \omega(n)^2 2^{\omega(n)}\parens{1+\frac{1}{2}}^r = \omega(n)^2 \parens{\prod_{i=1}^r 3}\cdot \parens{\prod_{i=r+1}^{
\omega(n)} 2} \\ & \le \omega(n)^2 \prod_{i=1}^{\omega(n)} (\alpha_i+1) = \omega(n)^2 \tau(n)
\end{split}
\end{align*}
The last transition used the fact that $\alpha_i = 1$ for $i > r$ and $\alpha_i \ge 2$ otherwise.
\end{proof}

It remains to bound the sum $\sum_{n=N+1}^{N+S} \omega(n)^2 \tau(n)$. We now prove a tight upper bound on this sum.

\begin{lemma}\label{lem:mertens-bound-omega2tau}
Given, $\sqrt N < S<N$, the following holds
\[ \sum_{n=N+1}^{N+S}\omega(n)^2 \tau(n) = O\parens{S \log N (\log \log N)^2} \]
\end{lemma}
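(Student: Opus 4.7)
The plan is to expand $\omega(n)^2$ combinatorially as a sum over ordered pairs of primes dividing $n$, exchange the order of summation, and then bound each inner sum of $\tau$ over a short interval using \Cref{lem:sum-small-interval-bound}, summing the result over prime pairs via Mertens' second theorem $\sum_{p \le x} 1/p = \ln\ln x + O(1)$.

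Concretely, I would write $\omega(n)^2 = \sum_{p_1 \mid n}\sum_{p_2 \mid n} 1$ (ordered pairs of prime divisors) and swap the sums to obtain
\[
\sum_{n=N+1}^{N+S}\omega(n)^2\tau(n) = \sum_{p_1,\,p_2}\;\sum_{\substack{N<n\le N+S \\ \mathrm{lcm}(p_1,p_2)\mid n}} \tau(n).
\]
For a fixed ordered pair set $q = \mathrm{lcm}(p_1,p_2)$, so $q=p$ when $p_1=p_2=p$ and $q=p_1p_2$ otherwise. Writing $n = qm$ and using $\tau(qm) \le \tau(q)\tau(m) \le 4\tau(m)$ (by multiplicativity of $\tau$), the inner sum is bounded by $4 \sum_m \tau(m)$ with $m$ ranging over an interval of length $\Theta(S/q)$ around $N/q$. \Cref{lem:sum-small-interval-bound} applied to $\tau$ (multiplicative, $\tau(p^\ell)=\ell+1$) gives, as in the proof of \Cref{lem:bound-divisors}, $\sum_m \tau(m) = O((S/q)\log N)$, hence
\[
\sum_{\substack{N<n\le N+S \\ q\mid n}}\tau(n) = O\!\left(\tfrac{S}{q}\log N\right).
\]

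Summing this estimate over prime pairs, the diagonal contribution ($p_1=p_2=p$) is bounded by $\sum_p O((S/p)\log N) = O(S\log N \log\log N)$ by Mertens, while the off-diagonal contribution ($p_1 \ne p_2$, $q=p_1p_2$) is
\[
\sum_{p_1\ne p_2} O\!\left(\tfrac{S}{p_1p_2}\log N\right) = O\!\left(S\log N \bigg(\sum_{p\le 2N}\tfrac{1}{p}\bigg)^{\!2}\right) = O\!\left(S \log N (\log\log N)^2\right),
\]
which dominates and yields the claimed bound.

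The main technical obstacle is that \Cref{lem:sum-small-interval-bound} requires the inner interval length to exceed a small power of its endpoint, a condition which may fail once $q$ approaches $\sqrt N$. I would handle this by a case split at $q = N^{1/2-\delta}$ for a sufficiently small $\delta > 0$: using the hypothesis $S > \sqrt N$, for $q < N^{1/2-\delta}$ the condition $S/q > (N/q)^\epsilon$ holds and the Shiu bound applies as above. For $q \ge N^{1/2-\delta}$, the classical observation that any integer $n \le 2N$ has only $O(1)$ prime divisors (respectively, at most $O(1)$ prime-pair divisors) of size exceeding $N^{1/2-\delta}$ lets us collapse the double sum over such large-$q$ pairs back to $O(\sum_n \tau(n)) = O(S\log N)$ via \Cref{lem:bound-divisors}, which is absorbed into the main bound.
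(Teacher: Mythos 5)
Your core strategy is the same as the paper's: expand $\omega(n)^2$ as a double sum over ordered pairs of prime divisors, swap the order of summation, apply Shiu's bound (\Cref{lem:sum-small-interval-bound}) to the resulting short intervals, and sum over prime pairs via Mertens' second theorem. The difference is that the paper avoids the validity issue for Shiu's bound entirely by replacing $\omega(n)$ up front with $\sum_{p\mid n,\, p < N^{1/5}} 1$ (using that at most $5$ primes $\ge N^{1/5}$ can divide any $n\le 2N$, so this undercounts $\omega(n)$ by at most a bounded factor), which guarantees $q = p_1 p_2 < N^{2/5}$ and hence $S/q > N^{1/10}$ always. You instead perform a post-hoc case split on the size of $q$.

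Your handling of the case $q\ge N^{1/2-\delta}$ has a genuine error. You claim that any $n\le 2N$ has at most $O(1)$ ordered prime pairs $(p_1,p_2)$ with $p_1,p_2\mid n$ and $p_1p_2 \ge N^{1/2-\delta}$. This is false: take $n = 2\cdot 3\cdot 5\cdots p_k\cdot q$ with $q\approx N^{0.9}$ and the remaining primes small; then every pair $(p_i, q)$ has product exceeding $N^{1/2-\delta}$, giving $\Theta(\omega(n))$ such pairs. The correct count is $O(\omega(n))$ per $n$, not $O(1)$: the larger prime in such a pair must exceed $N^{(1/2-\delta)/2}$, so there are $O(1)$ choices for it, and then $O(\omega(n))$ choices for the other. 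Consequently, the large-$q$ contribution collapses to $O\!\left(\sum_{n}\omega(n)\tau(n)\right)$, not $O\!\left(\sum_n\tau(n)\right)$ as you assert, and using only the trivial $\omega(n)=O(\log N)$ gives $O(S\log^2 N)$, which is \emph{not} absorbed into the target bound. The argument is still salvageable — one can show $\sum_{n=N+1}^{N+S}\omega(n)\tau(n) = O(S\log N\log\log N)$ by the same sum-swap-and-Shiu technique applied once (or, more simply, adopt the paper's upfront restriction to primes $< N^{1/5}$, which sidesteps the issue) — but as written this step does not hold, and it is the step on which the claimed absorption rests.
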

\begin{proof}
We first observe that it is enough to prove that

\begin{equation}\label{eq:mertens-sum-omega-pbound}
\sum_{n=N+1}^{N+S} \parens{\sum_{\substack{p\mid n \\ p < N^{1/5}}} 1}^2 \tau(n) = O\parens{S \log N (\log \log N)^2}
\end{equation}

The reason is that $\frac{1}{6}\omega(n) \le \sum_{\substack{p\mid n \\ p < N^{1/5}}} 1$ whenever there is a prime $p<N^{1/5}$ dividing $n$ (since there are at most 5 primes $\ge N^{1/5}$ dividing $n$), and any number with all prime factors above $N^{1/5}$ contributes at most a constant to the sum.

We now omit the subscript $p<N^{1/5}$ for ease of notation. We expand the left-hand side of \Cref{eq:mertens-sum-omega-pbound} as:

\begin{align*} \sum_{n=N+1}^{N+S} \parens{\sum_{p\mid n} 1}^2 \tau(n) &= \sum_{n=N+1}^{N+S} \parens{\sum_{p\mid n}\sum_{q\mid n} 1} \tau(n) = \sum_{p, q} \sum_{\substack{n\in[N+1, N+S] \\ p,q\mid n}} \tau(n)
\end{align*}

We further note now that we are free to discard terms with $p=q$, since they only account for at most a constant fraction of the result for each $n$, unless $n$ is prime (in which cast it only contributes a constant to the sum). Hence, using $\tau(npq)\le 4\tau(n)$ for any two primes $p,q$, it is enough to bound

\[ \sum_{p \neq q} \sum_{\substack{n\in(N, N+S] \\ p,q\mid n}} \tau(n) = \sum_{p \neq q} \sum_{n = \ceil*{(N+1)/pq}}^{\floor*{(N+S)/pq}} \tau(npq) \le 4\sum_{p \neq q} \sum_{n = \ceil*{(N+1)/pq}}^{\floor*{(N+S)/pq}} \tau(n)  \]

That is, for each two different primes $p,q\le N^{1/5}$, we are summing $\tau(n)$ in an interval of size $\frac{S}{pq} > \frac{N^{1/2}}{N^{2/5}} = N^{1/10}$. It follows that we can apply \Cref{lem:sum-small-interval-bound} to replace each $\sum_{n = \ceil*{(N+1)/pq}}^{\floor*{(N+S)/pq}} \tau(n)$ with $O\parens{\frac{S/pq}{\log N}\exp\parens{\sum_{p_2\le 2N} \frac{2}{p_2}}} = O\parens{\frac{S}{pq}\log N}$.

Finally, summing over all $p\neq q$, this is bounded by

\[ O\parens{S \log N \parens{\sum_{p<N^{1/5}} \frac{1}{p}}\parens{\sum_{q<N^{1/5}} \frac{1}{q}}} =O\parens{S \log N (\log \log N)^2} \]

as promised.
\end{proof}

We now have an improved version of \Cref{thm:mertens-log2}:

\begin{theorem}\label{thm:mertens-log15}
The Mertens function $M(N)$ can be computed in time

\[ O\parens{\sqrt N (\log N)^{3/2} \log \log N} . \]
\end{theorem}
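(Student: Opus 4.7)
The plan is to follow the structure of the proof of \Cref{thm:mertens-log2}, but swap out the naive divisor-iteration step for the faster per-number error correction procedure of \Cref{lem:mertens-error-correction-omega2tau}. First I would run a linear sieve on $[1,\sqrt N]$ to obtain $\mu(n)$ for all $n\le \sqrt N$ in $O(\sqrt N \log \log N)$ time, which gives $2M(\sqrt N)$ for free and also lets us build the segmented array $\bar{\mu}_{\text{trunc}}$ of length $O\parens{\frac{\log N}{\Delta}}$. Next, I would compute $\bar{\mu}_{\text{trunc}}\Conv \bar{\mu}_{\text{trunc}}\Conv \bone$ by two FFT-based convolutions; because $1/\Delta = N^{O(1)}$, each FFT costs $O\parens{\frac{\log^2 N}{\Delta}}$, so this phase is $O\parens{\frac{\log^2 N}{\Delta}}$. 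Summing the resulting array up to index $\kbar(N)$ yields the approximation to $\sum_{n\le N}(\one \Conv \mu_{\text{trunc}}\Conv \mu_{\text{trunc}})(n)$ that feeds into \Cref{lem:mergens-from-nonsmooth-mu}.

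For the error correction, I would sieve the critical interval $(N,N+S]$ with $S=O(\Delta N)$ to obtain the full prime factorization of each $n$ there, costing $O(\sqrt N + \Delta N \log\log N)$ as in the proof of \Cref{thm:mertens-log2}. Then, instead of naively enumerating all triples $(d_1,d_2,d_3)$ with $d_1d_2d_3=n$, I would invoke \Cref{lem:mertens-error-correction-omega2tau} to process each $n$ in $O(\tau(n)\omega(n)^2)$ time via Fast Subset Convolution. By \Cref{lem:mertens-bound-omega2tau}, summing this cost over $n\in(N,N+S]$ is bounded by $O\parens{S \log N (\log\log N)^2} = O\parens{\Delta N \log N (\log\log N)^2}$, a saving of a $\log N$ factor versus \Cref{lem:bound-triple-divisors}.

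Gathering the four contributions, the total running time is
\[
O\parens{\sqrt N \log\log N + \tfrac{\log^2 N}{\Delta} + \Delta N \log\log N + \Delta N \log N (\log\log N)^2}.
\]
The two $\Delta$-dependent terms are balanced against each other by equating $\frac{\log^2 N}{\Delta}$ with $\Delta N \log N (\log\log N)^2$, i.e.\ $\Delta = \Theta\parens{\frac{1}{\log\log N}\sqrt{\frac{\log N}{N}}}$. Substituting back yields $\frac{\log^2 N}{\Delta} = \Theta\parens{\sqrt N (\log N)^{3/2} \log\log N}$, which dominates the other terms and gives the stated bound.

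The main conceptual obstacle is not in this plan per se, which is a direct combination of existing lemmas, but rather in the machinery already set up: \Cref{lem:mertens-error-correction-omega2tau} required a careful reduction of the non-square-free case to a sum of $2^r$ Fast Subset Convolutions on square-free subproblems, and \Cref{lem:mertens-bound-omega2tau} required reducing $\omega(n)^2\tau(n)$ to a double sum over small prime pairs so that Shiu's bound (\Cref{lem:sum-small-interval-bound}) could be applied inside short intervals $(N/pq, (N+S)/pq]$ of length at least $N^{1/10}$. Once both are in hand, balancing $\Delta$ is purely a mechanical optimization.
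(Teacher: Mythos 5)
Your proof is correct and follows the same route as the paper: combine the structure of \Cref{thm:mertens-log2} with the improved error-correction bound from \Cref{lem:mertens-error-correction-omega2tau} and \Cref{lem:mertens-bound-omega2tau}, then balance by taking $\Delta = \Theta\parens{\frac{\sqrt{\log N}}{\sqrt N \log\log N}}$, which is precisely the paper's choice (your expression is the same quantity rewritten). The extra $\Delta N\log\log N$ sieve term you carried along is indeed dominated, so nothing is amiss.
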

\begin{proof}
Similar to the proof of \Cref{thm:mertens-log2}, and using \Cref{lem:mertens-error-correction-omega2tau} and \Cref{lem:mertens-bound-omega2tau} to bound the time spent on error correction, using $S=O(\Delta N)$, the total time is bounded by

\begin{equation*}
O\parens{\sqrt N \log\log N + \frac{\log^2 N}{\Delta} + \Delta N \log N (\log \log N)^2}
\end{equation*}

This is minimized when $\Delta=\Theta\parens{\frac{\sqrt{\log N}}{\sqrt{N} \log \log N}}$, obtaining a running time of $O\parens{\sqrt N (\log N)^{3/2} \log \log N}$.
\end{proof}

\subsubsection{Further improvement using tables}

The same look-up table method used in \Cref{sec:div-table} can be used here, mapping the multi-set of the approximated fractional prime factorization of each $n$ to the corresponding error term. Using a look-up table will further reduce the asymptotic running time needed for computing the Mertens function, making the Fast Subset Convolution technique unnecessary. This introduces extra randomness into the algorithm to enjoy the guarantees, and might also be less efficient in practice due to the size of the table for small values of $N$. We only sketch the details as they are very similar to those already discussed in \Cref{sec:div-table}.

Here, the condition for a triplet $(d_1,d_2,d_3)$ such that $d_1d_2d_3=n \in (N, N+S]$ to be corrected is that $d_2,d_3$ are square-free with $\pmax \le \sqrt N$, and that \begin{equation}\label{eq:mertens-ec-cond}
    \kbar(d_1)+\kbar(d_2)+\kbar(d_3)\le \kbar(N)
\end{equation}

If these conditions are met, we correct the result by $(-1)^{\omega(d_2)+\omega(d_3)}$. Using similar manipulations that led to \Cref{eq:table-div-criterion}, we rewrite \Cref{eq:mertens-ec-cond} as
\begin{equation}\label{eq:mertens-ec-cond-fractional}
\left\{\frac{\log_2 n}{\Delta}\right\} - \left\{\frac{\log_2 d_2}{\Delta}\right\}  - \left\{\frac{\log_2 d_3}{\Delta}\right\} < \kbar(N) - \kbar(n) + 1
\end{equation}

Here, as in \Cref{sec:lut}, it is enough to know $\left\{\frac{\log_2 n}{\Delta}\right\}$ and the factorization of $d_2$ and $d_3$ with enough precision, then with high probability we know how to account for the divisor triplet $(d_1,d_2,d_3)$ based on only the approximation.
We omit the details, and state the result:

\begin{lemma}\label{lem:mertens-table-ec} Using look-up tables, the error correction phase in the Mertens calculation can be done in $O\parens{S \log \log N}$ time.
\end{lemma}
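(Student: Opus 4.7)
The plan is to transfer the look-up table machinery of \Cref{sec:div-table} essentially verbatim, adapting it from pairs $(d_1,d_2)$ with $d_1d_2=n$ to triplets $(d_1,d_2,d_3)$ with $d_1d_2d_3=n$. The key observation, already formalised in \Cref{eq:mertens-ec-cond-fractional}, is that whether a triplet contributes to the error correction depends only on (i) $\kbar(n)$, (ii) the fractional value $\{\log_2 n/\Delta\}$, and (iii) the multi-set of fractional values $\{\log_2 p/\Delta\}$ for the distinct primes dividing $n$, since $d_2,d_3$ must be square-free; and the correction $(-1)^{\omega(d_2)+\omega(d_3)}$ is determined by the same data.

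First I would restrict to numbers with $\omega(n)\le 4\log_2\log_2 N$; by \Cref{lem:bound-num-div-on-high-omega} (used now with the cube $\tau_3$ in place of $2^\omega$, via a direct application of \Cref{lem:sum-small-interval-bound} to $\tau_3(n)\cdot 2^{\omega(n)-k}$) the total number of triplets arising from $n$ with more prime factors contributes only $O\parens{\Delta N}$ work and is negligible. Next, for each segment (on which $\kbar(n)$ is constant) of the first $O(\log\log N)$ segments of the critical interval, I would build a look-up table indexed by a rounded key with precision $\epsilon$, whose entries store the already-summed error contribution from all triplets fully determined by the approximation, together with a short list of triplets that are too close to the boundary in \Cref{eq:mertens-ec-cond-fractional} to be decided from the rounded data. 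Because the size of the key is controlled by $4\log_2\log_2 N$ primes and $1/\epsilon+1$ buckets, the table has $\exp(O((\log\log N)^2))$ entries, sub-polynomial in $N$, and its brute-force construction time is absorbed.

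As in \Cref{sec:lut}, I would randomise $1/\Delta$ uniformly in an interval of length $1$ and bound the probability that a single triplet lands within $O(\epsilon\log\log N)$ of the integer boundary in \Cref{eq:mertens-ec-cond-fractional}; this probability is $O(\epsilon\log\log N)$ provided $d_1<n$ (the case $d_1=n$ being $O(1)$ per $n$ and harmless). Hence the expected number of undetermined triplets per segment is $O(\epsilon\log\log N)$ times the total number of square-free divisor pairs of $n$'s in that segment, which by \Cref{lem:sum-small-interval-bound} applied to $4^{\omega(n)}$ is $O(\Delta N \log^3 N)$; choosing $\epsilon = 1/\poly(\log N)$ makes the aggregate work on undetermined triplets negligible compared with the table-lookup work.

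What remains is to charge the actual lookups: sieving in \Cref{sec:sieve-less} produces the (restricted) factorisation of $O(S)$ numbers in $O(S\log\log N)$ time after the improvements of \Cref{lem:small-sieve}; for each such $n$ we compute the rounded key in $O(\omega(n))=O(\log\log N)$ time and perform one table access, giving total cost $O(S\log\log N)$. The main obstacle I anticipate is purely book-keeping: one must verify that the ``missing'' triplets arising from $d_1=n$, from $n$ with large $\omega(n)$, and from near-boundary rounding together contribute only $o(S\log\log N)$, using $\tau_3$-style bounds (\Cref{lem:bound-triple-divisors}) in place of the $2^{\omega}$ bound used for $\pi(N)$. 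Combining these pieces yields the claimed $O(S\log\log N)$ expected running time.
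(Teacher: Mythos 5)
Your proposal follows the same route the paper indicates (the paper explicitly states "we only sketch the details as they are very similar to those already discussed in \Cref{sec:div-table}" and leaves it at that): build per-segment look-up tables keyed by the rounded fractional values $\floor*{\frac{1}{\epsilon}\left\{\frac{\log_2 p}{\Delta}\right\}}$ of the distinct prime factors, restrict to $n$ with bounded $\omega(n)$, randomize $1/\Delta$ to bound the number of near-boundary triplets, and charge the remaining work to the sieve and one $O(\log\log N)$-time key computation per $n$, while noting the Mertens critical interval has only $O(1)$ segments with $S=O(\Delta N)$. This is a faithful and reasonable expansion of the paper's sketch.

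One small quantitative slip: you transferred the threshold $\omega(n)\le 4\log_2\log_2 N$ verbatim from \Cref{sec:div-table}, but for triplets the per-$n$ work on the ``missing'' numbers is bounded by $4^{\omega(n)}$ rather than $2^{\omega(n)}$, so the decay in $k$ must beat a larger polylogarithmic factor. Applying \Cref{lem:sum-small-interval-bound} to $8^{\omega(n)}$ (after bounding $\mathbbold{1}[\omega(n)\ge k]\le 2^{\omega(n)-k}$) gives $\sum_{\omega(n)\ge k}4^{\omega(n)}=O\parens{S\log^7 N/2^k}$, and with $k=4\log_2\log_2 N$ this is $O(S\log^3 N)$, not negligible. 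Bumping the threshold to, say, $8\log_2\log_2 N$ (or using the $\omega(n)^2\tau(n)$ subset-convolution fallback of \Cref{lem:mertens-error-correction-omega2tau} for these few numbers) fixes this without changing the sub-polynomial table size, so the claimed $O(S\log\log N)$ bound still stands; but as written the constant is too small. The reference to \Cref{sec:sieve-less} is also slightly off the mark, since for Mertens a plain sieve of the $O(\Delta N)$-length critical interval already suffices, though this does not change the bound.
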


As a corollary, we have the following result:

\begin{theorem}\label{thm:mertens-logn}
The Mertens function $M(N)$ can be computed in time
\[ O\parens{\sqrt N \log N \sqrt{\log \log N}} . \]
\end{theorem}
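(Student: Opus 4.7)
The plan is to mimic the proof of \Cref{thm:mertens-log15} almost verbatim, only swapping in the improved error correction bound from \Cref{lem:mertens-table-ec}. That is, I would still sieve $[1,\sqrt N]$ in $O(\sqrt N \log\log N)$ time to obtain $\mu$ up to $\sqrt N$ (and thus $M(\sqrt N)$ and the array $\bar{\mu}_{\text{trunc}}$), compute $\bar{\mu}_{\text{trunc}}\Conv \bar{\mu}_{\text{trunc}}\Conv \bone$ via FFT in $O(\log^2 N/\Delta)$ time, factorize the critical interval $(N,N+S]$ (of size $S=O(\Delta N)$) in $O(\sqrt N + \Delta N \log\log N)$ time, and finally run the error correction. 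The only change is that the divisor-triplet accounting is now carried out via the randomized look-up tables of \Cref{lem:mertens-table-ec} instead of via Fast Subset Convolution, yielding $O(S \log\log N) = O(\Delta N \log\log N)$ total work for error correction rather than the previous $O(\Delta N \log N (\log\log N)^2)$.

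Putting the pieces together, the total running time is bounded by
\[
O\parens{\sqrt N \log\log N + \frac{\log^2 N}{\Delta} + \Delta N \log\log N}.
\]
The first term is dominated once $\Delta$ is chosen sub-$1/\log N$, so the minimization is between the FFT cost $\log^2 N/\Delta$ and the error-correction cost $\Delta N \log\log N$. Setting these equal gives $\Delta = \Theta\!\parens{\log N/\sqrt{N \log\log N}}$, at which point both terms equal $\Theta(\sqrt N \log N \sqrt{\log\log N})$, which is the desired bound.

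There is no real obstacle — the theorem is essentially a corollary of \Cref{lem:mertens-table-ec}, and the only subtlety is confirming that the look-up table machinery adapted from \Cref{sec:div-table} applies cleanly to the triplet-based error term of \Cref{sec:mertens_good} (via the fractional-part criterion \eqref{eq:mertens-ec-cond-fractional}), and that the randomization over $1/\Delta$ used there is compatible with the $\Delta$ chosen here. Both of these were essentially argued in the sketch preceding \Cref{lem:mertens-table-ec}, so I would simply appeal to them and conclude.
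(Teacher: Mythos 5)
Your proposal is correct and takes exactly the route the paper intends: the paper itself states \Cref{thm:mertens-logn} simply as a corollary of \Cref{lem:mertens-table-ec}, plugging the improved $O(S\log\log N)$ error-correction bound into the framework of \Cref{thm:mertens-log15}, with the balance point $\Delta = \Theta\parens{\log N/\sqrt{N\log\log N}}$. Your explicit computation of the minimizing $\Delta$ and the resulting $O(\sqrt N \log N \sqrt{\log\log N})$ bound matches what the paper leaves implicit.
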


\subsection{Counting square-free numbers}\label{sec:sqfree}
We describe an $\tilde{O}\parens{N^{1/3}}$-time algorithm for computing the number of square-free numbers up to $N$, improving on the current state-of-the-art algorithm in $\tilde{O}\parens{N^{2/5}}$ time described in \cite{counting-sqfree}. The algorithm presented here combines our techniques with the ideas of \cite{counting-sqfree}.

As in \cite{counting-sqfree}, we start with an inclusion-exclusion on prime squares, giving the following expression for the number of square-free numbers $\le N$:

\begin{equation}\label{eq:sqfree-basic}
\operatorname{sqfree}(N) = \sum_{d=1}^{\floor*{\sqrt N}} \mu(d)\floor*{\frac{N}{d^2}}
\end{equation}

We derive a different expression based on \Cref{eq:sqfree-basic} as follows:

\begin{align*}
\begin{split}
&\sum_{d=1}^{\floor*{\sqrt N}} \mu(d)\floor*{\frac{N}{d^2}} = \sum_{d \le \sqrt N} \sum_{t\le N/d^2} \mu(d) \\
& = \sum_{t\le N} \sum_{d \le \sqrt{N/t}}  \mu(d) = \sum_{t\le N} M\parens{\sqrt{\frac{N}{t}}}
\end{split}
\end{align*}

That is:
\begin{equation}\label{eq:sqfree-mertens}
\operatorname{sqfree}(N) = \sum_{t\le N} M\parens{\sqrt{\frac{N}{t}}}
\end{equation}

Now, as in \cite{counting-sqfree}, we will first sieve to compute all $\mu(d)$ values for $d<D$, in $O(D\log\log D)$ time. This takes care of all values of $t\ge \frac{N}{D^2}$ in \Cref{eq:sqfree-mertens}. To compute all other values of the Mertens function, we apply the methods developed in \Cref{sec:mertens} for computing the Mertens function, with a slight change: we set a precision variable $\Delta$, perform a single FFT to obtain the array $\bar{\mu}_\text{trunc} \Conv \bar{\mu}_\text{trunc} \Conv \bone$, and do a separate error correction for each of the $\sqrt{\frac{N}{t}}$ thresholds. Crucially, we perform a single FFT that is useful for computing all these values.

While for $t>1$ this amounts to applying a formula different from that given in \Cref{lem:mergens-from-nonsmooth-mu} (as we are using $\mu(m_1),\mu(m_2)$ for $m_1,m_2$ greater than the square root of the input in these cases), this can still be done, because in \Cref{lem:mergens-from-nonsmooth-mu} we can replace $\sqrt N$ in both occurrences in the right-hand side with any threshold $T$ larger than that, as long as we use the same threshold $T$ on both parts of the formula:
\[
M(N) = 2M(T) - \sum_{n=1}^N \sum_{\substack{m_1m_2n_1=n \\ m_1,m_2\le T}}\mu(m_1)\mu(m_2)
\]

This can be justified by observing that we never add a term $\mu(m_1)\cdot \mu(m_2)$ with \emph{both} $m_1,m_2>\sqrt N$, since otherwise $m_1m_2n_1$ would be too large to be included. Then, any specific $\mu(m)$ for $m$ between $\sqrt N$ and $T$ is counted in the sum $2\sum_{n} \mu(n)\floor*{\frac{N}{mn}}=2$ times (as this sum counts by inclusion-exclusion the integers up to $N/m$ divisible by no prime), exactly canceling its contribution to the $2M(T)$ term.

Looking at \Cref{eq:sqfree-mertens}, the maximal input required to the Merten function is $\sqrt N$. The length of the arrays is $O\parens{\frac{\log N}{\Delta}}$, the cost of FFT is $O\parens{\frac{\log^2 N}{\Delta}}$, and then for each value of $\sqrt{N/t}$ we need an error correction phase taking $\tilde{O}\parens{\Delta \sqrt{\frac{N}{t}}}$ time.

The total cost is then

\[
\tilde{O}\parens{D + \frac{1}{\Delta} + \Delta \sum_{t=1}^{\floor*{N/D^2}} \sqrt{\frac{N}{t}}}
\]
which, using $\sum_{k=1}^m \frac{1}{\sqrt k} = O(\sqrt m)$, can be simplified into
\[
\tilde{O}\parens{D + \frac{1}{\Delta} + \Delta\frac{N}{D}}
\]
which is minimized for $D=\Delta^{-1}=\tilde{\Theta}\parens{N^{1/3}}$, finally giving $\tilde{O}\parens{N^{1/3}}$ time.

Thus, we proved:

\begin{theorem}\label{thm:sqfree-sum}
$\operatorname{sqfree}(N)$ can be computed in $\tilde{O}{\parens{N^{1/3}}}$ time.
\end{theorem}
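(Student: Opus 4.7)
The plan is to follow the identity derivation sketched just before the theorem and then carefully balance one FFT-based Mertens precomputation against a per-threshold error-correction pass. I would start from the inclusion-exclusion formula \Cref{eq:sqfree-basic} and, by swapping the order of summation on $d$ and $t=\floor{N/d^2}$, rewrite the count as \Cref{eq:sqfree-mertens}, i.e.\ as a sum of $N$ values of the Mertens function at thresholds $\sqrt{N/t}$. So the whole problem reduces to computing $M(\sqrt{N/t})$ for all $t\le N$ quickly.

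Next, I would introduce a cutoff $D$ and split the sum. For $t\ge N/D^2$ the argument $\sqrt{N/t}$ of $M$ is at most $D$, so I would handle all these terms directly by running a linear sieve up to $D$ to tabulate $\mu(d)$, taking $\tilde O(D)$ time; this also yields the partial sums $M(x)$ for all $x\le D$ in the same pass. For $t<N/D^2$ (there are $O(N/D^2)$ such values) I want to invoke the Mertens algorithm from \Cref{sec:mertens_good}, but crucially I need a single FFT to serve all these thresholds at once. To that end I would choose a single truncation $T=\sqrt N$, build $\bar\mu_\text{trunc}$ once (using the values already sieved up to $\sqrt N\le D$, or extending the sieve if needed), compute $\bar\mu_\text{trunc}\Conv\bar\mu_\text{trunc}\Conv\bone$ once by FFT on arrays of length $O(\log N/\Delta)$ at cost $\tilde O(1/\Delta)$, and then for each remaining $t$ run the error-correction summation described in \Cref{sec:mertens_good} specialized to the threshold $\sqrt{N/t}$.

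The one subtle point I want to highlight as the main obstacle is justifying that the same truncation $T$ can be used in the Vaughan-type identity \Cref{lem:mergens-from-nonsmooth-mu} for arguments smaller than $T^2$. This is exactly the remark made in the preceding text: the cross term $\mu(m_1)\mu(m_2)$ with $m_1,m_2>\sqrt{x}$ (where $x=\sqrt{N/t}$ is the current argument) never arises in the $n\le x$ range, and any $m$ with $\sqrt x<m\le T$ contributes $2$ to the subtracted sum, exactly canceling its contribution to $2M(T)$. I would spell this cancellation out carefully to make sure using the shared FFT output is legitimate for every threshold.

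Finally I would add up the costs. The error-correction cost at threshold $\sqrt{N/t}$ is $\tilde O(\Delta\sqrt{N/t})$ (from the critical-interval bound of \Cref{sec:mertens_good} applied with parameter $\Delta$ fixed globally), and summing over $t\le N/D^2$ gives, via $\sum_{t\le m}1/\sqrt t=O(\sqrt m)$,
\[
\tilde O\!\parens{\Delta\sqrt N\cdot \sqrt{N/D^2}}=\tilde O\!\parens{\Delta N/D}.
\]
Combined with the sieve and FFT costs the total is $\tilde O(D+1/\Delta+\Delta N/D)$, which is minimized by taking $D=1/\Delta=\tilde\Theta(N^{1/3})$, giving the claimed $\tilde O(N^{1/3})$ bound. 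No new ingredients beyond \Cref{sec:mertens_good} are needed; the proof is really a careful accounting exercise around the shared-FFT trick.
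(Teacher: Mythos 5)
Your proposal follows the paper's proof essentially step for step -- the identity $\operatorname{sqfree}(N)=\sum_{t\le N}M(\sqrt{N/t})$, the cutoff $D$ with a direct sieve handling all $t\ge N/D^2$, the single shared FFT on $\bar\mu_\text{trunc}\Conv\bar\mu_\text{trunc}\Conv\bone$, the per-threshold error correction costing $\tilde O(\Delta\sqrt{N/t})$, the $\sum_{t\le N/D^2}1/\sqrt t=O(\sqrt{N/D^2})$ accounting, the cancellation argument justifying a shared threshold $T$ in the Vaughan-type identity, and the final balance $D=1/\Delta=\tilde\Theta(N^{1/3})$.

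However, there is a concrete bug in your choice of truncation: you set $T=\sqrt N$ and write that you will use ``the values already sieved up to $\sqrt N\le D$, or extend the sieve if needed,'' yet your final parameter choice is $D=\tilde\Theta(N^{1/3})\ll\sqrt N$, so the inequality $\sqrt N \le D$ is false. The sieve up to $D$ does not supply $\mu(n)$ for $n\le\sqrt N$, and extending it to $\sqrt N$ as your fallback suggests would alone cost $\tilde O(\sqrt N)$, destroying the $\tilde O(N^{1/3})$ bound you are trying to prove. The fix is to take $T$ anywhere in $[N^{1/4},D]$: the Vaughan-type identity only requires $T\ge\sqrt x$ for each Mertens argument $x$, and since the largest argument appearing in \Cref{eq:sqfree-mertens} is $\sqrt N$ (at $t=1$), any $T\ge N^{1/4}$ suffices, while $T\le D$ guarantees the sieve already supplies all needed values of $\mu$. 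Taking $T=D=\tilde\Theta(N^{1/3})$ -- the paper's implicit choice -- satisfies both constraints, and with this single correction the rest of your argument goes through unchanged.
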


\subsection{Totient summatory function}
As a final demonstration of the generality of our techniques, we show how to efficiently compute the totient summatory function
\[ \Phi(N)=\sum_{n=1}^N \varphi(n) \]
where $\varphi$ is Euler's totient function. To the best of our knowledge, the best algorithm known for computing $\Phi(N)$ runs in $\tilde{O}\parens{N^{2/3}}$ time, with the possible exception of an $O\parens{N^{1/2+\epsilon}}$ analytical approach.

\begin{theorem}\label{thm:euler-sum}
$\Phi(N)$ can be computed in $\tilde{O}\parens{\sqrt N}$ time.
\end{theorem}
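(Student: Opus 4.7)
The plan is to reduce computing $\Phi(N)$ to evaluating the Mertens function $M$ at $\tilde{O}(\sqrt N)$ carefully chosen inputs, and then to reuse a single FFT for all of them, much as was done in \Cref{sec:sqfree}. Starting from $\varphi = \mu \Conv \text{id}$, a direct manipulation gives
\[ \Phi(N) = \sum_{n=1}^N (\mu \Conv \text{id})(n) = \frac{1}{2}\sum_{d=1}^N \mu(d)\floor*{\frac{N}{d}}\parens{\floor*{\frac{N}{d}}+1}. \]
I would split this sum at $D=\floor*{\sqrt N}$. For $d\le D$, the values $\mu(d)$ are obtained by a sieve in $O(\sqrt N \log\log N)$ time and the corresponding terms are summed directly. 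For $d>D$, the value $v=\floor*{N/d}$ takes at most $D$ distinct values in $\{1,\ldots,\floor*{N/D}\}$, and the contribution of each group is
\[ \frac{v(v+1)}{2}\parens{M\parens{\floor*{N/v}}-M\parens{\max\parens{\floor*{N/(v+1)},D}}}, \]
reducing the remaining task to evaluating $M$ at $O(\sqrt N)$ inputs, all bounded by $N$.

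To compute these Mertens values within the budget, I would follow the strategy of \Cref{sec:sqfree}: pick a precision $\Delta$, sieve $\mu$ up to the common threshold $T=\sqrt N$ (valid for every target input $x\le N$, since the variant of \Cref{lem:mergens-from-nonsmooth-mu} used in \Cref{sec:sqfree} allows any $T\ge\sqrt x$), and perform a single FFT-based convolution $\bar\mu_{\text{trunc}}\Conv\bar\mu_{\text{trunc}}\Conv\bone$ in $\tilde{O}(1/\Delta)$ time. Then, for each target input $x=\floor*{N/v}$ I would run a dedicated error-correction phase using the divisor-triplet analysis of \Cref{sec:mertens_good}, which costs $\tilde{O}(\Delta x)$ per input.

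Balancing completes the argument: the error corrections together cost
\[ \sum_{v=1}^{\floor*{\sqrt N}} \tilde{O}(\Delta\cdot N/v) = \tilde{O}(\Delta N \log N), \]
which I would match against the FFT cost $\tilde{O}(1/\Delta)$ by choosing $\Delta=\tilde{\Theta}(1/\sqrt N)$, yielding total time $\tilde{O}(\sqrt N)$ after also accounting for the $O(\sqrt N)$ sieve and the $O(\sqrt N)$ outer summation. The only subtle point I foresee is justifying the reusability of a single convolution array across all target thresholds; this rests precisely on the observation of \Cref{sec:sqfree} that $\sqrt x$ in \Cref{lem:mergens-from-nonsmooth-mu} may be replaced by any common upper bound $T\ge\sqrt x$ without affecting correctness, since the extra contributions from $\mu(m)$ with $\sqrt x<m\le T$ cancel between the $2M(T)$ term and the convolution sum.
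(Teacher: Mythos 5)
Your proposal is correct and follows essentially the same approach as the paper. The paper uses the equivalent identity $\Phi(N)=\sum_{k=1}^N k\cdot M\parens{\floor*{N/k}}$ directly (obtained from your $\frac{1}{2}\sum_d\mu(d)\floor*{N/d}(\floor*{N/d}+1)$ by the same change of order you perform when grouping by $v=\floor*{N/d}$), then reuses the single FFT-based Mertens computation with per-input error correction in time $\tilde O(1/\Delta+\Delta N)$ balanced at $\Delta=\Theta(1/\sqrt N)$, exactly as you argue; your added sieve split at $D=\sqrt N$ and explicit invocation of the threshold-$T$ flexibility from \Cref{sec:sqfree} match what the paper does (or omits for brevity) and don't change the outcome.
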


\begin{proof}[Proof sketch]
Our algorithm uses the following identity:

\begin{equation}
\Phi(N)=\sum_{n=1}^N \mu(n)\cdot \frac{1}{2} \floor*{\frac{N}{n}}\parens{1 + \floor*{\frac{N}{n}}} ,
\end{equation}
which follows by observing that $\Phi(N)$ counts pairs of co-prime numbers up to $N$, and applying inclusion-exclusion on the prime factorization. We replace $\frac{1}{2} \floor*{\frac{N}{n}}\parens{1 + \floor*{\frac{N}{n}}}$ by $\sum_{1\le k_1 \le k_2 \le N/n} 1$, change the order of summation, and finally obtain

\begin{equation}\label{eq:totient-sum-by-mobius}
\Phi(N)=\sum_{k=1}^N k\cdot M\parens{\frac{N}{k}}
\end{equation}

We now proceed in the same way we did for computing the square-free numbers up to $N$ (\Cref{sec:sqfree}), using a single FFT for computing the Mertens function on all values up to $N$, without a sieve for computing smaller values. The time required for the whole algorithm is then

\[ \tilde{O}\parens{\frac{1}{\Delta} + \Delta \sum_{k=1}^{\floor*{N/D}} \frac{N}{k}} = \tilde{O}\parens{\frac{1}{\Delta} + \Delta N} \]
which gives $\tilde{O}(\sqrt N)$ when $\Delta=\frac{1}{\sqrt N}$.
\end{proof}

\section{Implementation considerations}
\subsection{Avoiding rounding and floating-point errors}
The only floating-point calculations necessary in our algorithms are in computing $\kbar(n)$, where we only need to ensure that $\kbar(n)$ is non-decreasing, and roughly increases logarithmically in order to enjoy the guarantees given in the theorems. This ensures no floating-point errors can occur.

The only other part requiring floating-point calculations is in the computations involving the look-up tables, which are necessary only in order to improve about a $O(\sqrt {\log N})$ factor in the time bound, and is more sensitive to the precision in order to enjoy the time guarantees (but is easily guaranteed correctness).

\subsection{Implementation speed-ups}

\subsubsection{Smaller modulus in NTT}\label{sec:smaller-modulus}
We note that the NTT (that is, finite-field FFT) modulus must be a prime number larger than the padded arrays we convolve, that is, it must be at least $2\frac{\log_2 N}{\Delta} = \tilde{O}(\sqrt N)$. Since the result of the algorithm, the number of primes, is $\tilde{O}(N)$, one option is to carry the entire computation modulo a prime which is $\tilde{O}(N)$.

An alternative is to compute the result modulo a product of two primes, each large enough for the NTT. This requirement already guarantees that their product is larger than the result of the algorithm. Working with two primes is beneficial because we can run the entire algorithm separately on each prime, working with half the word size, and then combine the two results at the end using the Chinese Remainder Theorem.

\subsubsection{Assuming the Riemann Hypothesis}
The Riemann Hypothesis provides a bound $\pi(N)=\operatorname{li}(N) + O(\sqrt N \log N)$. Thus, by assuming the Riemann Hypothesis, it is enough to compute the result modulo a number that is $O(\sqrt N \log N)$, then find the unique representative in that interval. This means that unlike the proposal of the previous subsection, we can work with a single prime modulus of size $\tilde{O}(\sqrt N)$. This saves a factor of two in the running time.

\subsubsection{Convolving separately with small primes}\label{sec:conv-small-primes}
In the subsequent improvements, it will be helpful to be able to not deal with small primes. In order to not disrupt the main flow of ideas, we describe here how we are able to deal separately with small primes in a different way. Indeed, our end goal is to compute $\hmu$. We can compute it first on primes $p\ge \log_2 N$, discarding all small primes. Then, we can convolve with $\delta_{0} - \delta_{\kbar (p)}$ over primes $p < \log_2 N$ (recall \Cref{eq:hmu-convolution}). This last convolution can be done in-place in linear time for each prime, and so this requires an additional $O\parens{\frac{\sqrt N}{\Delta } \log N}$ time to compute $\hmu$ given the incomplete computation. This is dominated by other parts of our algorithm, and saves a constant factor from the FFT work.

\subsubsection{Discarding small primes}\label{sec:discard-small-primes}
We can remove all occurrences of small primes from the initial $\bone$ array. That is, we produce an adjusted $\bone$ array, where each segment counts the number of numbers divisible by only primes above some threshold $p_\text{min}$.

This alteration has to be met with removing these small primes from the convolution computing $\hmu$. It can be seen that then, the rest of the logic still holds, and we are still computing $\pi(N)-\pi(\sqrt N) + 1$ as in \Cref{lem:pi-relation}.

The advantage, now, is that fewer divisors need to be checked for error correction, because there are fewer "rounding errors". In other words, $\khat(d)$ is now closer, on average, to $\kbar(d)$.

Note that this does not affect the maximal $r$ used for the $C_r$'s, because the primes were already partitioned into smaller ranges to counter the effect variation in prime sizes.

To apply this change, we need to be able to efficiently compute the adjusted $\bone$ array. We observe that the number of integers not divisible by any prime $p\le p_\text{min}$ is constant across intervals of size exactly $\prod_{p\le p_\text{min}} p$, where their count is exactly $\prod_{p\le p_\text{min}} (p-1)$. It follows that for each segment, we can compute the amount of numbers in it by first finding the number of intervals of size $\prod_{p\le p_\text{min}} p$ that fit in it, and accounting for the remainder using a precomputed table of size $\prod_{p\le p_\text{min}} p$. This then requires $O\parens{\prod_{p\le p_\text{min}} p}$ time to perform.

Since $\prod_{p\le p_\text{min}} p = e^{(1+o(1))p_\text{min}}$, we can use $p_\text{min} \approx \frac{1}{2}\ln N$, requiring $O(\sqrt N)$ time, that is dominated by other runtimes in the algorithm.

\subsection{Integration with the combinatorial method}\label{sec:combinatorial-integration}
Asymptotically, our algorithm has a better time complexity than the combinatorial approach. Nevertheless, the latter is much better in practice for smaller values of $N$.

Here we would like to suggest that our approach can be combined with the combinatorial approach. One way to do so is to split the primes between the two approaches: the combinatorial approach can efficiently count the numbers up to each threshold of the form $N/n$ that are coprime to all primes up to a certain threshold $B$. These counters then can be used to replace the $\bone$ array with a version that only requires the removal of numbers that are divisible by primes in the range $[B, \sqrt{N}]$. Using our method we can do so by convolving this array and a modified Möbius function $\hat{\mu}_{[B, \sqrt{N}]}$ that only includes numbers divisible only by these primes. This is an improvement of the idea discussed in \Cref{sec:discard-small-primes}.

By using both methods we can utilize the advantages of both approaches. The combinatorial method handles small primes very efficiently, while our method enjoys significantly easier error correction by considering only numbers without small factors and noting that the number of factors of such numbers is bounded.

The above idea provides a continuous trade-off between the combinatorial method and our method. We believe that there will be a transition zone, for numbers $N$ where our method does not out-performs the combinatorial approach, but a combination of the two approaches does so.

We remark that we do not know of a way to combine the combinatorial approach with the space-optimized version of our algorithm.

\subsection{Parallelization}
Most of our algorithm is easily parallelizable:
\begin{itemize}
    \item Subsets of primes can be handled in parallel.
    \item For each such subset, once the initial FFT is completed, the sequence $c_r$ can be computed in parallel for different Fourier coefficients.
    \item Convolving the partial Möbius function for different prime subsets can be done in parallel using a binary tree.
    \item Error correction for each segment (value of $\kbar(n)$) can be done in parallel.
    \item Sieve can be done in parallel using a segmented sieve.
    \item Accessing the table of \Cref{sec:div-table} and handling table misses for different $n$'s is completely parallelizable.
\end{itemize}

It is noted that the ideas presented for reducing the space complexity make it possible to easily \emph{distribute} the computations of the whole algorithm. In the FFT phase, for example, it is possible to compute the $Z_{\ell_0}$ arrays (as described in \Cref{sec:space-improvement-details}) for different values of $\ell_0$ independently.

\section{Relations to the analytic method}
In this section we discuss similarities between our approach and the analytic method.

We start with presenting a variation on our algorithm that avoids segmentation and rounding. We believe this version to be almost identical to the analytical method. We do not establish a concrete algorithm in this section, since this algorithm would be very similar to the one presented in papers discussing the analytic method \cite{lagarias_analytic}.

\subsection{Blurring instead of segmentation}
We start by noting that the basic version of our algorithm (\Cref{sec:basic-algorithm}) removes from $\one$ numbers that are divisible by primes $\le \sqrt N$ using convolution with a smooth Möbius function. This is not analogous to the analytic method. In \Cref{lem:ln-identity} we present an identity that for $N^{1/t} < 2$ allows to filter all non-prime values using only convolutions of $\one$ with itself. We discuss here why this variant is analogous to the analytic method.

Rather than approximating Dirichlet convolution using array convolutions, we can view Dirichlet convolution as a convolution of (generalized) functions over $\RR$. In other words, given a function $f$ over $\NN$, we represents it using the following generalized function over $\RR$:
  \[ \tilde{f}(x) = \sum_{n \in \NN} f(n) \delta(x - \ln n) \]
where $\delta(x)$ is the Dirac delta function. We observe that indeed the convolution of the representations of two functions is the representation of their Dirichlet convolution:
\[\begin{split}
  (\tilde{f} \Conv \tilde{g})(x) &= 
  \int \tilde{f}(t) \tilde{g}(x-t) dt = \\
  &= \parens{\sum_{n_0 \in \NN} f(n_0) \delta(x - \ln n_0)}\Conv \parens{\sum_{n_1 \in \NN} g(n_1) \delta(x - \ln n_1)} = \\
  &= \sum_{n_0 \in \NN} \sum_{n_1 \in \NN} f(n_0) g(n_1) \parens{\delta(x - \ln n_0) \Conv \delta(x - \ln n_1)} = \\
  &= \sum_{n_0 \in \NN} \sum_{n_1 \in \NN} f(n_0) g(n_1) \delta(x - \ln n_0 - \ln n_1) = \\
  &= \sum_{n \in \NN} \parens{\sum_{d|n} f(d) g(n/d)} \delta(x - \ln n) \\
  &= \sum_{n \in \NN} (f \Conv g)(n) \delta(x - \ln n)
\end{split}\]
We used the identity $\delta(x - x_0) \Conv \delta(x - x_1) = \delta(x - x_0 - x_1)$.

As in the algorithm presented in this paper, the computation begins with a representation of all numbers $\le N$ which we denote
 \[ \tilde{\one}(x) = \sum_{n=1}^N \delta(x-\ln n) \]

The obvious problem with this approach is that there is no fast way to conduct convolutions of such generalized functions over $\RR$. We would like to use the Fourier transform:
  \[ F(\xi) = \mathcal{F}\{\tilde{f}\}(\xi) = \int_{-\infty}^\infty \tilde{f}(x) e ^ {-2 \pi i \xi x} dx \]

However, $F(\xi)$ takes values over all $\xi \in \RR$, so it cannot be fully computed. Instead, we wish to evaluate the Fourier transform only at a finite set of values. We restrict $F$ to multiples of a fundamental frequency $\xi = k \xi_0$. That is, we replace $F(\xi)$ with:
 \[ \sum_{k \in \ZZ} F(k \xi_0) \delta(\xi - k\xi_0) \]
This is the same as multiplying $F(\xi)$ by $\sum_k \delta(\xi - k\xi_0)$, which is the same as convolving the original function with $\sum_k \delta(x - k/\xi_0)$. In other words, we compute a cyclic folding of $f$ over a period of $1/\xi_0$. As in our algorithm, we would like to have this period large enough to avoid wrap around. A reasonable choice would be $1/\xi_0 = O(\log ^2 N)$, allowing for the representation of numbers up to $N$ and their $\log_2 N$ powers, needed for manipulating such functions.

We are left with the task of computing and manipulating the Fourier transform at points $\xi = k \xi_0$. Recall that our algorithm starts with the function $\tilde{\one}(x) = \sum_{n=1}^N \delta(x-\ln n)$. Its Fourier transform at the desired points is:
  \[ \mathcal{F}\{\tilde{\one}\}(k \xi_0) = 
  \sum_{n=1}^N e ^ {-2 \pi i k \xi_0 \ln n} = 
  \sum_{n=1}^N n ^ {-2 \pi i k \xi_0} = 
  \zeta_N \parens{2 \pi i k \xi_0}\]

Here $\zeta_N(s)=\sum_{n=1}^N n^{-s}$ is the Riemann zeta function truncated at $n=N$. In other words, we need to evaluate the (truncated) zeta function at selected points along the imaginary axis. The analytic method uses an analogous procedure that evaluates the truncated zeta function simultaneously at many points, and then applies the Riemann–Siegel formula to approximate the Riemann zeta function using the truncated value.

Even though we restricted $F$ to a countable set of points $\xi = k \xi_0$, this set is still infinite. We must add another restriction: $|\xi| < \xi_{\text{max}}$. This has the same effect as multiplying $F(\xi)$ by a rectangle window with this span, which translates to blurring the original function with the kernel $\operatorname{sinc}(\xi_{\text{max}} x) = \frac{\sin(\xi_{\text{max}}x)}{\xi_{\text{max}}x}$.

Blurring the original function means that the computation would not be able to recover the exact number of primes up to $N$. Instead, the result would essentially count primes larger than $N$, though the contribution of such primes diminishes as $p$ increases. Similarly, primes slightly smaller than $N$ will not contribute exactly $1$ to the sum. The rate of decay is determined by the rate of decay of the blurring kernel, that is, the integral of the tail of $\operatorname{sinc}(\xi_{\text{max}} x)$. This decays very slowly, approximately as $\frac{1}{\xi_{\text{max}} x}$. This was a problem especially in the early attempts towards the analytic method.

The breakthrough came when \cite{lagarias_analytic} solved the issue by replacing the sinc kernel with a rapidly decaying one. In other words, instead of summing up the Fourier coefficients up to a threshold $\xi_{\text{max}}$, they proposed to sum them up with weights that corresponds to an improved kernel. \cite{galway-analytic} proposed to use a gaussian kernel for this purpose. This kernel decays very rapidly both in the original space and in Fourier space. Using this kernel, we can choose $\xi_{\text{max}} = \tilde{O}(\sqrt{N})$ and get a blurring with width $\Delta x = \tilde{O}(1 / \sqrt{N})$. Overall, this method requires the computation of $2\xi_{\text{max}} / \xi_0 = \tilde{O}(\sqrt{N})$ Fourier coefficients and in addition applying error correction for contributions of interval of size $\Delta n = N \cdot \Delta x = \tilde{O}(\sqrt{N})$ around $N$. Unlike our method, the analytic method avoids segmentation so it only has contributions from prime numbers. This somewhat simplifies error correction, and even allows the use of a sieve faster by a factor of $\ln \ln N$, like the Atkin sieve.

\subsection{Advantages of our approach}
We believe that our approach offers several advantages over the analytic method.

First, we believe that our approach is simpler to understand, to analyze and to implement. This is why we consider our approach "elementary", unlike the analytic method. One manifestation is that we were able to obtain an accurate complexity bound for our method. To the best of our knowledge, no such complexity analysis is known for the analytic method, the complexity is only known up to $N ^ {\epsilon}$.

The analytic method relies heavily on complex analysis, while our approach avoids it entirely by using Fourier transform as a black-box for fast convolution.

A related difference is that the analytic method carries the computation in complex numbers, which are prone to numerical errors. This, together with the use of complex analysis, means the analytic method requires a sophisticated error analysis in order to justify the correctness of its result (even though in practice the fact that it is very close to an integer may serve as an indication for correctness).

In our approach the computation is done entirely in integers. We use finite-field FFT. In addition to avoiding numerical errors, this enables working with numbers of smaller precision, as discussed in \Cref{sec:smaller-modulus}. Our approach does have one step that uses real numbers: we need to evaluate $\kbar(n)$ and its inverse. This amounts to computing $\log$ or $\exp$ in some precision. We note that if the computation is done correctly, numerical errors here do not change the correctness of the result, as the error correction phase can use the same computation and fix false contributions that were caused by numerical errors.

Lastly, the combinatorial method is currently more efficient than the analytic method, despite being asymptotically faster. We believe that our approach may be combined with the combinatorial method to obtain an algorithm that defeats both methods for intermediate values of $N$. This idea is briefly presented in \Cref{sec:combinatorial-integration}. We note that unlike our method, the analytic method can not be combined in such a way due to the fact that it does not have a combinatorial interpretation.

\subsection{Advantages of the analytic method}
One significant advantage of the analytic method over our method is its space complexity. It achieves its optimal time complexity $O(N^{1/2 + \epsilon})$ using space as small as $O(N^{1/4 + \epsilon})$. Our approach requires $\tilde{O}(\sqrt N)$ space in order to achieve $\tilde{O}(\sqrt N)$ time. Using less space than that would degrade the time complexity (\Cref{thm:pi-space-time-tradeoff}). Even though this degradation is not huge, it may be enough for the analytic method to win on memory-bounded systems.

We hope that the space complexity of our method may be reduced by improving the ideas presented in \Cref{sec:space-improvement-details}.

\section*{Acknowledgement}
We thank Ohad Klein for helpful ideas on reducing the space complexity of our algorithm (\Cref{sec:space-improvement}). We also thank Noam Kimmel, Gal Porat, Amir Sarid and Roee Sinai for helpful comments on earlier drafts of this paper.

\newpage
\bibliography{main}

\begin{thebibliography}{BHKK07}

\bibitem[BHKK07]{fast-subset-convolution}
Andreas Bj\"{o}rklund, Thore Husfeldt, Petteri Kaski, and Mikko Koivisto.
\newblock Fourier meets m\"{o}bius: Fast subset convolution.
\newblock In {\em Proceedings of the Thirty-Ninth Annual ACM Symposium on
  Theory of Computing}, STOC '07, page 67–74, New York, NY, USA, 2007.
  Association for Computing Machinery.
\newblock \href {https://doi.org/10.1145/1250790.1250801}
  {\path{doi:10.1145/1250790.1250801}}.

\bibitem[Bü14]{buthe_analytic}
Jan Büthe.
\newblock An improved analytic method for calculating $\pi(x)$, 2014.
\newblock URL: \url{https://arxiv.org/abs/1410.7008}, \href
  {https://doi.org/10.48550/ARXIV.1410.7008}
  {\path{doi:10.48550/ARXIV.1410.7008}}.

\bibitem[CP10]{prime_book}
Richard~E. Crandall and Carl Pomerance.
\newblock {\em Prime numbers: A computational perspective}.
\newblock Springer, 2010.

\bibitem[DR96]{deleglise_combinatorial}
M.~Deleglise and J.~Rivat.
\newblock Computing $\pi(x)$: The meissel, lehmer, lagarias, miller, odlyzko
  method.
\newblock {\em Mathematics of Computation}, 65(213):235–245, 1996.
\newblock \href {https://doi.org/10.1090/s0025-5718-96-00674-6}
  {\path{doi:10.1090/s0025-5718-96-00674-6}}.

\bibitem[eS06]{silva_combinatorial}
Tom{\'a}s~Oliveira e~Silva.
\newblock Computing $\pi(x)$: the combinatorial method, 2006.

\bibitem[Gal00]{improved_atkin}
William~F. Galway.
\newblock Dissecting a sieve to cut its need for space, 2000.
\newblock \href {https://doi.org/10.1007/10722028_17}
  {\path{doi:10.1007/10722028_17}}.

\bibitem[Gal01]{galway-analytic}
William~F. Galway.
\newblock {\em {Analytic computation of the prime-counting function}}.
\newblock PhD thesis, University of Illinois at Urbana-Champaign, 2001.
\newblock URL: {http://www.math.uiuc.edu/~galway/PhD\_Thesis/}.

\bibitem[Hel17]{improved_eratosthenes}
Harald~A. Helfgott.
\newblock An improved sieve of eratosthenes, 2017.
\newblock URL: \url{https://arxiv.org/abs/1712.09130}, \href
  {https://doi.org/10.48550/ARXIV.1712.09130}
  {\path{doi:10.48550/ARXIV.1712.09130}}.

\bibitem[HKM22]{our_primecounting}
Dean Hirsch, Ido Kessler, and Uri Mendlovic.
\newblock primecounting, 2022.
\newblock URL: \url{https://github.com/PrimeCounting/PrimeCounting}.

\bibitem[HT21]{elementary-mertens}
Harald~A. Helfgott and Lola Thompson.
\newblock Summing $\mu(n)$: a faster elementary algorithm, 2021.
\newblock URL: \url{https://arxiv.org/abs/2101.08773}, \href
  {https://doi.org/10.48550/ARXIV.2101.08773}
  {\path{doi:10.48550/ARXIV.2101.08773}}.

\bibitem[HZ04]{fast_exp}
Guillaume Hanrot and Paul Zimmermann.
\newblock Newton iteration revisited, 2004.
\newblock URL:
  \url{https://citeseerx.ist.psu.edu/viewdoc/summary?doi=10.1.1.415.9164},
  \href {https://doi.org/10.1.1.415.9164} {\path{doi:10.1.1.415.9164}}.

\bibitem[Leg30]{legendre}
Adrien-Marie Legendre.
\newblock {\em Th{\'e}orie des nombres}.
\newblock Didot fr{\`e}res, third edition, 1830.

\bibitem[LMO85]{lagarias_combinatorial}
J.C. Lagarias, V.S. Miller, and A.M. Odlyzko.
\newblock Computing $\pi(x)$: The meissel-lehmer method.
\newblock {\em Mathematics of Computation - Math. Comput.}, 44, 04 1985.
\newblock \href {https://doi.org/10.2307/2007973} {\path{doi:10.2307/2007973}}.

\bibitem[LO87]{lagarias_analytic}
J.C. Lagarias and A.M. Odlyzko.
\newblock Computing $\pi(x)$: An analytic method.
\newblock {\em Journal of Algorithms}, 8(2):173--191, 1987.
\newblock \href {https://doi.org/10.1016/0196-6774(87)90037-X}
  {\path{doi:10.1016/0196-6774(87)90037-X}}.

\bibitem[Paw11]{counting-sqfree}
Jakub Pawlewicz.
\newblock Counting square-free numbers, 2011.
\newblock URL: \url{https://arxiv.org/abs/1107.4890}, \href
  {https://doi.org/10.48550/ARXIV.1107.4890}
  {\path{doi:10.48550/ARXIV.1107.4890}}.

\bibitem[Pla12]{platt_analytic}
David~J. Platt.
\newblock Computing $\pi(x)$ analytically, 2012.
\newblock URL: \url{https://arxiv.org/abs/1203.5712}, \href
  {https://doi.org/10.48550/ARXIV.1203.5712}
  {\path{doi:10.48550/ARXIV.1203.5712}}.

\bibitem[Shi80]{sum-small-interval-bound}
P.~Shiu.
\newblock A {B}run-{T}itschmarsh theorem for multiplicative functions.
\newblock {\em Journal für die reine und angewandte Mathematik}, 313:161--170,
  1980.
\newblock URL: \url{http://eudml.org/doc/152201}.

\bibitem[Wal22]{old_primecount}
Kim Walisch.
\newblock primecount, 2022.
\newblock URL: \url{https://github.com/kimwalisch/primecount}.

\end{thebibliography}

\newpage
\appendix

\section{Sketched details of space improvements}\label{sec:space-improvement-details}

We denote by $M$ the allowed space complexity for the algorithm. The forthcoming analysis holds for any value of $N^{\epsilon} < M < \sqrt N$.

The main goal of this section is to give a proof sketch of \Cref{lem:bone-space-time}.

\subsection{Smaller FFTs for the \texorpdfstring{$\bone$}{one} array}
As in \Cref{sec:small-fft-for-mobius}, we use an analog of \Cref{eq:low-mem-fft-formula-mobius}:

\[
\widetilde{\bone}[\ell_0 + L_0\ell_1] =
\sum_{m_1=0}^{L_1-1}  \parens{ \zeta_L ^ {L_0} } ^ {\ell_1 m_1}\sum_{\substack{n: \kbar(n)\le \kbar(N) \\ \kbar(n) \equiv m_1 (\text{mod } L_1)}} \zeta_L ^ {\ell_0 \kbar(n)}
\]

It follows that:

\begin{equation}\label{eq:low-mem-fft-formula-bone}
\widetilde{\bone}[\ell_0 + L_0\ell_1] =
\sum_{m_1=0}^{L_1-1}  \parens{ \zeta_L ^ {L_0} } ^ {\ell_1 m_1}\sum_{\substack{k\le \kbar(N) \\ k \equiv m_1 (\text{mod } L_1)}} \bone[k] \zeta_L ^ {\ell_0 k},
\end{equation}

where $\bone[k]$ is the number of $n$'s such that $\kbar(n)=k$:

\[
\bone[k] = \ceil*{2^{\Delta (k+1)}} - \ceil*{2^{\Delta k}}
\]

As before, we fix the value of $\ell_0$, and compute the $L_1$ values of $\widetilde{\bone}[\ell]$ for $\ell\equiv \ell_0 \pmod{L_0}$. We have shown that the values of $\widetilde{\bone}[\ell_0+L_0\ell_1]$ are the Fourier transform of order $L_1$ of the array $Z$ defined by:

\begin{equation}\label{eq:bone-fourier-z} Z_{\ell_0}[m_1] = \sum_{\substack{k\le \kbar(N) \\ k \equiv m_1 (\text{mod } L_1)}} \bone[k] \zeta_L ^ {\ell_0 k}
\end{equation}

As in the array that represents the set of prime numbers, we are able to efficiently compute the Fourier transform of $\bone$ using memory proportional to its number of nonzero elements. We have shown that the array prime numbers can be reduced to primes up to $N^{1/t}$. This is not the case for $\bone$, which has $\tilde{O}(\sqrt{N})$ nonzero elements.

For reasons that will be made clear in the subsequent analysis, we will choose

\begin{equation}\label{eq:l1-size}
    L_1= \floor*{\frac{1}{\sqrt \Delta \ln 2}} w
\end{equation}

for some $w\in\NN$. This will also coincide (up to logarithmic factors) with the space limit $M$, hence we have $
L_1=\tilde{\Theta}(M)$ and $w=\tilde{\Theta}(\sqrt \Delta M)$.

Now, since $L_0 L_1$ is the total array size $\kbar(N)=\tilde{\Theta}(1/\Delta)$, and since $L_1=\tilde{\Theta}(M)$, we have:

\begin{equation}\label{eq:l0-size}
    L_0= \tilde{\Theta}\parens{\frac{1}{ M \Delta}}
\end{equation}

\subsection{Perturbed \texorpdfstring{$\bone$}{one}} \label{sec:perturbed-bone}
To reduce the memory consumption of the $\bone$ we use the fact that even though it is not very sparse, its entropy is very low. More specifically, we use the fact that locally $\bone$ is approximately periodic. We next discuss what "approximately" means in this context.

Instead of computing the FFT of $\bone$, we compute the FFT of a perturbed version denoted by $\bone^{\text{pert}}(n)$. We use the fact that slightly changing the values of $\kbar(n)$ introduces a new kind of error, but this error may be corrected using the same method used to correct segmentation errors.

In other words, we replace $\kbar(n)$ with a perturbed version $\kbar^{\text{pert}}(n)$. We require $\kbar^{\text{pert}}(n)$ to be non-decreasing and efficiently computable. In addition, we assume
\[|\kbar^{\text{pert}}(n) - \kbar(n)| = \log^{O(1)}(N). \]

Then, the usual error correction procedure can be used to correct the new kind of errors using $\kbar^{\text{pert}}(n)$ instead of $\kbar(n)$. This gives rise to corresponding definitions of all other related quantities, such as $Z_{\ell_0}^{\text{pert}}[m_1]$.

\subsubsection{Sparse part of \texorpdfstring{$\bone$}{one}}
For $k$ up to some threshold $K_{\text{sparse}}$, we compute the contribution to $Z_{\ell_0}$ directly by \Cref{eq:bone-fourier-z}. This requires $O(1)$ time per nonzero cell $\bone[k]$ for computing $Z_{\ell_0}$ for a given $\ell_0$. As there are $O\parens{2^{\Delta K_{\text{sparse}}}}$ nonzero entries of $\bone[k]$ up to $K_{\text{sparse}}$ and $L_0$ values of $\ell_0$, handling this part of the array requires overall time of

\begin{lemma}[informal]\label{lem:sparse-bone-time}
The time required for handling the sparse part of $\bone$ is
\[
O\parens{2^{\Delta K_{\text{sparse}}}L_0} = \tilde{O}\parens{\frac{2^{\Delta K_{\text{sparse}}}}{\Delta M}}
\]
\end{lemma}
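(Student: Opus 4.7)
The plan is to compute each $Z_{\ell_0}$ array for the sparse-part range $k\le K_{\text{sparse}}$ by a direct-sum evaluation of the restriction of \Cref{eq:bone-fourier-z}, bound the per-$\ell_0$ cost by the number of nonzero cells in that range, and then multiply by the number $L_0$ of outer-loop values of $\ell_0$.

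First, I would observe that $\bone[k]=\ceil*{2^{\Delta(k+1)}}-\ceil*{2^{\Delta k}}$ is nonzero iff the half-open segment $[2^{\Delta k},2^{\Delta(k+1)})$ contains an integer. Since the disjoint segments for $k\le K_{\text{sparse}}$ collectively hold only the integers up to $2^{\Delta(K_{\text{sparse}}+1)}$, and each such integer lies in exactly one segment, the number of nonzero cells in the sparse range is $O(2^{\Delta K_{\text{sparse}}})$. Equivalently, one can iterate over the integers $n\le 2^{\Delta(K_{\text{sparse}}+1)}$ and charge each to cell $\kbar(n)\bmod L_1$ (or $\kbar^{\text{pert}}(n)\bmod L_1$ in the setting of \Cref{sec:perturbed-bone}); the count is the same.

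For each fixed $\ell_0$ I would perform exactly this iteration: maintain the running power $\zeta_L^{\ell_0 k}$ in the NTT field of \Cref{sec:smaller-modulus} (updating it by a single multiplication as $k$ advances) and, whenever $\bone[k]\neq 0$, add $\bone[k]\cdot \zeta_L^{\ell_0 k}$ into $Z_{\ell_0}[k\bmod L_1]$. This costs $O(1)$ word-RAM operations per nonzero $k$, hence $O(2^{\Delta K_{\text{sparse}}})$ per $\ell_0$, while the storage of $Z_{\ell_0}$ itself is $O(L_1)=\tilde O(M)$, within the memory budget.

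Summing over the $L_0$ outer values of $\ell_0$ (the $Z_{\ell_0}$ arrays are independent, each feeding a separate inner FFT of length $L_1$, so they must be rebuilt one at a time in the low-memory regime) gives a total of $O\parens{2^{\Delta K_{\text{sparse}}}\cdot L_0}$. Substituting $L_0=\tilde\Theta\parens{1/(M\Delta)}$ from \Cref{eq:l0-size} yields the stated $\tilde O\parens{2^{\Delta K_{\text{sparse}}}/(\Delta M)}$. The one place I expect to take a little care is justifying the $O(1)$ per-update cost: this relies on the scalars fitting in $O(1)$ words, which is the NTT setup of \Cref{sec:smaller-modulus}; were we forced to use arbitrary-precision complex arithmetic (as in the analytic method), this step would incur additional logarithmic factors and a precision analysis, but that is sidestepped here.
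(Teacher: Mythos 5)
Your overall plan matches the paper's brief justification — direct evaluation of the restriction of \Cref{eq:bone-fourier-z}, $O(1)$ work per nonzero cell, $O(2^{\Delta K_{\text{sparse}}})$ nonzero cells, times $L_0$ values of $\ell_0$, then substitute $L_0 = \tilde{\Theta}(1/(\Delta M))$. However, the way you justify the per-nonzero-cell cost has a concrete slip. You propose to ``maintain the running power $\zeta_L^{\ell_0 k}$ \dots updating it by a single multiplication as $k$ advances''; that is a loop over \emph{every} index $k\le K_{\text{sparse}}$, which costs $O(K_{\text{sparse}})$ per value of $\ell_0$, not $O(2^{\Delta K_{\text{sparse}}})$. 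In the regime of the lemma this is not a logarithmic slack: $2^{\Delta K_{\text{sparse}}}=\sqrt{M/\Delta}$ while $K_{\text{sparse}}=\Theta\bigl(\tfrac{1}{\Delta}\log\tfrac{M}{\Delta}\bigr)$, so the ratio $K_{\text{sparse}}/2^{\Delta K_{\text{sparse}}} = \tilde{\Theta}\bigl(1/\sqrt{M\Delta}\bigr)$ is polynomially large near the boundary $M=\tilde{\Theta}(1/\sqrt\Delta)$ — e.g.\ of order $N^{1/9}$ at $M=N^{2/9}$ — which would give $\tilde{O}(1/(\Delta^2 M))$ for the sparse part and spoil the final $\tilde{O}(1/(\Delta^{3/2}M^{1/2}))$ bound. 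To stay within the claimed time you need to skip the zero cells: iterate over the $O(2^{\Delta K_{\text{sparse}}})$ integers $n$ (as in the other formulation you mention) and update the power by $\zeta_L^{\ell_0(\kbar(n)-\kbar(n-1))}$, computed by fast exponentiation in $O(\log L)$ time per step. That produces $\tilde{O}(2^{\Delta K_{\text{sparse}}})$ per $\ell_0$, which the lemma's $\tilde{O}$ absorbs; but the literal ``$O(1)$ word-RAM operations per nonzero $k$'' from a unit-step running power does not survive the accounting.

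Aside from this point, the rest of your argument — the bound on the number of nonzero cells, the per-$\ell_0$ array of size $L_1=\tilde{O}(M)$ fitting the memory budget, the multiplication by $L_0$, and the remark that the NTT setup avoids a precision analysis — agrees with the paper.
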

where we have used \Cref{eq:l0-size}.

\subsubsection{Dense part of \texorpdfstring{$\bone$}{one}}
Recall that $\bone[k] = \ceil*{2^{\Delta (k+1)}} - \ceil*{2^{\Delta k}}$. Using the fact that $\frac{d}{dk} 2^{\Delta k} = 2^{\Delta k} \Delta \ln 2$, we expect $\bone[k]$ to be positive for $k > \frac{1}{\Delta} \log_2 \frac{1}{\Delta \ln 2}$. We call this the dense part of $\bone$ and denote its starting index by $K_\text{dense}$. Note that $K_\text{dense} = \tilde{\Theta}(\frac{1}{\Delta})$ and that the number $n_\text{dense} = \frac{1}{\Delta \ln 2}$ is mapped to this cell: $\kbar(n_\text{dense}) = K_\text{dense}$.

Instead of computing the contribution of the dense part of $\bone$ to its Fourier transform, we omit this contribution. That is, we actually replace $\bone$ with $\bone^{\text{trunc}}$ that is identical to $\bone$ for indices below $K_{\text{dense}}$, and 0 otherwise. We also denote by $\bone^{\text{dense}}=\bone - \bone^{\text{trunc}}$ the dense part of $\bone$.

This introduces a new kind of error to the result. In order to correct this error, note that we compute convolutions of the form $\bone \Conv \hat{\mu}_{\le N^{1/t}}$ and sum up to $N$. To fix the calculation made with only $\bone^{\text{trunc}}$, we simply need to compute $\bone^{\text{dense}} \Conv \hat{\mu}_{\le N^{1/t}}$, whose entries can be expanded into:

\begin{align*}
(\bone^{\text{dense}} \Conv \hat{\mu}_{\le N^{1/t}})[k] &= \sum_{i=0}^k \bone^{\text{dense}}[i] \cdot \hat{\mu}_{\le N^{1/t}}[k-i] \\ &= \sum_{i=K_{\text{dense}}}^k \bone^{\text{dense}}[i] \cdot \hat{\mu}_{\le N^{1/t}}[k-i]
\end{align*}

This uses the value of $\hat{\mu}_{\le N^{1/t}}$ only at indices up to $\kbar(N)-K_{\text{dense}}$, which corresponds to numbers at most about $N/ n_{\text{dense}} = O(\Delta N)$. Hence, we can compute all values $\mu_{\le N^{1/t}}(n)$ up to this threshold (using, e.g., a segmented sieve as described in \Cref{sec:sieve-small-segments}), and multiply each by the appropriate sum of indices of $\bone^{\text{dense}}$.

However, since we use \Cref{lem:ln-identity} to reduce the space complexity, we actually need to compute a more complicated convolution. For example, we might have a term of the form $\bone\Conv\bone\Conv\hmt\Conv\hmt$. In order to correct values omitted from $\bone$, we replace $\bone$ by $\bone^{\text{trunc}} + \bone^{\text{dense}}$ and expand. The resulting terms that use the $\bone^{\text{dense}}$ array are:
\[ \bone^{\text{dense}} \Conv \parens{(2\cdot \bone^{\text{trunc}} + \bone^{\text{dense}}) \Conv\hmt\Conv\hmt} \]
or more generally $\bone^{\text{dense}} \Conv g$ for some array $g$. We then only need to compute $g$ at indices $\kbar(n)$ that correspond to numbers $n \le N / n_\text{dense}$, and we can proceed to compute the resulting correction array (and hence its sum) in $\tilde{O}(\Delta N)$ time.

Concluding, the error introduced by omitting the dense part of $\bone$ can be corrected in $\tilde{O}(N / n_\text{dense}) = \tilde{O}(\Delta N)$ time. We do not bother with further improvements, as this is exactly the time complexity required for correcting segmentation errors (see \Cref{sec:error-correction}).

\subsubsection{Middle part of \texorpdfstring{$\bone$}{one}}
For $K_{\text{sparse}} < k < K_\text{dense}$ we expect each integer number $n$ to be mapped to a unique value of $\kbar(n) = \floor*{\frac{\log_2 n}{\Delta}}$. For any $n_0$ we know that:
\[
  \frac{\log_2 n}{\Delta} = \frac{\log_2 n_0}{\Delta} + \frac{n - n_0}{\Delta n_0 \ln 2} + O\parens{\frac{(n - n_0)^2}{\Delta n_0^2}} 
\]
for any $n \ge n_0$. Recall that we may perturb $\kbar(n)$ by up to $\log^{O(1)}(N)$, so we may choose to replace $\kbar(n)$ with the linear approximation:
\[
  \kbar^{\text{pert}}(n) = \floor*{\frac{\log_2 n_0}{\Delta} + \frac{n - n_0}{\Delta n_0 \ln 2}}
\]
as long as $n_0 \le n < n_0 (1 + \sqrt{\Delta})$, because in this range the error (before taking the floor) is bounded by $\log^{O(1)}(N)$.

We note that $n = n_0 (1 + \sqrt{\Delta})$ is mapped by $\kbar(n)$ to

\[\floor*{\frac{\log_2 n}{\Delta}} \approx \frac{\log_2 n_0}{\Delta} + \frac{\sqrt{\Delta}}{\Delta \ln 2} = \frac{\log_2 n_0}{\Delta} + \frac{1}{\sqrt{\Delta} \ln 2} \]

The above derivation shows that we can pick any nonzero element in $\bone$ and replace the next $\frac{1}{\sqrt{\Delta} \ln 2}$ indices of this array with values that are derived by a first-order approximation of $\kbar$ at the starting nonzero element $k_0 = \kbar(n_0)$. We denote the interval size by $m = \floor*{\frac{1}{\sqrt{\Delta} \ln 2}}$. The nonzero indices in that interval form an arithmetic progression (or, more precisely, a rounding of an arithmetic progression to integer values) with step $\frac{1}{\Delta n_0 \ln 2}$.

Summarizing, the sequences of indices in which $\bone$ is nonzero in the $\bone$ vector (and hence $\bone$ at these indices is 1) are slightly modified so that they can be partitioned into arithmetic progressions spanning non-intersecting intervals of the array. The $t$-th arithmetic sequence of nonzero indices of $\bone$ is characterized by:

\begin{itemize}
\item $n_{0, t} = \Theta\parens{2^{K_{\text{sparse}} \Delta + t\sqrt \Delta / \ln 2}}$.
\item Starts at index $s_t = \kbar(n_{0, t}) = K_{\text{sparse}} + \frac{t}{\sqrt \Delta \ln 2} + O(1)$.
\item Has step size (difference) of $d_t=\Theta\parens{\frac{1}{\Delta n_{0, t}}}=\Theta\parens{\frac{2^{- K_{\text{sparse}} \Delta}}{\Delta}2^{-t\sqrt \Delta / \ln 2}}$.
\item Has $c_t=\Theta\parens{\frac{1/\sqrt \Delta}{d_t}} = \Theta\parens{ \sqrt \Delta 2^{K_{\text{sparse}} \Delta}2^{t\sqrt \Delta / \ln 2}}$ elements.
\item Recalling \Cref{eq:l1-size}, we will choose $L_1$ and $\Delta$ such that $L_1$ is a multiple of $m = \floor*{\frac{1}{\sqrt{\Delta} \ln 2}}$ (recall $(c_t - 1) d_t + (s_t \text{ mod }m) < m$, but $c_td_t + (s_t \text{ mod }m) \ge m$. This choice will reduce cumbersome technicalities in the following calculations).
\item The number of sequences $t$ is at most
\[t_{\text{max}}\le \frac{k_{\text{max}}}{1/(\sqrt \Delta \ln 2)} - \frac{K_{\text{sparse}}}{1/(\sqrt{\Delta} \ln 2)}\le \frac{\ln 2}{\sqrt{\Delta}} \log_2 \frac{\ln 2}{\Delta}  - \sqrt{\Delta} \ln 2 K_{\text{sparse}}.\]
\end{itemize}

We will now describe how, for each such sequence, we are able to efficiently compute its contribution to $Z_{\ell_0}$ for a fixed $\ell_0 < L_0$. Recall \Cref{eq:bone-fourier-z}:

\[Z_{\ell_0}[m_1] = \sum_{\substack{k\le \kbar(N) \\ k \equiv m_1 (\text{mod } L_1)}} \bone[k] \zeta_L ^ {\ell_0 k}
\]

The contribution of the $t$-th sequence (parameterized as $\floor*{s_t + i d_t}$ for $i=0,\ldots,c_t-1$), denoted by $\text{contrib}_{\ell_0, t}[m_1]$, is:

\[
\text{contrib}_{\ell_0, t}[m_1] = 
\sum_{\substack{0\le i\le c_t-1 \\ \floor*{s_t + i d_t} \equiv m_1 (\text{mod } L_1)}} \zeta_L ^ {\ell_0 \floor*{s_t + i d_t}}
\]

Letting $s_t' = s_t\text{ mod }L_1$, since $(c_t-1) d_t + (s_t'\text{ mod }m) < m$ by assumption, and since $L_1$ is a multiple of $m$, we can rewrite this as 

\begin{equation}\label{eq:contrib-formula}
\text{contrib}_{\ell_0, t}[m_1] = 
\sum_{\substack{0\le i\le c_t-1 \\ \floor*{i d_t + s_t'}=m_1}} \zeta_L ^ {\ell_0 \floor*{s_t + i d_t}}
\end{equation}

For a given $\ell_0$, we can trivially account for the contribution from the $t$-th sequence in $O(c_t)$ time. This will be the preferred method for sequences with small $c_t$, which will be included as indices below $K_{\text{sparse}}$. For large values of $c_t$ we will require a different method that we next describe.

Before proceeding, we make an important observation. We see that in \Cref{eq:contrib-formula} (and the discussion preceding it), each sequence only updates one of the $\frac{L_1}{m}$ contiguous blocks of size $m$ of entries in $Z_{\ell_0}$. Hence, we will separately encode each such block of $m$ entries in $L_0$ by a separate array of size $m$, where each sequence now updates exactly one of the arrays. We will consider contributions to each of these blocks separately, where \Cref{eq:contrib-formula} still holds with $s_t'=s_t\text{ mod }m$ instead of mod $L_1$.

\subsection{Updating an arithmetic progression symbolically}
In order to efficiently aggregate the arithmetic progressions, we encode $Z_{\ell_0}[m_1]$ for a fixed $\ell_0$ and all values of $m_1$ by a polynomial $Z_{\ell_0}(x)$ defined by

\begin{equation}
Z_{\ell_0}(x) = \sum_{m_1=0}^{L_1-1} Z_{\ell_0}[m_1] x^{m_1}
\end{equation}

As observed in the discussion following \Cref{eq:contrib-formula}, contributions are made to one of the $L_1/m$ contiguous blocks of $m$ entries in $Z_{\ell_0}$, and it will be convenient for us to encode in a separate polynomial of degree $m-1$ the contributions to each such block.

Then, defining $\text{contrib}_{\ell_0, t}(x)$ analogously, we have:

\begin{equation}
\text{contrib}_{\ell_0, t}(x) = 
\sum_{i=0}^{c_t-1} \zeta_L ^ {\ell_0 \floor*{s_t + i d_t}} x^{\floor*{i d_t + s_t'}}
\end{equation}

If all the $d_t$'s were integers, we could perform the following manipulation. Working with formal power series modulo $x^{m}$, and since $c_td_t + s_t' \ge m$, we could write 

\[
\text{contrib}_{\ell_0, t}(x) \equiv \sum_{i=0}^{\infty} \zeta_L ^ {\ell_0 (s_t + i d_t)} x^{i d_t + s_t'} \equiv \frac{\zeta_L ^ {\ell_0 s_t}x^{s_t'}}{1-\zeta_L ^ {\ell_0 d_t} x^{d_t}} \pmod {x^{m}}
\]

However, this is not justified when $d_t$ is not an integer -- the treatment with formal power series assumes we are encoding values with integral powers of $x$. Instead, we approximate $d_t$ with a rational fraction.

It is well-known (by Dirichlet's approximation theorem) that it is possible to approximate any real number to within $1/Q^2$ using a denominator bounded by $O(Q)$. Hence, there are integers $u_t, v_t$ such that $\left|\frac{u_t}{v_t} - d_t\right| \le \frac{1}{c_t}$ and $v_t = O(\sqrt{c_t})$.
Replacing $d_t$ with this fraction ensures that the error incurred in any value of the arithmetic sequence $s_t+id_t$ for $0\le i \le c_t-1$ is bounded by 1. Replacing the sequence by $\floor*{s_t + i\frac{u_t}{v_t}}$ then changes $\kbar$ by only $O(1)$, a change we are allowed to impose (see \Cref{sec:perturbed-bone}). It also follows that $u_t \approx d_t v_t = O(d_t \sqrt c_t)$.

Working with power series in $x$ modulo $x^{m}$, we see we can write

\[
\text{contrib}_{\ell_0, t}(x) \equiv \sum_{i=0}^{\infty} \zeta_L ^ {\ell_0 \floor*{s_t + i u_t/v_t}} x^{\floor*{s_t' + i u_t/v_t}}  \pmod {x^{m}}
\]

Note now that the term at index $i+v_t$ is a multiple of the term at index $i$, with the ratio being $x^{u_t} \zeta_L^{\ell_0 u_t}$, hence this is a sum of $v_t$ geometric series with the same ratio. It follows we can write

\[
\text{contrib}_{\ell_0, t}(x) \equiv \frac{p_t (y)}{1-y^{u_t}} \pmod {y^{m}}
\]

where $y=x\zeta_L^{\ell_0}$ and $p_t$ is an appropriate polynomial with degree smaller than $u_t$ having $v_t$ nonzero coefficients.

We are now able to add all contributions from the sequences (that is, from all different values of $t$), in the following way: first, we add together the numerators $p_t (y)$ of all contributions with the same denominator, which can be done in linear time. Partial sums are always kept in the form $\frac{p(y)}{q(y)}$ for polynomials $p,q$ of degree less than $m$ and $\deg p < \deg q$. Adding two such expressions $\frac{p_1(y)}{q_1(y)} + \frac{p_2(y)}{q_2(y)}$ is reduced to computing $p_1q_2+q_2p_2$ and $q_1q_2$, both using FFT-based polynomial multiplication in $\tilde{O}(\deg p_1 + \deg p_2)$ time.

Finally, we arrive at an expression of the form $\frac{p(y)}{q(y)}$, from which we can finish by computing the first $m$ terms of the inverse power series of $q(y)$, the multiplying by $p(y)$, in $\tilde{O}(m)$ time.

Adding all such quotients $\frac{p_t(y)}{1-y^u}$ in a degree-balanced binary-tree structure (until degrees are close to $m$) then requires time proportional, up to logarithmic factors, to the sum of degrees in all polynomials used. Recalling we first merged all terms with the same denominator, for each of the $L_1/m$ blocks of $m$ entries, the required time is

\begin{align*}
\tilde{O}\parens{\sum_{u \le u_{\text{max}}} u} &= \tilde{O}\parens{u_{\text{max}}^2} = \tilde{O}\parens{\max_t (d_t \sqrt {c_t})^2} = \tilde{O}\parens{\frac{1}{\sqrt\Delta} \max_t d_t} \\ &= \tilde{O}\parens{\frac{1}{\sqrt\Delta} \frac{2^{- K_{\text{sparse}} \Delta}}{\Delta}}
\end{align*}

We then multiply this time by $\frac{L_1}{m}$ to account for the number of blocks, and by $L_0$ to account for the different values of $\ell_0$ for which this whole process is repeated, the total time for adding the polynomials is

\begin{lemma}[informal]\label{lem:rat-func-add-time}
The total time required for adding rational functions is given by
\[
\tilde{O}\parens{\frac{2^{- K_{\text{sparse}} \Delta}}{\Delta^2}}
\]
\end{lemma}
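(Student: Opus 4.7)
The plan is to finalize the running time calculation started in the paragraph preceding the lemma, by carefully accounting for (i)~the work required per $(\ell_0, \text{block})$ pair, (ii)~the number of blocks of size $m$ inside each $Z_{\ell_0}$ array, and (iii)~the number of values of $\ell_0$ processed. Most of the heavy lifting is already in place: the per-$(\ell_0,\text{block})$ time has been bounded by $\tilde{O}(u_{\max}^2)$ using the fact that the balanced binary-tree merge of rational functions $p_t(y)/(1-y^{u_t})$ modulo $y^m$ runs in time proportional (up to $\log$ factors) to the sum of the polynomial degrees encountered, together with the observation that merging two fractions $p_1/q_1 + p_2/q_2$ reduces to two FFT polynomial multiplications whose cost is linear (up to logs) in $\deg p_1 + \deg p_2$.

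First I would justify the per-block bound $\tilde{O}(u_{\max}^2)$ explicitly. Grouping the summands $p_t(y)/(1-y^{u_t})$ by their common denominator $1-y^u$ costs $O(u)$ per contribution and $O(u)$ to store the combined numerator, summing to $\tilde{O}(\sum_{u\le u_{\max}} u) = \tilde{O}(u_{\max}^2)$. The subsequent balanced binary tree that merges the distinct-denominator quotients keeps intermediate polynomial degrees bounded by $m$ (we truncate modulo $y^m$), and the total work sums once more to $\tilde{O}(u_{\max}^2)$ by the same degree-accounting argument. Then I would combine the formulae already derived: $u_t = \Theta(d_t v_t) = \tilde{O}(d_t\sqrt{c_t})$, and $d_t c_t = \Theta(1/\sqrt\Delta)$, so $(d_t\sqrt{c_t})^2 = d_t(d_t c_t) = \tilde{O}(d_t/\sqrt\Delta)$. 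The maximum of $d_t$ over $t$ is attained at $t=0$, giving $\max_t d_t = \Theta(2^{-K_{\text{sparse}}\Delta}/\Delta)$, so the per-block time is
\[
\tilde{O}\!\parens{u_{\max}^2} \;=\; \tilde{O}\!\parens{\tfrac{1}{\sqrt{\Delta}}\cdot \tfrac{2^{-K_{\text{sparse}}\Delta}}{\Delta}} \;=\; \tilde{O}\!\parens{\tfrac{2^{-K_{\text{sparse}}\Delta}}{\Delta^{3/2}}}.
\]

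Finally, I multiply by the number of blocks $L_1/m$ and by the number of values of $\ell_0$, which is $L_0$. Using \Cref{eq:l1-size} and \Cref{eq:l0-size}, namely $L_1 = \tilde{\Theta}(M)$, $m = \Theta(1/\sqrt{\Delta})$, and $L_0 = \tilde{\Theta}(1/(M\Delta))$, we get $L_1/m = \tilde{\Theta}(M\sqrt{\Delta})$, so the product is
\[
\tilde{O}\!\parens{\tfrac{2^{-K_{\text{sparse}}\Delta}}{\Delta^{3/2}}} \cdot \tilde{\Theta}(M\sqrt{\Delta}) \cdot \tilde{\Theta}\!\parens{\tfrac{1}{M\Delta}} \;=\; \tilde{O}\!\parens{\tfrac{2^{-K_{\text{sparse}}\Delta}}{\Delta^2}},
\]
which is the claimed bound.

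I expect the main potential obstacle to be the rigorous bookkeeping of the balanced binary-tree merging cost. In particular, one has to argue that, after same-denominator grouping, the degrees of partial sums stay controlled by the truncation $\bmod\ y^m$, so that the aggregate FFT work summed across all tree levels really is $\tilde{O}(u_{\max}^2)$ rather than $\tilde{O}(u_{\max}^2\log m)$ with an extra factor or, worse, a level-by-level blowup; this is handled by the standard ``batch multiplication'' / ``subproduct tree'' analysis, and the hidden logarithmic factors are absorbed by the $\tilde O$ notation. The rest of the argument is pure substitution of the previously established sizes of $L_0$, $L_1$, $m$, $d_t$, and $c_t$.
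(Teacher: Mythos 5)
Your proof follows the paper's derivation step for step: bound the per-block cost of the degree-balanced binary-tree merge by $\tilde{O}(\sum_{u\le u_{\max}} u)=\tilde{O}(u_{\max}^2)$, simplify $u_{\max}^2 = \max_t(d_t\sqrt{c_t})^2 = \tilde{O}(\max_t d_t/\sqrt{\Delta}) = \tilde{O}(2^{-K_{\text{sparse}}\Delta}/\Delta^{3/2})$ using $d_tc_t=\Theta(1/\sqrt{\Delta})$, and multiply by $L_1/m$ blocks and $L_0$ values of $\ell_0$ to get the claimed $\tilde{O}(2^{-K_{\text{sparse}}\Delta}/\Delta^2)$, noting that the $M$-dependence cancels. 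The only minor divergence is your charging of $O(u)$ per contribution for the same-denominator grouping: the paper charges $O(v_t)$ per numerator and books that cost separately in \Cref{lem:numerator-v-summation}, but since the binary-tree merge dominates, this does not change the result.
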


where we have used the fact that $L_0L_1/m = \tilde{O}
(1/\Delta) \cdot \tilde{O}(\sqrt \Delta) = \tilde{O}(1/\sqrt{\Delta})$. This is independent of $M$.

We have yet to account for the time required to sum-up all numerators of $\frac{p_t(y)}{1-y^{u_t}}$ with the same denominator. Each takes $O(v_t)$ time, which brings the total time per value of $\ell_0$ to 

\begin{align*}
    O\parens{\sum_{t} v_t} &= \tilde{O}\parens{\sum_{t} \sqrt{c_t}} = \tilde{O}\parens{ \sum_t \Delta^{1/4} 2^{K_{\text{sparse}} \Delta / 2}2^{t\sqrt \Delta / 2\ln 2}} \\ &= \tilde{O}\parens{\Delta^{1/4} 2^{K_{\text{sparse}} \Delta / 2}\cdot \frac{1}{\sqrt \Delta} \cdot 2^{t_{\text{max}}\sqrt \Delta / 2\ln 2}} \\ &= \tilde{O}\parens{ \frac{1}{\Delta^{3/4}}}
\end{align*}

Since this computation is repeated for every value of $\ell_0$, this is multiplied by $L_0=\tilde{O}\parens{\frac{1}{\Delta M}}$ (see \Cref{eq:l0-size}), giving a total numerator-summation time of

\begin{lemma}[informal]\label{lem:numerator-v-summation}
The total numerator-summation time is given by
\[
\tilde{O}\parens{\frac{1}{\Delta^{7/4} M}}
\]
\end{lemma}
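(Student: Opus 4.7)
The plan is to bound the per-$\ell_0$ cost of summing the numerators $p_t(y)$ of the rational functions $p_t(y)/(1-y^{u_t})$ arising from each arithmetic progression $t$, and then multiply by the number of distinct $\ell_0$ values.

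First, I would invoke the Dirichlet approximation from the preceding construction: each numerator polynomial $p_t(y)$ has at most $v_t = O(\sqrt{c_t})$ nonzero coefficients, so adding $p_t(y)$ into a running sum of numerators sharing the same denominator $1-y^{u_t}$ takes $O(v_t)$ time. Hence the per-$\ell_0$ cost of combining all numerators is $O\parens{\sum_t v_t} = O\parens{\sum_t \sqrt{c_t}}$.

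Next, I would substitute the explicit parameterization $c_t = \Theta\parens{\sqrt{\Delta} \cdot 2^{K_{\text{sparse}} \Delta} \cdot 2^{t\sqrt{\Delta}/\ln 2}}$, recognizing $\sum_t \sqrt{c_t}$ as a geometric series in $t$ with ratio $2^{\sqrt{\Delta}/(2\ln 2)}$. Using $2^x - 1 = \Theta(x)$ for small $x$, the sum equals $\Theta(1/\sqrt{\Delta})$ times its largest term at $t = t_{\text{max}}$; plugging in the bound on $t_{\text{max}}$ stated in the discussion preceding the lemma, the $K_{\text{sparse}}$-dependent exponential factors inside $\sqrt{c_{t_{\text{max}}}}$ cancel against the corresponding factors in the exponential bound on $t_{\text{max}}$, and the sum collapses to $\tilde{O}(1/\Delta^{3/4})$.

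Finally, I would multiply by $L_0 = \tilde{\Theta}(1/(\Delta M))$ from \Cref{eq:l0-size}, since this entire numerator-summation process is repeated independently for each of the $L_0$ values of $\ell_0$. This yields the claimed bound $\tilde{O}\parens{1/(\Delta^{7/4} M)}$. The main technical point is performing the geometric sum carefully so that the $2^{K_{\text{sparse}}\Delta/2}$ factors cancel out, confirming that the bound does not depend on the sparse/middle threshold $K_{\text{sparse}}$; once that cancellation is in hand, the remaining bookkeeping is routine.
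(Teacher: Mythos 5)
Your proposal is correct and follows essentially the same route as the paper: bound the per-$\ell_0$ cost by $\sum_t v_t = O(\sum_t \sqrt{c_t})$, recognize the geometric series with ratio $2^{\sqrt\Delta/(2\ln 2)}$ whose sum is $\Theta(1/\sqrt\Delta)$ times the last term, observe that substituting $t_{\text{max}}$ cancels the $2^{K_{\text{sparse}}\Delta/2}$ factor to leave $\tilde O(1/\Delta^{3/4})$, then multiply by $L_0 = \tilde\Theta(1/(\Delta M))$. Your explicit invocation of $2^x-1=\Theta(x)$ to justify the geometric-series constant is a small clarifying addition to what the paper leaves implicit, but the argument is the same.
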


This is independent of the value of $K_{\text{sparse}}$.

Finally, we mention that the total FFT time on the $Z_{\ell_0}$ arrays is $\tilde{O}(L_0 L_1) = \tilde{O}(1/\Delta)$, which will be dominated by other running times in all cases.

\subsection{Complexity analysis}

Summing up, using \Cref{lem:rat-func-add-time} and \Cref{lem:numerator-v-summation} as the total time required for the FFT computations of the middle part of the $\bone$ array, \Cref{lem:sparse-bone-time} for the sparse part of $\bone$, and \Cref{lem:low-mem-mobius} for the $\widetilde{\hat{\mu}}_{\le N^{1/t}}$ array, the total running time with memory $M$ is given by
\[
\tilde{O}\parens{\frac{1}{\Delta^{7/4} M} + \frac{2^{- K_{\text{sparse}} \Delta}}{\Delta^2} + \frac{2^{\Delta K_{\text{sparse}}}}{\Delta M}}
\]

Optimizing over $K_\text{sparse}$ gives $2^{K_\text{sparse} \Delta} = \sqrt{M/\Delta}$, and the time is
\[
\tilde{O}\parens{\frac{1}{\Delta^{7/4} M} + \frac{1}{\Delta^{3/2} M^{1/2}}}
\]

When optimizing over $\Delta$, we need to account for the fact that $L_1 \ge m = \tilde{\Theta}(1/{\sqrt \Delta})$ requiring the space $M$ to be $\tilde{\Theta}(1/{\sqrt \Delta})$ or larger. In this domain, the resulting complexity is:
\[
\tilde{O}\parens{\frac{1}{\Delta^{3/2} M^{1/2}}}
\]

This informally proves \Cref{lem:bone-space-time}.

We mention that it is possible to obtain a different tradeoff for $M < 1/{\sqrt \Delta}$ down to $M=N^{\epsilon}$. We omit the discussion of this case here.

\end{document}